\newcommand{\FF}{\mathcal F}
\newcommand{\RR}{\mathbb R}
\newcommand{\ZZ}{\mathbb Z}
\newcommand{\TT}{\mathbb T}
\def\jump#1{{[\hspace{-2pt}[#1]\hspace{-2pt}]}}
\def\norm#1{\left\vert \left\vert #1  \right\vert \right\vert }
\def\seminorm#1{ \left\vert #1  \right\vert }
\newcommand{\St}{\mathcal{S}}
\def\tnorm#1{\left\vert \left\vert \left\vert #1  \right\vert \right\vert \right\vert}
\newtheorem{lem}{Lemma}
\newtheorem{prop}{Proposition}
\newtheorem*{prop-non}{Proposition}
\newtheorem{theo}{Theorem}
\newtheorem*{theorem-non}{Theorem}
\title{Long time interface dynamics for gravity Stokes flow}
\author{
	Francisco Gancedo
	\\{\footnotesize Departamento de An\'alisis Matemático \& IMUS}
	\\{\footnotesize Universidad de Sevilla}
	\\{\footnotesize Sevilla, Espa\~na}
	\\{\footnotesize email: {\it fgancedo@us.es}}
	\and 
	Rafael Granero-Belinch\'on
	\\{\footnotesize Departamento de Matem\'aticas, Estad\'istica y Computaci\'on}
	\\{\footnotesize Universidad de Cantabria}
	\\{\footnotesize Santander, Espa\~na}
	\\{\footnotesize email: {\it rafael.granero@unican.es}}
	\and 
	Elena Salguero
	\\{\footnotesize ICMAT \& IMUS}
	\\{\footnotesize Madrid \& Sevilla, Espa\~na}
	\\{\footnotesize email: {\it esalguero@us.es}}
}
\begin{document}
\maketitle 

\begin{abstract}
	We study the dynamics of the interface given by two incompressible viscous fluids in the Stokes regime filling a 2D horizontally periodic strip. The fluids are subject to the gravity force and the density difference induces the dynamics of the interface. We derive the contour dynamics formulation for this problem through a  $x_1$-periodic version of the Stokeslet. Using this new system, we show local-in-time well-posedness when the initial interface is described by a curve with no self-intersections and $C^{1+\gamma}$ Hölder regularity, $0<\gamma<1$. This well-posedness result holds regardless of the Rayleigh-Taylor stability of the physical system. In addition, global-in-time existence and decay to the flat stationary state is proved in the Rayleigh-Taylor stable regime for small initial data. Finally, in the Rayleigh-Taylor unstable regime, we construct a wide family of smooth solutions with exponential in time growth for an arbitrary large interval of existence. Remarkably, the initial data leading to this exponential growth possibly lack any symmetry.
\end{abstract}

{\small
\tableofcontents}
\section{Introduction}
In this paper, we study the long time dynamics of the interface separating two different fluids in a regime with very low Reynolds number. The motion of a two-phase highly viscous flow is a practical problem in many applications ranging from Biology to Physics. Furthermore, this is also a first principles step towards the description of viscous water waves. Indeed, although for most of the applications in coastal engineering, water waves are assumed to be inviscid, as already stated by the celebrated applied mathematician Longuet-Higgins \cite{longuet1992theory} (see \cite{granero2022interfaces} for more details regarding the mathematical modeling of viscous waves):
\begin{quote}
	\emph{For certain applications, however, viscous damping of the waves is important, and it would be highly convenient to have equations and boundary conditions of comparable simplicity as for undamped waves.}
\end{quote}

With this physical phenomena in mind, we consider two incompressible, viscous and immiscible fluids filling the  $2\pi$-periodic strip in the variable $x_1$, $\TT \times \RR$.  
The curve 
$$\Gamma(t)=\{ (z_1(\alpha,t),z_2(\alpha,t)); \quad \alpha \in [-\pi,\pi], \quad z(\alpha+ 2 \pi k,t) = (2 \pi k,0) + z(\alpha,t)  \},$$
is the interface between the fluids in such a way that
$$
\TT\times \RR=\Omega^{+}(t)\cup\Omega^{-}(t)\cup\Gamma(t),\quad \Omega^{+}(t)\cap\Omega^{-}(t)=\phi,\quad \partial\Omega^{\pm}(t)=\Gamma(t).
$$
Then the upper fluid fills the domain $\Omega^{+}(t)$, while the lower fluid lies in $\Omega^{-}(t)$. More precisely, there exists a constant $M$ big enough such that 
$$
\TT \times[M,+\infty)\subset\Omega^+(t)\quad\mbox{and}\quad
\TT \times(-\infty,-M]\subset\Omega^-(t).
$$
At low Reynolds numbers, the viscosity forces dominate the inertial forces acting on the fluid. In the limit case, where inertial forces are neglected, the fluids flow in the Stokes regime. There is a vast literature about the study of the Stokes operator and Stokes-based models. For instance we refer to the pioneer works by Ladyzhenskaya \cite{ladyzhenskaya_mathematical_1969}, Ladyzhenskaya \& Solonnikov \cite{ladyzenskaja_1980} and Abe \& Giga \cite{giga_acta}. In this paper, we consider the two incompressible fluids modeled by Stokes flow, hence the dynamics of the fluids is described by the following system of PDEs:
\begin{subequations}\label{stokes}
	\begin{align}
		-\mu^\pm\Delta u^\pm&=-\nabla p^\pm-g\rho^\pm(0,1)^t,&&\quad x\in\Omega^\pm(t),t\in[0,T],\\
		\nabla\cdot u^\pm&=0,&&\quad x\in\Omega^\pm(t),t\in[0,T],\\
		\jump{-pI + \mu ((\nabla u +(\nabla u)^T)/2)}\cdot(\partial_\alpha z)^\perp&= 0,&&\quad x\in\Gamma(t),t\in[0,T],\\
		\jump{u}&=0,&&\quad x\in\Gamma(t),t\in[0,T],\\
		z_t&= u(z,t), &&\quad  t\in[0,T],\\
		z&= z_0, &&\quad t =0 .
	\end{align}
\end{subequations}
Above we have used the notation
$$
(\partial_\alpha z)^\perp=(-\partial_\alpha z_2,\partial_\alpha z_1),
$$
$$
\jump{F}=F^+(z(\alpha,t),t)-F^-(z(\alpha,t),t),
$$
giving in (\ref{stokes}c) the continuity of the stress tensor and in (\ref{stokes}d) the continuity of the velocity due to the viscosity of the fluids.
In the previous equations, $g$ refers to the acceleration due to gravity while $u, p,\mu$ and $\rho$ denote the velocity field, pressure, dynamic viscosity and density of each fluid, respectively. Then, $z$ describes the internal wave separating both fluids, which moves with the flow. This internal wave is a free boundary and should be recovered from the dynamics of the problem. We assume that these fluids have the same viscosities but different densities, \emph{i.e.} 
$$
\mu^+=\mu^-,\quad \rho^+\neq\rho^-.
$$
In what follows, we assume, without loss of generality, that $\mu=g=1$. 

\begin{figure}[h]\label{fig1}
	\begin{center}
		\begin{tikzpicture}[domain=0:2*pi, scale=0.9]
			\draw[ultra thick, smooth, color=magenta] plot (\x,{1+0.5*cos(\x r)});
			\draw[very thick,<->] (2*pi+0.4,0) node[right] {$x_1$} -- (0,0) -- (0,2.5) node[above] {$x_2$};
			\node[right] at (2.5,1.2) {$\Omega^+(t), \, \, \rho^+$};
			\node[right] at (5,0.8) {$\Omega^-(t), \, \, \rho^-$};
		\end{tikzpicture}
	\end{center}
	\caption{The situation under study.}
\end{figure}
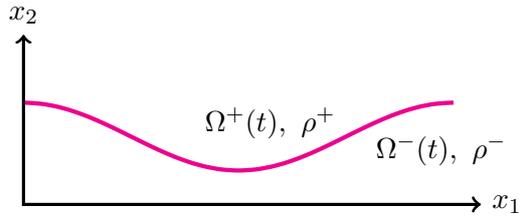

The dynamics of free boundary viscous waves driven by different forces is a classical and challenging problem, where many scenarios are still unsolved. The fluid dynamics can be subjected to the Stokes regime, Navier-Stokes regime (see \cite{guo_almost_2013}) or a water waves regime with viscosity (see \cite{granero-belinchon_well-posedness_2021}), among others. The fluids are typically driven by gravity and capillarity forces, and their dynamics are also subject to the geometry of the problem.

The two-phase Stokes regime driven by capillarity in $\RR^2$, where the interface between the two fluids is described by the graph of a function, was first studied by Badea \& Duchon \cite{badea_capillary_1998} in the contour dynamics formulation approach. In this work, the authors prove a global existence and uniqueness result for small initial data in the space of Fourier transforms of bounded measures. This scenario has also been studied by Matioc \& Prokert \cite{matioc_two-phase_2021,matioc_two-phase_2022}, with equal viscosities and viscosity jump, respectively. The authors prove well-posedness in sub-critical Sobolev spaces with arbitrary size initial data and a criterion for global existence, using the potential theory approach. The linear problem for a graph in the case of a Stokes flow driven by capillarity is
$$f^L_t =- \sigma(-\partial_x^2)^{1/2} f^L,\quad \sigma>0.$$
Taking advantage of the parabolic character of the capillarity problem, they also prove parabolic smoothing of the solution. The same authors have recently proven a similar result for the one-phase problem (i.e., the fluid above behaves like vacuum) as a small viscosity limit \cite{matioc_capillarity_2022}.  	
Moreover, Guo \& Tice \cite{guo2018stability} (see also \cite{zheng2017local}) study the stability of contact lines in fluids. In particular, they consider the dynamics of an incompressible viscous Stokes fluid evolving in a two-dimensional open-top vessel under the influence of gravity and capillary forces. Furthermore, the sedimentation of intertialess particles in a viscous flow in 3D is described by a coupled transport-Stokes system subject to gravity. This model was derived by Höfer \cite{hofer_2018_stokes} and Mecherbet \cite{mecherbet_2019_stokes}. Regarding this problem, Höfer \& Schubert \cite{hofer_2021_stokes} and Mecherbet \cite{mecherbet_2021_droplet}  prove in parallel works the existence and uniqueness of solutions with initial density in $L^1 \cap L^\infty$. Further results concerning the transport-Stokes system problem in 2D, are, for instance, \cite{free_boundary} and recent work by Grayer II \cite{grayer_ii_dynamics_2022}, where the authors show global well-posedness for compactly supported and $L^1 \cap L^\infty$ initial density and persistence of regularity results. Moreover, in the also recent result \cite{leblond}, Leblond proves global well-posedness for bounded initial density in the case of bounded domains and the infinite strip $\Omega =  \RR \times (0,1)$. See also \cite{hofer_2022_filaments}, where Stokes is used to model vortex filaments. A more exhaustive collection of results for the transport-Stokes system can be found in \cite{mecherbet_few_2022}.

In this paper we will use a contour dynamic approach to write the Stokes free boundary problem as a single nonlinear and non-local partial differential equation. The study of nonlinear and non-local PDEs have been a very active research area in Applied Mathematical Analysis in the last years. Two celebrated problems sharing the linear part of the capillary driven Stokes problem are the Peskin and the Muskat problems. Furthermore, both problems can be studied as a nonlinear and nonlocal partial differential equation using a contour dynamics approach similar to the one that we use in this paper. On the one hand, the Peskin problem models the dynamics of an elastic filament immersed in a Stokes fluid \cite{mori_wellposedness_2019,garcia-juarez_peskin_2020,lin2019solvability,chen_peskin_2021}. However, in the problem under consideration in this paper, the gravity driven Stokes problem, the dynamics is induced by the gravity force. On the other hand, the Muskat problem models the dynamics of fluids with different densities flowing in a porous media according to Darcy's law \cite{gancedo_survey_2017,granero2020growth}.

The results concerning the gravity driven free boundary Stokes problem are much more scarce when the fluids fill an infinitely deep domain. We start noting that the Stokes gravity interface dynamics when the internal wave is given as the graph of the function $h(x,t)$ verifies the following energy balance
$$
\frac{(\rho^--\rho^+)}{2}\|h(t)\|_{L^2(\TT)}^2+\int_{0}^t\|\nabla u\|_{L^2(\TT \times \RR)}^2ds=\frac{(\rho^--\rho^+)}{2}\|h(0)\|_{L^2(\TT)}^2.
$$
Furthermore, the Stokes gravity interface dynamics the linear operator reads
$$(-\partial_x^2)^{1/2}f^L_t =- (\rho^--\rho^+) f^L,$$
which is not of parabolic type. Instead, we show that this operator has a damping effect without regularization if the denser fluid is below the lighter one: a situation know as the Rayleigh-Taylor stable scenario. We call this a weak damping effect. This is in heavy contrast to the Peskin problem, the Muskat problem and the capillary driven Stokes problem where the linear operator is given by a square root of the Laplacian as shown before. This striking difference makes the gravity driven Stokes problem a hyperbolic problem for the free boundary. For this hyperbolic non-local problem we establish the local existence for $C^{1,\gamma}$ interfaces and global well-posedness and decay for sufficiently small free boundaries in the RT stable case. We provide two global existence theorems. First, we establish global well-posedness of classical solutions in Sobolev spaces and polynomial decay of the $L^2$ norm towards the flat interface. Secondly, we prove global well-posedness of analytic interfaces in Wiener algebras and exponential decay of the $A^0$ norm towards the flat interface. In the two-phase gravity driven Stokes, our result is, to the best of our knowledge, the only one dealing with an infinitely deep domain with non integrable densities, instead our densities are merely in $L^\infty$ with no decay.

The local well-posedness results extends to the Rayleigh-Taylor unstable regime, establishing the existence of local in time $C^{1,\gamma}$ unstable interfaces. Despite being well-posed, in the final result of this paper, we prove that interfaces in the unstable regime growth exponentially in some Wiener norms. We prove this result by a careful study of the linear semi-group, which shows exponential growth bounds. We can generalize this behavior to the full non-linear problem, showing exponential growth for smooth small data in Wiener spaces in an arbitrary large interval of existence. Previous results in this direction are scarce and sometimes are more restricted. In this direction, we have to mention the previous result \cite{Guo2007}, where Guo, Hallstrom \& Spirn showed instability for smooth interfaces in three different physical settings. Namely, vortex sheets with surface tension, the Muskat problem with surface tension and vortex patches. They show that the dynamics of perturbations of the steady states are characterized by the dynamics of their linear semi-groups in a particular time scale related to the size of the perturbation. We would also like to mention the result by Kiselev \& Li \cite{Kiselev2019}, where the authors show exponential growth of the curvature of an Euler patch boundary under certain symmetry assumptions and with fixed boundary. We have to emphasize that in our result, we do not need any symmetry or time scale restriction, instead we prove the exponential growth for arbitrary time scales and non-symmetric initial data.
	
\section{Main results and methodology}
One of the classical methods to deal with free boundary problems is to exploit potential theory in order to reformulate the problem into a new contour dynamics equation, which will be typically non-local and strongly non-linear. Let us mention that this kind of approach has been extensively and successfully used in other free boundary problems in fluid dynamics to show well-posedness (see \cite{bertozzi_global_1993,chemin_persistance_1993} for the vortex patch, \cite{cordoba_interface_2010} for water waves, \cite{kiselev2017local,gancedo2021local,gancedo_well-posedness_2021} for the SQG sharp-front, \cite{cordoba_interface_2011, gancedo_global_2021}  for the Muskat problem and \cite{garcia-juarez_peskin_2020,chen_peskin_2021} for the Peskin problem). Furthermore, it has been applied to prove singularity formation for the water waves, the SQG sharp-front and the Muskat problem \cite{castro2012rayleigh,castro2013finite,kiselev2016finite,gancedo2021local,castro2013breakdown}. In this work, we derive the contour dynamics formulation for \eqref{stokes} through a new kernel, which we call the $x_1$-periodic Stokeslet. This approach results in the following equivalent contour dynamics equation for the internal wave $z(\alpha,t)$: 
\begin{align} \label{eq:1}
	z_t(\alpha,t) & =  (\rho^- - \rho^+)  \int_{\TT} \St(z(\alpha,t)-z(\beta,t)) \cdot \partial_\beta z^\perp (\beta,t) z_2(\beta,t)  d\beta,
\end{align}
where the so-called $x_1$-periodic Stokeslet reads 
\begin{equation*}
	\St(y) =   \frac{1}{8 \pi } \log \left(2(\cosh (y_2)-\cos( y_1)) \right) \cdot I
	- \frac{y_2}{8 \pi (\cosh (y_2) - \cos (y_1))} \left( \begin{array}{cc}
		-\sinh (y_2) &\sin (y_1) \\
		\sin (y_1) & 	\sinh (y_2)
	\end{array} \right).
\end{equation*}

We take advantage of this contour dynamics formulation \eqref{eq:1} to prove the following local in time existence result for the interface with Hölder regularity $C^{1,\gamma}(\TT)$, in the case when the initial geometry of the interface is a curve $z_0(\alpha)$ satisfying the arc-chord condition (which is measured by the function $\mathcal{F}$ defined in \eqref{arcchord}-\eqref{arcchord0}).
\begin{prop-non}[Local existence of solutions in  $C^{1,\gamma}(\TT)$]
	Let $0<\gamma<1$ be a fixed parameter and $z_0(\alpha) \in C^{1,\gamma}(\TT)$ be the initial data satisfying the arc-chord condition.
	Then, there is a time $0<T$ such that there exists a unique local solution 
	$$
	z \in C^1((-T,T);C^{1,\gamma}(\TT))
	$$ of \eqref{eq:1} satisfying the arc-chord condition on $(-T,T)$.
\end{prop-non}
The proof of this result relies on the Picard Theorem on a suitable Banach space. Note that it is independent of the regime of the densities, being valid in both the stable and unstable case. Further details of the proof are shown in Section \ref{section:localexistence}.

In the particular case when the internal wave is described as the graph of a function $h(\alpha,t)$, i.e.,
$$z(\alpha,t) = (\alpha,h(\alpha,t)),$$
we show that the contour dynamics equation is equivalent to  
\begin{align}
	h_t(\alpha,t) &= \frac{(\rho^- - \rho^+)}{8 \pi} \int_\TT \log (2(\cosh (h(\alpha,t)-h(\beta,t))-\cos( \alpha-\beta))) h(\beta) [ 1+h_\alpha(\alpha) h_\alpha (\beta) ]   d \beta \nonumber\\
	&\quad+  \frac{(\rho^- - \rho^+)}{8 \pi} \int_\TT \frac{h(\beta) (h(\alpha)-h(\beta) )}{\cosh(h(\alpha)-h(\beta)) - \cos (\alpha-\beta)} \left[ (h_\alpha(\alpha)h_\alpha(\beta)-1) \sinh(h(\alpha)-h(\beta))\right] d\beta\nonumber\\
	&\quad+  \frac{(\rho^- - \rho^+)}{8 \pi} \int_\TT \frac{h(\beta) (h(\alpha)-h(\beta) )}{\cosh(h(\alpha)-h(\beta)) - \cos (\alpha-\beta)} \left[(h_\alpha (\alpha) + h_\alpha (\beta) ) \sin (\alpha-\beta)   \right] d\beta. \label{eq:2}
\end{align}
The detailed proofs of these formulations are given in Section \ref{section:contourdynamics}.

In order to prove global in time existence of solutions in Sobolev spaces in the stable regime of the densities, we study the linear semi-group 
$$f^L_t = -(\rho^--\rho^+)(-\partial_x^2)^{-1/2} f^L,$$
  which results from the linearization of \eqref{eq:2} around the equilibrium, and prove a decay result (see Section \ref{section:linearoperator} for further details). The linear semi-group properties together with a priori estimates in $L^2(\TT)$ and $H^3(\TT)$ lead us into one of the main results of this paper, where we show stability for classical solutions with prescribed small initial data in $H^3(\TT)$. Although we expect that the optimal regularity is less than $H^3$, we have chosen this space for the sake of simplicity.

\begin{theorem-non}[Global existence and decay of solutions for small data] 
	Let $h_0\in H^3(\TT)$ be the initial data for \eqref{eq:2}. Assume that  the system is in the RT stable regime, \emph{i.e.}
	\begin{equation*}
		\rho^- - \rho^+ >0,
	\end{equation*}
	and take $\frac{3}{2}<m<2$. There is a $0<\delta = \delta(\rho^- - \rho^+)$ such that if
	$$
	\norm{h_0}_{   H^{3}(\TT)}<\delta,
	$$ 
	there exists a unique global classical solution $h(\alpha,t)$ 
	$$
	h\in C([0,T];H^3),
	$$
	of \eqref{eq:2} for arbitrary $T>0$ such that
	$$
	(1+t)^{m} \norm{h}_{L^2}+ \norm{h}_{{  H}^3}\leq C \norm{h_0}_{H^3},	
	$$	
	for some constant $C >0$.
\end{theorem-non}
Further details of the proof are shown in Section \ref{section:globalexistence}.

Moreover, we prove global well-posedness in Wiener spaces of analytic functions $$A^1_{\nu}=\{u\in L^2(\mathbb{T})\text{ s.t. }\|u\|_{A^1_{\nu}}=\sum_{k=-\infty}^\infty e^{\nu |k|}|k|^{s}|\hat{u}(k)|<\infty\}$$
by a careful analysis of equation \eqref{eq:2} applying Fourier techniques and taking advantage of the algebra structure of Wiener spaces. This result is stated in the following theorem:
\begin{theorem-non}[Global existence of solutions in Wiener algebras for small data] 
	Let $h_0\in {A}_{\nu_0}^1(\TT)$ be the initial data for \eqref{eq:2} in the RT stable case,
	$$
\rho^- - \rho^+ >0.	
	$$
Assume that 
	\begin{equation*}
	\nu_0 >0.
	\end{equation*}
	There is a $0<\delta = \delta(\rho^- - \rho^+,\nu_0)$ such that if
	$$
	\norm{h_0}_{  {A}_{\nu_0}^1}<\delta,
	$$ 
	so that 
	$$ \nu_0 - \mathcal{M}(\norm{h_0}_{A_{\nu_0}^1}) >0,$$
	for a suitable non negative function $\mathcal{M}(x) \approx x + O(x^2)$,
	there exists a unique global analytic solution $h(\alpha,t)$ 
	$$
	h\in  C([0,T];{A}_{\nu(t)}^1(\TT)),
	$$
	of \eqref{eq:2}  for arbitrary $T>0$ satisfying
	$$
	\norm{h}_{  {A}_{\nu(t)}^1        }\leq \norm{h_0}_{A^1_{\nu_0}}. 	
	$$	
\end{theorem-non}
The details of the proof are shown in Section \ref{proof1}. This result, together with a careful study of the linear semi-group growth and the ideas of the proof of the global existence result in Sobolev spaces, allow us to prove exponential decay of solutions in Wiener algebras. The statement reads as follows:
\begin{theorem-non}[Global existence and exponential decay of solutions for small data] 
	Let $h_0\in A^1_{\nu_0}(\TT)$ be the initial data for \eqref{eq:2} in the RT stable case,
	$$
\rho^- - \rho^+ >0,	
	$$ fulfilling the hypotheses of Theorem \ref{globalwiener}. In particular,
	$$ \norm{h_0}_{A^1_{\nu_0}} < \delta,$$
	for a suitable $\delta =\delta(\rho^- - \rho^+,\nu_0)$. Assume that 
	\begin{equation*}
		0<\nu^\ast\leq \frac{\nu_0}{24}.
	\end{equation*}
	There is a $0< \varepsilon =  \varepsilon(\nu^\ast, \rho^- - \rho^+,\norm{h_0}_{A^1_{\nu_0}}  )$ such that if
	$$
	\norm{h_0}_{ {A}^0_{\nu^\ast}  }<\varepsilon,
	$$ 
	there exists a unique global analytic solution $h(\alpha,t)$ 
	$$
	h\in C([0,T]; {A}^0_{\nu^\ast} (\TT)  ),
	$$
	of \eqref{eq:2} for arbitrary $T>0$ satisfying
	$$
	e^{\sqrt{\frac{\rho^- - \rho^+}{4} \nu^\ast t}}\norm{h}_{{A}^0}+ \norm{h}_{ {A}^0_{\nu^\ast}   }\leq C \norm{h_0}_{A^0_{\nu^\ast}},
	$$	
	for some constant $C>0$.
\end{theorem-non}
The proof is given in Section \ref{proof2}.

Finally, using the previous result, we can prove that smooth initial data in the RT unstable scenario can lead to exponential growth in a particular Wiener norm. 
\begin{theorem-non}[Exponential growth of solutions in the RT unstable case for small data] 
	Let $T>0$ be an arbitrary fixed parameter. Then, it exists a family of smooth initial data 
	$$ g_0 \in A^0_{\nu\ast} (\TT),  $$
	such that 
	$$ g \in C([0,T]; A^0_{\nu\ast} (\TT) ),  $$
	is a solution of \eqref{eq:2} in the RT unstable regime,
	$$
	\rho^--\rho^+<0,
	$$	
and 
	\begin{equation} 
		\norm{g(\tau)}_{A^0_{\nu^\ast}}  \ge \frac{1}{C}	e^{\sqrt{\frac{|\rho^+-\rho^-|}{4}\nu^\ast \tau}}	\norm{g_0}_{{A}^0}, \quad \tau \in [0,T].
	\end{equation}
\end{theorem-non}

\section{The contour dynamics formulation in $\TT \times \RR$} \label{section:contourdynamics}
In this section we are going to find that system \eqref{stokes} is equivalent to a single non-local and nonlinear equation for the internal wave $z(\alpha,t)$. This new equation is called the contour dynamics formulation of \eqref{stokes}. Although such formulation for the Stokes problem has been used before \cite{badea_capillary_1998} in the case of $\mathbb{R}^2$, the fact that we are considering periodic waves makes the form of the Stokeslet different and its computation far from being trivial. Thus, as the closed form of the periodic stream function and the periodic Stokeslet for a given generic force $F$ can be of its own interest, we include some of the details in the next section. Then, we restrict us to the case where the force $F$ reduces to the gravity force to further simplify the integral kernels. 

\subsection{The stream function}
Let us consider a generic force $F$. The purpose of this section is to find a representation formula for the stream function $\psi$, \emph{i.e.} the function such that
\begin{equation}
u(x,t)=\nabla^\bot\psi(x,t)=(-\partial_{x_2} \psi(x,t),\partial_{x_1} \psi(x,t)),
\end{equation}
where $u$ is the velocity field given by \eqref{stokes}.
We have 
$$
\nabla^\bot\cdot u(x,t)=\Delta \psi(x,t).
$$
Let us consider the Stokes equations written as
\begin{align*}
\Delta u-\nabla p&=F,\quad x\in \TT \times \mathbb{R}\\
\nabla\cdot u&=0,\quad x\in \TT\times \mathbb{R}.
\end{align*}
Using the stream function, we find that 
\[
\Delta^2 \psi = \nabla^\perp \cdot F(x).
\]
Then, in order to solve the Stokes system of PDEs, we have to first solve the bilaplacian in the strip $\TT \times \RR$. This latter problem will be solved by finding the Green's function associated to this fourth order elliptic problem $K$. Namely, we will prove the existence of the stream function by finding 
\begin{equation}
\psi(x) = \int_{\TT \times \RR} K(x-y) \nabla^\perp \cdot F(y) dy,
\end{equation} 
where $K$ is any function satisfying
$$
\Delta^2 K=\delta,\text{ $x$ in $\mathbb{T}\times\mathbb{R}$}.
$$
As this Green function can be useful in many different problems in viscous fluid dynamics, we give its precise expression and the proof of its construction in the next proposition.
\begin{prop}[Green function of the $x_1$-periodic bilaplacian]
Define
	\begin{equation}
	K(x) = \sum_{n \in \ZZ} \beta_n(x_2) e^{in x_1} .
	\end{equation}
	where 
	\begin{align}\label{beta0}
	\beta_0(x_2) &=  \frac{ |x_2|x_2^2}{24\pi}. 	
	\\\label{betan}
	\beta_n(x_2) & = \frac{(|nx_2|+1)e^{-|n x_2|}}{8\pi n^2|n|} \text{ for }n \neq 0.
	\end{align}
Then $K$ verifies
$$
\Delta^2K=\delta,\text{ $x$ in $\mathbb{T}\times\mathbb{R}$}.
$$
\end{prop}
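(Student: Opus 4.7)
The plan is to exploit the $x_1$-periodicity by expanding both sides of $\Delta^2 K=\delta$ in Fourier series along $x_1$, thereby reducing the two-dimensional PDE to a family of one-dimensional ODEs for the modes $\beta_n(x_2)$. Using the Poisson-type identity $\delta(x_1,x_2)=\frac{1}{2\pi}\sum_{n\in\ZZ} e^{inx_1}\delta(x_2)$ on $\TT\times\RR$, and noting that $\Delta=\partial_{x_2}^2-n^2$ acting on $e^{inx_1}\beta_n(x_2)$ multiplies by $e^{inx_1}$, the problem is equivalent to showing that for every $n\in\ZZ$,
\begin{equation*}
  (\partial_{x_2}^2-n^2)^2\beta_n(x_2)=\frac{1}{2\pi}\,\delta(x_2)
\end{equation*}
in the distributional sense. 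Once this is established, a straightforward convergence argument in tempered distributions (using $|\beta_n(x_2)|\lesssim |n|^{-3}$ uniformly in $x_2$ for $n\neq 0$) lets one reassemble the Fourier series.

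For the modes $n\neq 0$, I would first verify that $\beta_n$ solves the homogeneous equation away from $x_2=0$. Since the characteristic roots of $(\partial_{x_2}^2-n^2)^2$ are $\pm|n|$ each with multiplicity two, the decaying homogeneous solutions on each half-line are precisely linear combinations of $e^{-|nx_2|}$ and $|x_2|e^{-|nx_2|}$, and the ansatz $\beta_n\propto (1+|nx_2|)e^{-|nx_2|}$ fits this template. A direct (but short) differentiation on $\{x_2>0\}$ yields $\beta_n''(x_2)=\tfrac{(|n|x_2-1)e^{-|n|x_2}}{8\pi|n|}$ and $\beta_n'''(x_2)=\tfrac{(2-|n|x_2)e^{-|n|x_2}}{8\pi}$, and substitution into $\beta_n''''-2n^2\beta_n''+n^4\beta_n$ makes the bracket telescope to zero. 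Next, I would read off the jumps at $x_2=0$: since $\beta_n$ depends only on $|x_2|$, it is an even function, hence $\beta_n$, $\beta_n'$ and $\beta_n''$ are continuous at $0$ while $\beta_n'''$ has jump $\tfrac{1}{4\pi}-(-\tfrac{1}{4\pi})=\tfrac{1}{2\pi}$. Therefore the only distributional contribution to $\beta_n''''$ at $0$ is $\tfrac{1}{2\pi}\delta(x_2)$, which is exactly the desired right-hand side.

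The $n=0$ case is handled directly: $\beta_0(x_2)=|x_2|^3/(24\pi)$ satisfies $\beta_0'''(x_2)=\mathrm{sgn}(x_2)/(4\pi)$, whose distributional derivative is $\tfrac{1}{2\pi}\delta(x_2)=\Delta^2\beta_0$ since $\Delta^2$ collapses to $\partial_{x_2}^4$ on the zero mode. Assembling the modes back,
\begin{equation*}
  \Delta^2 K=\sum_{n\in\ZZ}e^{inx_1}(\partial_{x_2}^2-n^2)^2\beta_n(x_2)=\frac{\delta(x_2)}{2\pi}\sum_{n\in\ZZ}e^{inx_1}=\delta(x_1)\delta(x_2),
\end{equation*}
which closes the proof. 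The only delicate step is the distributional bookkeeping of the jumps at $x_2=0$, but thanks to the parity of $\beta_n$ in $x_2$ all even-order derivatives remain continuous so only the jump of $\beta_n'''$ contributes; everything else is a routine algebraic verification and a straightforward distributional limit.
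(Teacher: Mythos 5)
Your proof is correct, and it follows the same reduction as the paper (Fourier series in $x_1$, turning $\Delta^2 K=\delta$ into the family of one-dimensional ODEs $\partial_{x_2}^4\beta_n-2n^2\partial_{x_2}^2\beta_n+n^4\beta_n=\tfrac{1}{2\pi}\delta(x_2)$), but you treat the mode ODE differently. The paper derives the formula for $\beta_n$ ($n\neq 0$) by taking the Fourier transform in $x_2$, obtaining $\hat\beta_n(\xi)=\tfrac{1}{2\pi}(\xi^2+n^2)^{-2}$, and then inverting (the inversion itself is only sketched there: ``Solving the above integral it is possible to get...''). You instead verify the stated $\beta_n$ directly: away from $x_2=0$ you check that the characteristic operator $(\partial_{x_2}^2-n^2)^2$ annihilates $(1+|n|\,|x_2|)e^{-|n|\,|x_2|}$, and at $x_2=0$ you use parity to see that $\beta_n,\beta_n',\beta_n''$ are continuous while $\beta_n'''$ jumps by $\tfrac{1}{2\pi}$, producing exactly the $\tfrac{1}{2\pi}\delta(x_2)$ on the right-hand side. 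Both approaches are sound; the paper's Fourier-transform route explains where the ansatz comes from, whereas your jump-condition argument is a cleaner, self-contained verification that avoids the (unwritten) contour integral computation. The $n=0$ case and the reassembly of the series via the decay $|\beta_n|\lesssim|n|^{-3}$ are handled the same way in both.
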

\begin{proof}
Above equation can be expressed as the product of two deltas	
\begin{equation}
\Delta^2 K(x) = \delta(x_2)\delta(x_1),\quad x_1\in\TT,\quad x_2\in\RR.
\end{equation}
The delta function in the periodic setting is given by  
\[
\delta(x_1) = \frac{1}{2 \pi} \sum_{n \in \ZZ} e^{in x_1},
\]
if it is understood in the distributional sense. Then, it yields 
\begin{equation} \label{bilap1}
\Delta^2 K(x) = \frac{\delta(x_2)}{2 \pi} \sum_{n \in \ZZ} e^{in x_1}.
\end{equation}
On the other hand, $K$ must be a $2\pi-$periodic function in $x_1$ so that we find $K(x)$ given by
\begin{equation} \label{bilap2}
K(x) = \sum_{n \in \ZZ} \beta_n(x_2) e^{in x_1} .
\end{equation}
Comparing \eqref{bilap1} and \eqref{bilap2}, we get an ODE for $\beta_n(x_2)$
\begin{equation}
\frac{\delta(x_2)}{2\pi} = \partial_{x_2}^4\beta_n(x_2)-2n^2 \partial_{x_2}^2\beta_n(x_2) + n^4 \beta_n(x_2).
\end{equation}
The case $n=0$ can be solve in a direct manner getting \eqref{beta0}. The case $n\neq 0$ can be solve by applying the Fourier Transform in the $x_2$ variable
\[
\hat f(\xi) = \int_{\RR} f(x_2) e^{-i x_2 \xi}dx_2.
\]
Then we find that
\begin{equation}
\frac{1}{ 2\pi} =\xi^4 \hat\beta_n(\xi) +2n^2 \xi^2 \hat\beta_n(\xi) + n^4 \hat\beta_n(\xi).
\end{equation}
We apply the inverse Fourier Transform to obtain
\begin{equation}
\beta_n(x_2) =  \frac{1}{(2 \pi)^2}\int_{\RR} \frac{1}{\xi^4 +2n^2 \xi^2 + n^4 } e^{i \xi x_2} d \xi
\end{equation}
Solving the above integral it is possible to get \eqref{betan}. This concludes the proof of the proposition.
\end{proof}
Let us further simplify the expression for the stream function. Integrating by parts,
\begin{align*}
\psi(x) &= \int_{\TT \times \RR} K(x-y) (-\partial_{y_2} F_1(y) + \partial_{y_1}F_2(y)) dy \\
& = \int_{\TT \times \RR}- \partial_{x_2} K(x-y) F_1(y) +   \partial_{x_1} K(x-y) F_2(y) dy \\
 &= \int_{\TT \times \RR} \Psi(x-y) \cdot F(y) dy,
\end{align*}
with
$$
\Psi=(- \partial_{x_2}K,\partial_{x_1} K),
$$
with
\begin{align*}
\partial_{x_1} K(x) &= \sum_{n \in \ZZ} i n \beta_n(x_2) e^{inx_1}\\
& = \frac{1}{8 \pi}\sum_{n \in \ZZ, n \neq 0} \left( \frac{i|x_2|}{n}e^{-|nx_2|}+ \frac{i}{n |n|}e^{-|nx_2|}\right) e^{in x_1} \\
& = - \frac{1}{4 \pi} \sum_{n > 0} \left( \frac{n|x_2|}{n^2}e^{-n|x_2|}+ \frac{1}{n^2}e^{-n|x_2|} \right) \sin(n x_1),
\end{align*}
and
\begin{align*}
 \partial_{x_2} K(x) &= \sum_{n \in \ZZ} \partial_{x_2}\beta_n(x_2) e^{inx_1}\\
& = \frac{x_2|x_2|}{8 \pi} -\frac{x_2}{8 \pi} \sum_{n \in \ZZ, n \neq 0} \frac{1}{|n|} e^{-|nx_2|} e^{in x_1} \\
 & =  \frac{x_2|x_2|}{8 \pi} -\frac{1}{4 \pi}   \sum_{n>0} nx_2\cos(nx_1)\frac{e^{-n |x_2|}}{n^2}.
 \end{align*}
We observe that, due to the Fourier series of the kernel $K$, we have that $\Psi$ is, at least, a function whose Fourier series is absolutely convergent and, as a consequence a $C(\mathbb{T}\times\mathbb{R})$ function. Therefore we can ensure the existence of the continuous function $\Psi$.

\subsection{The $x_1$-periodic Stokeslet}
The aim of this section is to use the representation formula for the stream function in order to obtain a representation formula for the velocity $u$. We recall that
\begin{align*}
\psi(x) &= \int_{\TT \times \RR} \Psi(x-y) \cdot F(y) dy,
\end{align*}
with
$$
\Psi=(- \partial_{x_2} K,\partial_{x_1} K),
$$
and $F$ is a generic force. As a consequence,
\begin{align*}
u(x)&= \nabla^\perp\psi(x)\\
 &= \int_{\TT \times \RR} \St(x-y) \cdot F(y) dy,
\end{align*}
where the $x_1$-periodic kernel is
$$
\St=\left(\begin{array}{cc}\partial_{x_2}^2 K &-\partial_{x_2}\partial_{ x_1} K\\ -\partial_{x_2}\partial_{ x_1} K & \partial^2_{ x_1} K\end{array}\right),
$$
with
\begin{align*}
\partial_{x_1}^2 K(x) &= - \frac{1}{4 \pi} \sum_{n > 0} \left( |x_2|e^{-n|x_2|}+ \frac{1}{n}e^{-n|x_2|} \right) \cos(n x_1),
\end{align*}
\begin{align*}
\partial_{x_1}\partial_{x_2} K(x) & =   \frac{1}{4 \pi}   \sum_{n>0} x_2\sin(nx_1)e^{-n |x_2|},
 \end{align*}
and
 \begin{align*}
\partial_{x_2}^2 K(x) & =  \frac{|x_2|}{4 \pi} -\frac{1}{4 \pi}   \sum_{n>0} \left(\frac{e^{-n |x_2|}}{n}- |x_2|e^{-n |x_2|}\right)\cos(nx_1).
 \end{align*}
We observe that, even if the kernel $\Psi$ was not explicit, we are able to sum up the Fourier series corresponding to the Stokeslet to find a closed form expression for the kernel $\St$. We compute
\begin{align*}
\sum_{n>0}  \sin(nx_1)e^{-n |x_2|} & =\sum_{n>0} \frac{e^{n(ix_1-|x_2|)}-e^{n(-ix_1-|x_2|)}}{2i}\\
&=   \frac{1}{2i}\Big( \frac{e^{ix_1-|x_2|}  }{1-e^{ix_1-|x_2|}} -\frac{e^{-ix_1-|x_2|}  }{1-e^{-ix_1-|x_2|}} \Big) \\
& =  \frac{1}{2}\frac{\sin (x_1)}{\cosh (x_2)-\cos (x_1)}=  \frac{1}{2}\partial_{x_1}\log(\cosh (x_2)-\cos (x_1)).
\end{align*}
As a consequence
\begin{equation*}
\sum_{n>0}  \cos(nx_1)\frac{e^{-n |x_2|}}{n}  = C(x_2) -\frac{1}{2} \log(\cosh (x_2)-\cos (x_1)).
\end{equation*} 
Similarly,
\begin{align*}
\sum_{n>0}  &\cos(nx_1)e^{-n |x_2|}|x_2| = \sum_{n>0} \frac{e^{n(ix_1-|x_2|)}+e^{n(-ix_1-|x_2|)}}{2}|x_2|\\
&= \frac{1}{2}\left( \frac{e^{ix_1-|x_2|}  }{1-e^{ix_1-|x_2|}} +\frac{e^{-ix_1-|x_2|}  }{1-e^{-ix_1-|x_2|}} \right)|x_2|= \frac{1}{2}  \frac{\cos (x_1) - e^{-|x_2|} }{\cosh(x_2)-\cos (x_1)} |x_2|\\
& =  \frac{1}{2}  \frac{\cos (x_1)-\cosh(x_2)+\cosh(x_2) - e^{-|x_2|} }{\cosh(x_2)-\cos (x_1)}|x_2|=- \frac{1}{2}|x_2|+\frac{1}{2}  \frac{\sinh(|x_2|) }{\cosh(x_2)-\cos (x_1)}|x_2| \\
& = -\frac{1}{2}|x_2|+\frac{1}{2}  \frac{x_2\sinh(x_2) }{\cosh(x_2)-\cos (x_1)}.
\end{align*} 
Thus, we find that
\begin{align*}
\partial_{x_2} \sum_{n>0}  \cos(nx_1)\frac{e^{-n |x_2|}}{n} & = \frac{1}{2}\frac{x_2}{|x_2|}-\frac{1}{2}  \frac{\sinh(x_2) }{\cosh(x_2)-\cos (x_1)}\\
&= C'(x_2) -\frac{1}{2} \frac{\sinh(x_2)}{\cosh (x_2)-\cos (x_1)},
\end{align*} 
from where
$$
\sum_{n>0}  \cos(nx_1)\frac{e^{-n |x_2|}}{n}  =\frac{|x_2|}{2} -\frac{1}{2} \log(\cosh (x_2)-\cos (x_1))+c.
$$
Choosing $x_2=0$ and $x_1=\pi$, we find
$$
\sum_{n>0}  \cos(nx_1)\frac{e^{-n |x_2|}}{n} =\frac{|x_2|}{2} -\frac{1}{2} \log(\cosh (x_2)-\cos (x_1))-\frac{\log(2)}{2}.
$$
The previous computations lead to
\begin{align*}
 \partial_{y_1} \partial_{y_2} K(y) &=  \frac{y_2}{8 \pi} \frac{\sin (y_1)}{\cosh (y_2) - \cos (y_1)},\\
 \partial_{y_1}^2  K(y) & = - \frac{y_2}{8\pi} \frac{\sinh (y_2)}{\cosh( y_2)-\cos (y_1)} + \frac{1}{8 \pi} \log (2(\cosh (y_2)-\cos( y_1))),\\
 \partial_{y_2}^2 K(y) & =  \frac{1}{8 \pi} \log (2(\cosh (y_2)-\cos( y_1)))  + \frac{y_2}{8 \pi} \frac{\sinh (y_2)}{\cosh (y_2)-\cos (y_1)}.
\end{align*}
Collecting all the terms we can write
\[
\St(y) =   \frac{1}{8 \pi } \log \left( 2(\cosh (y_2)-\cos( y_1)) \right)\cdot I  - \frac{y_2}{8 \pi (\cosh (y_2) - \cos (y_1))} \left( \begin{array}{cc}
 -\sinh (y_2) & \sin (y_1) \\
\sin (y_1) & 	\sinh (y_2)
\end{array} \right).
\]
Similarly, we can compute the pressure using the Green function for the Poisson equation in $\TT \times \RR$, namely
\[
\mathcal{P}(x) =  -\frac{1}{4 \pi} \log \left( \cosh (x_2) - \cos (x_1)    \right).
\]
which solves
\[
-\Delta \mathcal{P} = \delta.
\]
Then, we find that
\begin{align*}
p(x) & =- \frac{1}{4 \pi} \int_{\TT \times \RR} \log \left( \cosh (x_2-y_2) - \cos (x_1-y_1)    \right) \nabla\cdot F(y) dy\\
& =  \frac{1}{4 \pi} \int_{\TT \times \RR} \frac{-\sin (x_1-y_1) F_1(y)-\sinh (x_2-y_2) F_2(y)}{ \cosh (x_2-y_2) - \cos (x_1-y_1) } dy    \\
& =  -\frac{1}{4 \pi} \int_{\TT \times \RR} \frac{(\sin (y_1),\sinh(y_2) ) \cdot  F(x-y)}{ \cosh (y_2) - \cos (y_1) } dy.
\end{align*}
\subsection{The equation for the free boundary: the case of an arbitrary curve}
In this section we are going to focus on the case where the force acting on the fluid $F$ is the gravity force in order to obtain the contour dynamics formulation for the free surface under consideration. Indeed, in our case of two homogeneous fluids separated by an internal wave under the action of gravity, we can write the acting force as 
$$
F(x,t)=g\rho(x,t) (0,1)=\nabla (\rho(x,t) x_2)\quad \mbox{for}\quad x\in\Omega^{\pm}(t),
$$ 
with
$$
\rho(x,t)=\left\{\begin{array}{cc}\rho^+ & \text{ for $x$ in $\Omega^+(t)$} \\ \rho^- & \text{ for $x$ in $\Omega^-(t)$} \end{array}\right..
$$
We can use this specific structure to further simplify the representation formulas in the situation under study. Let us start with the case of the stream function. We integrate by parts in order to find that
\begin{align*}
\psi(x) &= \int_{\TT \times \RR} \Psi(x-y) \cdot \nabla (\rho(y) y_2) dy\\
&=\int_{\Omega^+} \Psi(x-y) \cdot \nabla_y (\rho^+ y_2) dy+\int_{\Omega^-} \Psi(x-y) \cdot \nabla_y (\rho^- y_2) dy\\
&=-\int_{\Omega^+} \nabla_y\cdot\Psi(x-y) \rho^+ y_2 dy-\int_{\Omega^-} \nabla_y\cdot\Psi(x-y) \cdot \rho^- y_2 dy,\\
&\quad-\rho^+\int_{\TT} \Psi(x-z(\beta))\cdot (\partial_\beta z(\beta))^\perp  z_2(\beta) d\beta+\rho^-\int_{\TT} \Psi(x-z(\beta))\cdot (\partial_\beta z(\beta))^\perp  z_2(\beta) d\beta\\
& = (\rho^- - \rho^+)\int_{\TT} \Psi(x-z(\beta))\cdot (\partial_\beta z(\beta))^\perp z_2(\beta) d \beta.
\end{align*}
The previous computation implies that the velocity solves
\begin{align*}
u(x) & = (\rho^- - \rho^+)\int_{\TT} \St(x-z(\beta))\cdot z_\beta^\perp(\beta) z_2(\beta) d \beta,
\end{align*}
with
\begin{equation}\label{kernel}
\St(y) =   \frac{1}{8 \pi } \log \left(2(\cosh (y_2)-\cos( y_1)) \right) \cdot I
  - \frac{y_2}{8 \pi (\cosh (y_2) - \cos (y_1))} \left( \begin{array}{cc}
-\sinh (y_2) &\sin (y_1) \\
\sin (y_1) & 	\sinh (y_2)
\end{array} \right).
\end{equation}
Since the velocity is continuous across the interface we find the contour equation for the case of an arbitrary periodic curve
\begin{align}\label{eq:curvaarbitraria}
z_t(\alpha,t) & =  (\rho^- - \rho^+)  \int_{\TT} \St(z(\alpha,t)-z(\beta,t)) \cdot \partial_\beta z^\perp (\beta,t) z_2(\beta,t)  d\beta.
\end{align}

\subsection{The equation for the free boundary: the case of a graph}
In the previous section we have found the contour equation for the internal wave separating both fluids. However, such a free boundary is not necessarily parameterized as the graph of a function. In fact, the graph parametrization is not maintained by the contour equation \eqref{eq:curvaarbitraria}. In this section we are going to find the contour equation for the case where the free boundary is parameterized as a graph. In order to do that we start with an initial data given by the graph of certain function and we evolve such initial data with a \emph{modified contour dynamics equation}. Due to the smoothness of the evolution, the solution of this modified contour dynamics equation is a graph and, besides this, it is parameterized as a graph. Finally, we show that, due to the definition of our modified evolution, a reparametrization of the graph solves the contour dynamics equation \eqref{eq:curvaarbitraria} and as a consequence, the evolving graphs are equivalent to a solution of \eqref{eq:curvaarbitraria} after a reparametrization.

Suppose $z(\alpha,t)$ is a solution to the following \emph{modified contour dynamics equation}
\begin{equation}
z_t(\alpha,t) = (\rho^- - \rho^+)  \int_{\TT} \St(z(\alpha,t)-z(\beta,t)) \cdot \partial_\beta z^\perp (\beta,t) z_2(\beta,t)  d\beta+ \lambda(\alpha,t) \partial_\alpha z(\alpha,t),
\end{equation}
where $\lambda$ will be defined below. Assuming that the curve $z$ is initially a graph, i.e., 
$$
z_1(\alpha,0) = \alpha 
$$ 
then, at least for a short time $t$, we have $\partial_\alpha z_1(\alpha,t') \neq 0$ for every $t' \in [0,t]$. This way, taking
\begin{equation*}
\lambda (\alpha,t) =-(\rho^- - \rho^+) \left( \int_{\TT} \St(z(\alpha,t)-z(\beta,t)) \cdot \partial_\beta z^\perp (\beta,t) z_2(\beta,t)  d\beta \right)\cdot (1,0)(\partial_\alpha z_1(\alpha,t))^{-1},
\end{equation*}
we can reparametrize the curve $z$ as the graph of certain function $h(\alpha,t)$.

With this choice of $\lambda$, the equation for the graph becomes 
\begin{align}\label{eq:grafo}
 h_t(\alpha,t) &=  (\rho^- - \rho^+)  \int_{\TT} \St((\alpha-\beta,h(\alpha,t)-h(\beta,t))) \cdot (-\partial_\alpha h(\beta,t) ,1) h(\beta,t)  d\beta \cdot (-\partial_\alpha h(\alpha,t) ,1).
\end{align}
More explicitly,
\begin{align}\label{eq:grafo2}
 h_t(\alpha,t) &= \frac{(\rho^- - \rho^+)}{8 \pi} \int_\TT \log (2(\cosh (h(\alpha,t)-h(\beta,t))-\cos( \alpha-\beta)) ) h(\beta) [ 1+h_\alpha(\alpha) h_\alpha (\beta) ]   d \beta \nonumber\\
 &\quad+  \frac{(\rho^- - \rho^+)}{8 \pi} \int_\TT \frac{h(\beta) (h(\alpha)-h(\beta) )}{\cosh(h(\alpha)-h(\beta)) - \cos (\alpha-\beta)} \left[ (h_\alpha(\alpha)h_\alpha(\beta)-1) \sinh(h(\alpha)-h(\beta))\right] d\beta\nonumber\\
 &\quad+  \frac{(\rho^- - \rho^+)}{8 \pi} \int_\TT \frac{h(\beta) (h(\alpha)-h(\beta) )}{\cosh(h(\alpha)-h(\beta)) - \cos (\alpha-\beta)} \left[(h_\alpha (\alpha) + h_\alpha (\beta) ) \sin (\alpha-\beta)   \right] d\beta.
\end{align}
Then, define a reparametrization of the curve 
$$  {z}(\alpha,t) = \tilde z (\phi(\alpha,t),t)$$
with new parameter
$$
\xi = \phi(\alpha,t).
$$
We want to find $\phi$ such that $\tilde{z}(\xi,t)$ solves 
\begin{align*}
\tilde{z}_t(\xi,t) & =  (\rho^- - \rho^+)  \int_{\TT} \St(\tilde{z}(\xi,t)-\tilde{z}(\chi,t)) \cdot \partial_\chi \tilde{z}^\perp (\chi,t) \tilde{z}_2(\chi,t)  d\chi.
\end{align*}
Changing variables
$$
\chi=\phi(\beta),\quad \partial_\beta z^\perp (\beta,t)=\partial_\chi \tilde{z}^\perp (\chi,t) \phi'(\beta) ,\quad d\chi=\phi'(\beta)d\beta
$$
$$\int_{\TT} \St(\tilde{z}(\xi,t)-z(\beta,t)) \cdot \partial_\beta z^\perp (\beta,t) z_2(\beta,t)  d\beta=\int_{\TT} \St(\tilde{z}(\xi,t)-\tilde{z}(\chi,t)) \cdot \partial_\chi \tilde{z}^\perp (\chi,t) \tilde{z}_2(\chi,t)  d\chi
$$
On the one hand, we compute
\begin{align*}
z_t(\alpha,t)&=\partial_t \tilde{z}(\phi(\alpha,t),t)\\
& =  \phi_t(\alpha,t) \partial_\xi \tilde z(\xi,t) + \tilde z_t(\xi,t).
\end{align*}
Dealing with the non-local part of the contour equation we find
\begin{align*}
z_t(\alpha,t)& = (\rho^- - \rho^+)  \int_{\TT} \St(z(\alpha,t)-z(\beta,t)) \cdot \partial_\beta z^\perp (\beta,t) z_2(\beta,t)  d\beta+ \lambda(\alpha,t) \partial_\alpha z(\alpha,t)\\
& = (\rho^- - \rho^+)  \int_{\TT} \St(\tilde{z}(\xi,t)-\tilde{z}(\chi,t)) \cdot \partial_\chi \tilde{z}^\perp (\chi,t) \tilde{z}_2(\chi,t)  d\chi+ \lambda(\alpha,t)\phi_\alpha(\alpha,t)\partial_\xi \tilde{z}(\xi,t).
\end{align*}
As a consequence, if we chose $\phi$ solving
$$ \phi_t(\alpha,t) =  \lambda(z)(\alpha,t) \phi_\alpha(\alpha,t)$$
we obtain that $\tilde{z}$ solves
\begin{align*}
\tilde{z}_t(\xi,t) & =  (\rho^- - \rho^+)  \int_{\TT} \St(\tilde{z}(\xi,t)-\tilde{z}(\chi,t)) \cdot \partial_\chi \tilde{z}^\perp (\chi,t) \tilde{z}_2(\chi,t)  d\chi.
\end{align*}
Thus, finding a solution of \eqref{eq:grafo} is equivalent to finding a solution of \eqref{eq:curvaarbitraria}.

\section{The linear operator}\label{section:linearoperator} 
Let us linearize around the steady state $z_\ast = (\alpha,0).$ Then, we find that the linear problem reads
\begin{equation}\label{lambdaeq1}
h_t(\alpha,t)=  \frac{(\rho^- -\rho^+)}{4}\frac{1}{2\pi} \int_\TT h(\alpha-\beta,t) \log\left(4  \sin^2\left(\ \frac{\beta}{2}\right) \right) d \beta.
\end{equation}
Using the properties of the logarithm, we find the equation
\begin{equation}\label{lambdaeq}
h_t(\alpha,t)=  -\frac{(\rho^- -\rho^+)}{4} \Lambda^{-1} (h)(\alpha,t)+\frac{(\rho^- -\rho^+)}{4}\frac{\log\left(4 \right) }{2\pi} \int_\TT h(\beta,t) d \beta,
\end{equation}
where the operator $\Lambda^{-1}$ is given explicitly as follows
$$
\Lambda^{-1} (h)(\alpha,t)=-\frac{1}{2\pi} \int_\TT h(\alpha-\beta,t) \log \left( \sin^2\left(\ \frac{\beta}{2}\right) \right) d \beta.
$$
 Its Fourier coefficients are the following (see \cite{castro_uniformly_2016}) 
$$
\widehat{\Lambda^{-1}}(k)=\left\{\begin{array}{ll}
	-\frac{1}{2\pi} \int_\TT e^{-ik\beta} \log \big| \sin^2(\beta/2) \big| d \beta = \frac{1}{|k|}, & \text{ for }k \neq 0,\\
	-\frac{1}{2\pi} \int_\TT \log \big| \sin^2(\beta/2) \big| d \beta = \log (4),& \text{ for }k = 0.
\end{array}\right. 
$$
Another relevant singular operator is the Hilbert transform in the torus
\begin{equation}\label{hilbert}
	 \mathcal{H}(h)(\alpha,t) = \frac{1}{2 \pi} \text{PV} \int_{\TT} \cot \left(\frac{\beta}{2}\right) h(\alpha-\beta,t) d \beta
\end{equation}
which is related to the previous operator in the form
$$\partial_\alpha \left( \Lambda^{-1} (h)(\alpha,t)  \right)= - \mathcal{H}(h)(\alpha,t). $$

Let us emphasize that in the stable case, when $ \rho^- - \rho^+ >0$, the linear operator shows a damping effect without regularization. In this sense, the linear operator is \emph{not} of parabolic type. To simplify the notation we write
$$
\bar{\rho}=\frac{(\rho^- -\rho^+)}{4}.
$$
Using the previous computations, we have that
$$
\hat{h}_t(k,t) = \left\{\begin{array}{ll}
	-  \frac{\bar{\rho}}{|k|} \hat{h}(k,t), & \text{ for }k \neq 0,\\
	0,& \text{ for }k = 0,
\end{array}\right. 
$$
and it follows that 
\begin{align}\label{fouriermodes}
\hat{h}(k,t) =\left\{\begin{array}{ll}
e^{- \frac{\bar{\rho} }{|k|}t} \hat{h}_0(k),& \text{ for }k \neq 0,\\
\hat{h}_0(0), &  \text{ for }k=0.
\end{array}\right.
\end{align}
We will use the notation 
\begin{equation*}
h(x,t)= e^{- \bar{\rho} \Lambda^{-1} t} h_0(x),
\end{equation*}
for the semi-group acting on some zero-mean data $h_0$. We observe that if we set an initial data with zero mean, i.e., $\hat{h}_0(0,t) =0$, then, this property is conserved due to \eqref{fouriermodes}. In this section we will focus in studying the time decay of the solution to \eqref{lambdaeq} in the Rayleigh-Taylor stable case $\bar{\rho}>0$. 

Before stating our results, let us introduce the following Banach scale of spaces

\begin{equation}\label{sobolev}
	H^{s}_\nu=\{u\in L^2(\mathbb{T})\text{ s.t. }\sum_{k=-\infty}^\infty e^{2\nu |k|}|k|^{2s}|\hat{u}(k)|^2<\infty\},
\end{equation}

\begin{equation}\label{wiener}
A^{s}_\nu=\{u\in L^2(\mathbb{T})\text{ s.t. }\sum_{k=-\infty}^\infty e^{\nu |k|}|k|^{s}|\hat{u}(k)|<\infty\}.
\end{equation}
To simplify notation we write
$$
H^s=H^{s}_0,\quad H_{\nu}=H^{0}_\nu$$

together with
$$
A^s=A^{s}_0,\quad A_{\nu}=A^{0}_\nu, \quad A=A_0^0. $$


In particular, we have the following result:

\begin{prop}[Decay in $L^2$ and $A$]\label{PropDH} Let us consider equation \eqref{lambdaeq} in the RT stable case $\bar{\rho}>0$ with zero mean initial data $h_0$. Then we have that the solution verifies 
\begin{equation}  \label{lineardecay}
\norm{h}_{L^2} \leq C \norm{h_0}_{H^{s_0}} (1+t)^{-s},
\end{equation}
\begin{equation}  \label{lineardecayA}
	\norm{h}_{{A}} \leq C \norm{h_0}_{{A}^{s_0}} (1+t)^{-s},
\end{equation}
where
\begin{equation}
 \label{decayrate}
0< s <s_0
\end{equation}
is arbitrary. Furthermore, with initial analytic regularity, we also have the following exponential decay
\begin{equation}  \label{lineardecay2}
	\norm{h}_{{L}^2} \leq C \norm{h_0}_{{H}_\nu} e^{-\sqrt{\bar{\rho}\nu t}},
\end{equation}
\begin{equation}  \label{lineardecay2A}
	\norm{h}_{{A}} \leq C \norm{h_0}_{{A}_\nu} e^{-\sqrt{\bar{\rho}\nu t}}.
\end{equation}
\end{prop}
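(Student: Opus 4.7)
The plan is to use the explicit semigroup \eqref{fouriermodes} and reduce each of the four bounds to a pointwise inequality on the Fourier multiplier $m(k,t)=e^{-\bar\rho t/|k|}$. The zero-mean assumption on $h_0$ removes the $k=0$ frequency, so only $|k|\geq 1$ ever appears.

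For the polynomial-decay bounds \eqref{lineardecay} and \eqref{lineardecayA}, I first establish the symbol inequality
\begin{equation*}
e^{-\bar\rho t/|k|}\leq C_{s,s_0,\bar\rho}\,(1+t)^{-s}\,|k|^{s_0},\qquad |k|\geq 1,\ t\geq 0,
\end{equation*}
for any $0<s<s_0$. Viewing $\varphi(x)=e^{-\bar\rho t/x}x^{-s_0}$ as a function of $x\geq 1$, elementary calculus locates its maximum at $x^\ast=\bar\rho t/s_0$ when $t\geq s_0/\bar\rho$, with value $e^{-s_0}(s_0/\bar\rho)^{s_0}t^{-s_0}$; since $s<s_0$, the factor $(1+t)^s t^{-s_0}$ stays bounded as $t\to\infty$. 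For $t\leq s_0/\bar\rho$ the function $\varphi$ is decreasing on $[1,\infty)$ and controlled by $1$, while $(1+t)^{-s}$ is bounded below, so a single constant $C_{s,s_0,\bar\rho}$ covers both regimes. Squaring and summing then gives
\begin{equation*}
\|h\|_{L^2}^2=\sum_{k\neq 0}e^{-2\bar\rho t/|k|}|\hat h_0(k)|^2\leq C(1+t)^{-2s}\sum_{k\neq 0}|k|^{2s_0}|\hat h_0(k)|^2=C(1+t)^{-2s}\|h_0\|_{H^{s_0}}^2,
\end{equation*}
and the identical argument in $\ell^1$ rather than $\ell^2$ yields \eqref{lineardecayA}.

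For the exponential-decay bounds \eqref{lineardecay2} and \eqref{lineardecay2A} the key ingredient is the AM--GM inequality
\begin{equation*}
\frac{\bar\rho t}{|k|}+\nu|k|\geq 2\sqrt{\bar\rho\nu t},\qquad |k|\geq 1,\ \nu,t>0.
\end{equation*}
Splitting the initial datum as $|\hat h_0(k)|=e^{-\nu|k|}\bigl(e^{\nu|k|}|\hat h_0(k)|\bigr)$ and absorbing the $e^{-\nu|k|}$ factor into the semigroup symbol, I obtain
\begin{equation*}
|\hat h(k,t)|=e^{-\bar\rho t/|k|}|\hat h_0(k)|\leq e^{-2\sqrt{\bar\rho\nu t}}\,e^{\nu|k|}|\hat h_0(k)|,
\end{equation*}
so that summing in $\ell^2$ produces \eqref{lineardecay2} and summing in $\ell^1$ produces \eqref{lineardecay2A}. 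In fact this argument gives the slightly stronger rate $e^{-2\sqrt{\bar\rho\nu t}}$ in both bounds.

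None of these steps is a serious obstacle: the whole proposition is a careful packaging of a one-variable optimisation (for the algebraic decay) and of AM--GM (for the exponential decay). The only mildly delicate point is the case analysis in the symbol bound for the polynomial regime, where one must track a uniform constant across the small-$t$ regime and the large-$t$ regime in which the optimiser $x^\ast=\bar\rho t/s_0$ is active inside $[1,\infty)$.
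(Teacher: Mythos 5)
Your argument is correct and it takes a genuinely different route from the paper's. The paper's proof splits the Fourier sum at a frequency threshold $M$, bounds the low modes by $e^{-2\bar\rho t/M}\norm{h_0}_{L^2}^2$ and the high modes by $M^{-2s_0}\norm{h_0}_{H^{s_0}}^2$ (respectively $e^{-2\nu M}\norm{h_0}_{H_\nu}^2$ in the analytic case), and then optimises over $M$; in the polynomial regime it additionally invokes the elementary bound $x^ne^{-x}\leq C(n)$ and lets $n\to\infty$ to push the rate $s=\frac{ns_0}{n+2s_0}$ up to any $s<s_0$. You instead establish a single pointwise inequality on the multiplier,
\[
e^{-\bar\rho t/|k|}\leq C_{s,s_0,\bar\rho}\,(1+t)^{-s}\,|k|^{s_0}\quad(|k|\geq 1),
\]
by locating the maximiser $x^\ast=\bar\rho t/s_0$ of $x\mapsto e^{-\bar\rho t/x}x^{-s_0}$ and handling the regime $x^\ast<1$ separately, and then sum in $\ell^2$ or $\ell^1$ without any frequency splitting. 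For the exponential decay you replace the threshold optimisation by a direct application of AM--GM, $\bar\rho t/|k|+\nu|k|\geq 2\sqrt{\bar\rho\nu t}$, which absorbs the weight $e^{\nu|k|}$ into the symbol. Both approaches are correct; yours is more direct, avoids the auxiliary parameters $M$ and $n$, and in the exponential bounds actually gives the slightly sharper rate $e^{-2\sqrt{\bar\rho\nu t}}$ (and, if one tracks constants, your polynomial symbol bound in fact already holds at $s=s_0$, though the statement only asks for $s<s_0$). The paper's frequency split buys nothing extra here; it is simply the more commonly seen packaging of the same one-variable optimisation.
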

\begin{proof} Let us first obtain an algebraic decay rate for Sobolev initial data. In the RT stable case we fix $M>0<s_0$  and compute
\begin{align*}
	\norm{h}_{{L}^2}^2 & =  \sum_{|k| \leq M, k \neq 0 } |\hat{h}(k,t)|^2 + \sum_{|k| > M} |\hat{h}(k,t)|^2 \\
	& \leq   \sum_{|k| \leq M, k \neq 0 } e^{ -2  \frac{\bar{\rho}}{|k|} t}|\hat{h}_0(k)|^2 + \frac{1}{M^{2s_0}} \sum_{|k|>M } |\hat{h}_0(k)|^2 |k|^{2s_0} \\
	& \leq   e^{ -2 \bar{\rho} \frac{1}{M} t} \norm{h_0}_{{L}^2}^2 + \frac{1}{M^{2s_0}} \norm{h_0}_{ H^{s_0}}^2. 
\end{align*}
The global bound $x^ne^{-x}\leq C(n)$ for $x\geq 0$ provides
$$
\norm{h}_{L^2}^2  \leq  \frac{C(n)M^n}{\bar\rho^n 2^n t^n} \norm{h_0}_{L^2}^2 + \frac{1}{M^{2s_0}} \norm{h_0}_{{H}^{s_0}}^2
$$
where $n$ is arbitrary and $t>1$. Taking $M$ such that
\begin{equation*}
C(n) \frac{M^n}{ \bar\rho^n 2^n t^n} = \frac{1}{M^{2s_0}},
\end{equation*}
it is possible to get
\begin{align*}
\norm{h}_{ L^2} & \leq C(n,\bar\rho,s_0) \norm{h_0}_{ H^{s_0}} t^{-\frac{ s_0 n}{n+2 s_0}}.
\end{align*}
The uniform bound $\|h\|_{ L^2}\leq \norm{h_0}_{ L^2}$ for $t \leq 1$ yields the desired bound
\[
 \norm{h}_{ L^2} \leq 
 C(n,\bar\rho,s_0) \norm{h_0}_{ H^{s_0}} (1+t)^{-\frac{s_0 n}{n+2s_0}},
\]
increasing the constant $C(n,\bar\rho,s_0)$.
Taking $n$ big enough we find
\begin{equation*}
s:=\frac{ n s_0 }{n + 2 s_0}<s_0.
\end{equation*}
A similar approach yields
\[
\norm{h}_{ A} \leq 
C(n,\bar\rho,s_0) \norm{h_0}_{ A^{s_0}} (1+t)^{-\frac{s_0 n}{n+2s_0}}.
\]
We compute for a different $M>0$ the following
\begin{align*}
	\norm{h}_{ L^2}^2 & =  \sum_{|k| \leq M, k \neq 0 } |\hat{h}(k,t)|^2 + \sum_{|k| > M} |\hat{h}(k,t)|^2 \\
	& \leq  e^{ -2  \frac{\bar{\rho}}{M} t} \sum_{|k|\leq M, k \neq 0 } |\hat{h}_0(k)|^2 + e^{-2\nu M}\sum_{|k|>M } |\hat{h}_0(k)|^2 e^{2\nu|k|} \\
	& \leq   e^{ -2  \frac{\bar{\rho}}{M} t} \norm{h_0}_{ L^2}^2 + e^{-2\nu M} \norm{h_0}_{ H_\nu}^2. 
\end{align*}
Taking 
$$
\frac{\bar{\rho}}{M}t=\nu M,
$$
allows us to conclude that
$$
\norm{h}_{ L^2} \leq\norm{h_0}_{ H_\nu} e^{-\sqrt{\bar\rho\nu t}}.
$$
A similar approach provides
$$
\norm{h}_{ A} \leq\norm{h_0}_{ A_\nu} e^{-\sqrt{\bar\rho\nu t}}.
$$
\end{proof}
\section{Local existence}\label{section:localexistence}
This section is devoted to prove local in time existence of solutions for the contour dynamics problem in the case of an interface defined by a curve $z(\alpha,t)$ \eqref{eq:curvaarbitraria}. We will prove that the local existence of smooth initial data is guaranteed regardless of whether the fluids are in a stable/unstable stratification.

As we need to control the arc-chord condition of the interface to avoid self-intersections, we define the function
\begin{equation}\label{arcchord}
	\mathcal{F}(z)(\alpha,\beta,t) = \frac{\beta^2}{\cosh (z_2(\alpha,t)-z_2(\alpha-\beta,t))-\cos (z_1(\alpha,t)-z_1(\alpha-\beta,t))} 
\end{equation}
for $\alpha, \beta \in (-\pi,\pi)$ and
\begin{equation}\label{arcchord0}
	\mathcal{F}(z)(\alpha,0,t) = \frac{2}{|\partial_\alpha z(\alpha,t)|^2}.
\end{equation}
Defining 
\begin{equation*}
	\norm{z(t)}_{L^\infty} = \sup_{\alpha \in \TT} \seminorm{z(\alpha,t) },
\end{equation*} 
and
\begin{equation*}
	\norm{\mathcal{F}(z)}_{L^\infty}  = \sup_{\alpha,\beta \in \TT} \seminorm{\mathcal{F}(z)},
\end{equation*} 
we emphasize that as long as 
$$
\norm{\mathcal{F}(z)}_{L^\infty} < \infty,
$$ self-intersections of the curve are excluded. 

Our result is the following
\begin{prop}[Local existence of solutions in  $C^{1,\gamma}(\TT)$]\label{thmlocal}
Let $0<\gamma<1$ be a fixed parameter and $z_0(\alpha) \in C^{1,\gamma}(\TT)$ be the initial data satisfying
$$
\norm{\mathcal{F}(z_0)}_{L^\infty}  < \infty.
$$ 
Then, there is a time $0<T$ such that there exists a unique solution 
$$
z \in C^1((-T,T);C^{1,\gamma}(\TT)),
$$ of \eqref{eq:curvaarbitraria} satisfying the arc-chord condition.
\end{prop}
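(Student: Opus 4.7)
The plan is to apply the abstract Picard theorem for ODEs on a suitable Banach space. Writing the contour equation as $z_t = F(z)$ with
\[ F(z)(\alpha) = (\rho^--\rho^+)\int_{\TT} \St(z(\alpha)-z(\beta))\cdot \partial_\beta z^\perp(\beta)\, z_2(\beta)\, d\beta, \]
I would work in the Banach space $X = C^{1,\gamma}(\TT)$ (after splitting off the linear part $(\alpha,0)$ if one wishes to treat a periodic perturbation), and introduce the open subset
\[ \mathcal{O}_R = \{\, z \in X : \norm{z}_{C^{1,\gamma}} + \norm{\mathcal{F}(z)}_{L^\infty} < R \,\}, \]
on which the arc-chord condition prevents self-intersections and controls the denominators appearing in $\St$. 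Picard's theorem then yields a unique local solution $z \in C^1((-T,T); X)$ once I establish that $F : \mathcal{O}_R \to X$ is locally Lipschitz and that $\norm{\mathcal{F}(z(t))}_{L^\infty}$ propagates for short times.

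The first technical step is to decompose the Stokeslet into its singular and regular parts. The matrix piece $\tfrac{y_2}{\cosh y_2 - \cos y_1}(\cdots)$ is uniformly bounded by a smooth function of $y$: the $y_2$ factor cancels the logarithmic singularity at the origin, so substituting $y = z(\alpha)-z(\beta)$ and exploiting $\norm{\mathcal{F}(z)}_{L^\infty} < \infty$ gives a bounded, Lipschitz contribution to $F(z)$ that poses no difficulty in $C^{1,\gamma}$. The entire analytical work therefore concentrates on the logarithmic piece
\[ G(z)(\alpha) = \int_{\TT}\log\bigl(2(\cosh(z_2(\alpha)-z_2(\beta))-\cos(z_1(\alpha)-z_1(\beta)))\bigr)\, \partial_\beta z^\perp(\beta)\, z_2(\beta)\, d\beta. \]
Using the arc-chord condition, the logarithm behaves like $\log|\alpha-\beta|^2 + \mathcal{R}(z)(\alpha,\beta)$ where $\mathcal{R}$ is a smooth remainder, so $G(z)$ itself is easily seen to lie in $C^\gamma$.

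The key estimate is to control $\partial_\alpha G(z)$ in $C^\gamma$. A differentiation under the integral sign (together with an integration by parts to move the $\partial_\alpha$ off the singular logarithm when advantageous) produces a principal value singular integral of Calderón--Zygmund type with a periodic Hilbert-transform-like kernel. Decomposing near-diagonal versus far contributions and using $\partial_\beta z(\beta) \approx \partial_\alpha z(\alpha)$ up to $C^\gamma$ errors, one can extract cancellations that reduce everything to standard bounds for the periodic Hilbert transform $\mathcal H$ on $C^\gamma$. This gives
\[ \norm{F(z)}_{C^{1,\gamma}} \leq P\bigl(\norm{z}_{C^{1,\gamma}},\norm{\mathcal{F}(z)}_{L^\infty}\bigr), \]
and an analogous decomposition of $F(z_1)-F(z_2)$ yields the local Lipschitz bound in $X$. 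This is the step I expect to be the main obstacle, since the Hölder estimate requires exploiting the precise cancellation in the PV integral and carefully tracking the dependence on the arc-chord constant.

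Finally, a direct computation shows that
\[ \tfrac{d}{dt}\norm{\mathcal{F}(z(t))}_{L^\infty} \leq P\bigl(\norm{z}_{C^{1,\gamma}},\norm{\mathcal{F}(z)}_{L^\infty}\bigr), \]
obtained by differentiating \eqref{arcchord}, bounding the numerator using the previously established control of $z_t = F(z)$ in $C^{1,\gamma} \hookrightarrow C^1$, and estimating the denominator via the arc-chord itself. This closes a differential inequality for the quantity $\norm{z}_{C^{1,\gamma}} + \norm{\mathcal{F}(z)}_{L^\infty}$ that keeps $z(t)$ inside some $\mathcal{O}_{R'}$ on a short time interval $(-T,T)$. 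Together with the local Lipschitz property of $F$ on $\mathcal{O}_{R'}$, the abstract Picard theorem yields the unique solution $z \in C^1((-T,T); C^{1,\gamma}(\TT))$ satisfying the arc-chord condition, independently of the sign of $\rho^- - \rho^+$.
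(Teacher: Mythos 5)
Your overall strategy is the same as the paper's: apply Picard's theorem in $C^{1,\gamma}(\TT)$ on the open set cut out by the $C^{1,\gamma}$ norm and the arc--chord function $\mathcal F(z)$, and prove that the nonlinear operator $N(z)$ (the right-hand side of the contour equation) both maps this set into $C^{1,\gamma}$ and is locally Lipschitz there, with the heavy lifting coming from splitting off the singular part of the kernel, using the integral mean value theorem / Taylor expansion to isolate the principal-value singularity, and bounding the resulting Hilbert-transform piece via $\mathcal H : C^\gamma \to C^\gamma$. Two points where your sketch deviates from, or is less careful than, what actually happens.

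First, your dismissal of the matrix part of the Stokeslet is too quick. The kernel
\[
\St^{(2)}(y) = -\frac{y_2}{8\pi(\cosh y_2 - \cos y_1)}\begin{pmatrix}-\sinh y_2 & \sin y_1\\ \sin y_1 & \sinh y_2\end{pmatrix}
\]
is indeed bounded as $y\to 0$ (the numerator cancels the $|y|^{-2}$ blow-up of $(\cosh y_2 - \cos y_1)^{-1}$), but it is only continuous there, not $C^1$: its entries are asymptotically degree-zero homogeneous at the origin, so their first derivatives blow up like $1/|y|$. After you differentiate $N_2(z)(\alpha)=\int_\TT \St^{(2)}(z(\alpha)-z(\beta))\cdot z_\beta^\perp(\beta)\,z_2(\beta)\,d\beta$ in $\alpha$, the integrand has precisely the same $1/\beta$-type PV singularity at $\beta=\alpha$ as $\partial_\alpha N_1$, and the $C^\gamma$ estimate for $\partial_\alpha N_2$ requires the same decomposition, Taylor-expansion cancellations, and Hilbert-transform bound as for $N_1$. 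The paper treats $N_1$ in detail and explicitly says the $N_2$ estimate is ``similar,'' not trivial; your sketch would need to go through this work rather than wave it away.

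Second, your final step --- proving a Gronwall-type differential inequality for $\|\mathcal F(z(t))\|_{L^\infty}$ to keep $z(t)$ inside some $\mathcal O_{R'}$ --- is not needed in the paper's formulation of Picard. As stated (Lemma \ref{picard}), once $N : O \to B$ is locally Lipschitz and $X_0\in O$, the theorem already yields a unique $X \in C^1((-T,T);O)$: the solution stays in the open set $O^M$ automatically by continuity of the trajectory in $B$, since $O^M$ is open in the $C^{1,\gamma}$ topology. Your extra propagation estimate is harmless and would be the right thing to do in a bootstrap/continuation argument, but it duplicates what the abstract theorem already gives. What one does need to justify, and what the paper notes briefly, is that $O^M$ is in fact open, i.e.\ that $z\mapsto\|\mathcal F(z)\|_{L^\infty}$ is continuous on the arc--chord set; your write-up implicitly relies on this but does not say so.
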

\begin{proof}
\textbf{Sketch of the proof and functional framework;} The proof of this result is based on the Picard Theorem on a suitable Banach Space (see \cite{majda_vorticity_2001} for another application of this method in a different problem). The precise statement of Picard Theorem that we are going to use is
	\begin{lem}[Picard Theorem on a Banach Space]\label{picard}
		Let $O \subseteq B$ be an open subset of a Banach space $B$ and let $N(X)$ be a nonlinear operator satisfying the following criteria: 
		\begin{itemize}
			\item $N(X)$ maps $O$ to $B$,
			\item $N(X)$ is locally Lipschitz continuous, i.e., for any $X \in O$ there exists $L >0$ such that 
			\begin{equation*}
				\norm{N(\tilde{X})-N(X)}_B \leq L \norm{\tilde{X}-X}_B \quad \forall \, \tilde{X}  \in O.
			\end{equation*}
			Then, for any $X_0 \in O$, there exists a time $T>0$ such that the ODE 
			\begin{equation*}
				\frac{d X}{dt} = N(X), \quad X|_{t=0} = X_0 \in O
			\end{equation*}
			has a unique local solution $X \in C^1((-T,T);O)$.
		\end{itemize}
\end{lem} 
Our functional frame will be the Banach space $B=C^{1,\gamma}(\TT)$, equipped with the norm 
 \begin{equation*}
 \norm{z(t)}_{C^{1,\gamma}} = \norm{z(t)}_{L^\infty} + \norm{z_\alpha(t)}_{L^\infty}  + \seminorm{z_\alpha(t)}_\gamma,
 \end{equation*}
where the $\gamma-$Hölder seminorm is defined as
\begin{equation*}
\seminorm{z(t)}_\gamma = \max_{\alpha \neq \beta} \frac{|z(\alpha,t)-z(\beta,t)|}{|\alpha - \beta|^\gamma}.
\end{equation*}
We define the set $O^M \subset C^{1,\gamma}(\TT)$ as
 \begin{equation}\label{open}
 	O^M = \{ z \in C^{1,\gamma}(\TT); \,\, \norm{\mathcal{F}(z)}_{L^\infty}  < M, \, \, \norm{z}_{C^{1,\gamma}} < M   \}.
 \end{equation}
This set is open in $C^{1,\gamma}$ in the appropriate topology. Defining 
\begin{align} \label{b1}
N(z)(\alpha,t) =   (\rho^- - \rho^+)  \int_{\TT} \St(z(\alpha,t)-z(\beta,t)) z_\beta^\perp(\beta,t) z_2(\beta,t) d\beta, 
\end{align}
equation \eqref{eq:curvaarbitraria} can be written as
$$
\frac{d z(\alpha,t)}{dt} = N(z)(\alpha,t).
$$
Once we have established the functional setting of the theorem, it remains to prove the nonlinear operator satisfies the two hypothesis of the Picard Theorem. 

\textbf{Step 1: $N(z)$ maps $O^M$ to $C^{1,\gamma}(\TT)$;} We need to prove 
	\begin{equation*}
	\norm{N(z)}_{C^{1,\gamma}} \leq C(\rho,\gamma, M), \text{ for all } z \in O^M.
	\end{equation*}
	The proof of this proposition boils down to the following estimate:
	\begin{align}
\label{est2} &  \seminorm{N_\alpha(z)}_\gamma \leq C(\rho,\gamma,M).
	\end{align}
We write
 \begin{equation*}
 N(z)(\alpha)  = \frac{ (\rho^- - \rho^+) }{8 \pi } \left( N_1(z)(\alpha) - N_2(z)(\alpha) \right) 
 \end{equation*}
with
\begin{equation}\label{N1}
N_1(z)(\alpha) = \int_\TT \log \left(2(\cosh ({z_2}(\alpha)-z_2(\beta)) - \cos( {z_1}(\alpha)-z_1(\beta)))  \right) z_2(\beta) z_\beta^\perp (\beta) d \beta,
\end{equation}
\begin{align*}
&N_2(z)(\alpha) =   \int_\TT  \frac{({z_2}(\alpha)-z_2(\beta) ) z_2(\beta)  }{ \cosh( {z_2}(\alpha)-z_2(\beta) )- \cos ( {z_1}(\alpha)-z_1(\beta)) } \\
& \quad \times  \left( \begin{array}{cc}
- \sinh ({z_2}(\alpha)-z_2(\beta)) & \sin ( {z_1}(\alpha)-z_1(\beta))  \\
\sin ( {z_1}(\alpha)-z_1(\beta))  & 	\sinh({z_2}(\alpha)-z_2(\beta)) 
\end{array} \right) z_\alpha^\perp (\beta) d \beta  .
\end{align*}
For the sake of brevity, we are going to estimate only the term $N_1$, being $N_2$ similar.
To obtain an expression of $\partial_\alpha N_1(z)(\alpha)$, we differentiate the expression \eqref{N1} and we perform the change of variables $\beta = \alpha - \beta'$ for simplicity. We get
\begin{equation}\label{dN1}
\partial_\alpha N_1(z)(\alpha)= \int_\TT \frac{ z_\alpha(\alpha) \cdot  ( \sin ( {z_1}_-(\alpha) ),  \sinh ( {z_2}_-(\alpha) ))  }{ \beta^2}  \FF(z)(\alpha,\beta) z_2(\alpha-\beta) z_\alpha^\perp (\alpha-\beta)d \beta,
\end{equation}
where 
\begin{equation}\label{dif}
	{z_i}_-(\alpha) = z_i(\alpha)-z_i(\alpha-\beta).
\end{equation}

We have the following splitting
\begin{equation}\label{dN1h}
\partial _\alpha N_1(z) (\alpha +h) - \partial_\alpha N_1 (z) (\alpha) = I_1 + I_2 + I_3 + I_4+I_5,
\end{equation}
where
\begin{align*}
I_1 &= \int_\TT  \frac{( z_\alpha(\alpha+h)- z_\alpha(\alpha) )\cdot (\sin ( {z_1}_-(\alpha+h) ) ,\sinh ( {z_2}_-(\alpha+h)) ) }{\beta^2 } \\
& \quad \times   \FF(z)(\alpha+h,\beta)  z_2(\alpha+h-\beta) z_\alpha^\perp (\alpha+h-\beta)d \beta ,
\end{align*}
\begin{align*}
I_2 &= \int_\TT  \frac{z_\alpha(\alpha) \cdot  (\sin ( {z_1}_-(\alpha+h)) -\sin(  {z_1}_-(\alpha) ), \sinh ( {z_2}_-(\alpha+h)) -\sinh ( {z_2}_-(\alpha))  ) }{ \beta^2} \\
& \quad \times  \FF(z)(\alpha+h,\beta)   z_2(\alpha+h-\beta) z_\alpha^\perp (\alpha+h-\beta)d \beta ,
\end{align*}
\begin{align*}
I_3 &= \int_\TT z_\alpha(\alpha) \cdot  ( \sin(  {z_1}_-(\alpha) ),  \sinh(  {z_2}_-(\alpha)) ) \\
& \quad \times   \left(  \FF(z)(\alpha+h,\beta) - \FF(z)(\alpha,\beta)   \right)  \frac{z_2(\alpha+h-\beta) z_\alpha^\perp (\alpha+h-\beta)}{\beta^2}d \beta ,
\end{align*}
\begin{align*}
I_4 &= \int_\TT  \FF(z)(\alpha,\beta)   \frac{z_\alpha(\alpha) \cdot  ( \sin ( {z_1}_-(\alpha) ),  \sinh(  {z_2}_-(\alpha)) )  }{\beta^2  } (z_2(\alpha+h-\beta)-z_2(\alpha-\beta) )z_\alpha^\perp (\alpha-\beta)d \beta ,
\end{align*}
and
\begin{align*}
I_5 &= \int_\TT  \FF(z)(\alpha,\beta)  \frac{z_\alpha(\alpha) \cdot  ( \sin ( {z_1}_-(\alpha)) ,  \sinh ( {z_2}_-(\alpha)) ) )   }{ \beta^2} z_2(\alpha-\beta) (z_\alpha^\perp (\alpha+h-\beta)-z_\alpha^\perp (\alpha-\beta))d \beta .
\end{align*}
To prove $\seminorm{\partial_\alpha N_1(z)}_{\gamma}  \leq C(\gamma,M)$, it suffices to prove 
\begin{equation}\label{1to5}
|I_i|\leq |h|^\gamma C(\gamma,M) \text{ for } i = 1 \dots 5.
\end{equation} 
We are going to estimate only the first term, $I_1$, being the remainder terms similar. We compute
\begin{align*}
I_1 &= \int_\TT  \frac{( z_\alpha(\alpha+h)- z_\alpha(\alpha) )\cdot (\sin ( {z_1}_-(\alpha+h) ) ,\sinh ( {z_2}_-(\alpha+h)) ) }{\beta^2 } \\
& \quad \times \FF(z)(\alpha+h,\beta)  z_2(\alpha+h-\beta) z_\alpha^\perp (\alpha+h-\beta)d \beta \\
& = I_{1,1} + I_{1,2} + I_{1,3} + I_{1,4} + I_{1,5} + I_{1,6}.
\end{align*}
The first of these terms is
\begin{align*}
I_{1,1} &= \int_\TT  \frac{( z_\alpha(\alpha+h)- z_\alpha(\alpha) )\cdot \left[ (\sin( {z_1}_-(\alpha+h)),\sinh ( {z_2}_-(\alpha+h)))-  {z}_-(\alpha+h)   \right] }{ \beta^2 }  \\
&\quad  \times \FF(z)(\alpha+h,\beta)  z_2(\alpha+h-\beta) z_\alpha^\perp (\alpha+h-\beta)d \beta.
\end{align*}
Using the integral Mean Value Theorem 
\begin{equation}\label{meanvaluethm}
	z_-(\alpha+h) = \int_0^1  z'( (\alpha+h)s + (1-s)(\alpha+h-\beta) ) \beta ds,
\end{equation}
which, together with the series expansion of the trigonometric functions $\sin$ and $\sinh$, implies 
\begin{equation*}
	\seminorm{(\sin({z_1}_-(\alpha+h)),\sinh ({z_2}_-(\alpha+h)))-  {z}_-(\alpha+h)} \lesssim |\beta|^3 \norm{z_\alpha}_{L^\infty} ^3,
\end{equation*}
we find that
\begin{align*}
|I_{1,1}| &\lesssim |h|^\gamma \seminorm{z_\alpha}_\gamma  \norm{\mathcal{F}(z)}_{L^\infty} \norm{z_\alpha}_{L^\infty }^4 \norm{z}_{L^\infty} \int_\TT |\beta| d\beta, \\
& \leq |h|^\gamma C(\gamma,M).
\end{align*}

The second term reads
\begin{align*}
I_{1,2}  &= \int_\TT \left(   \FF(z)(\alpha+h,\beta)  - \frac{2}{ | z_\alpha(\alpha+h)|^2 } \right)  \frac{ ( z_\alpha(\alpha+h)- z_\alpha(\alpha) )\cdot {z}_-(\alpha+h) }{\beta^2} \\
& \quad \times  z_2(\alpha+h-\beta) z_\alpha^\perp (\alpha+h-\beta)d \beta.
\end{align*}
In order to estimate this term, we use \eqref{meanvaluethm} and 
\begin{align}\label{asympf}
	\seminorm{ \FF(z)(\alpha+h,\beta)  - \frac{2}{ | z_\alpha(\alpha+h)|^2 } } &= \seminorm{ \frac{\beta^2 | z_\alpha(\alpha+h)|^2- 2 (\cosh ({z_2}_-(\alpha+h)) - \cosh( {z_1}_-(\alpha+h) )     )   }{| z_\alpha(\alpha+h)|^2 (\cosh {z_2}_-(\alpha+h) - \cosh( {z_1}_-(\alpha+h)  )    )   } } \nonumber\\
	& \lesssim \beta^2 \norm{\FF(z)}_{L^\infty} ^2 \norm{z_\alpha}_{L^\infty}^4.
\end{align}
to find that
\begin{align*}
|I_{1,2}| &\lesssim |h|^\gamma \seminorm{z_\alpha}_\gamma \norm{\FF(z)}_{L^\infty} ^2 \norm{z}_{L^\infty}^2 \norm{z_\alpha}_{L^\infty}^5, \\
& \leq |h|^\gamma C(\gamma,M).
\end{align*}
Note that thanks to \eqref{arcchord0}, it holds that 
\begin{equation*}
	\frac{2}{\seminorm{z_\alpha(\cdot )}^2} = \FF(\cdot,\beta,t) \leq M.
\end{equation*}
Then, the third term
\begin{align*}
I_{1,3} &= \frac{2}{ | z_\alpha(\alpha+h)|^2}  \int_\TT  \frac{  ( z_\alpha(\alpha+h)- z_\alpha(\alpha) )\cdot  {z}_-(\alpha+h) }{\beta} \frac{z_2(\alpha+h-\beta)- z_2(\alpha+h) }{\beta}z_\alpha^\perp (\alpha+h-\beta)d \beta ,
\end{align*}
can be bounded as
\begin{align*}
|I_{1,3}| &\lesssim |h|^\gamma \seminorm{z_\alpha}_\gamma  \norm{\mathcal{F}(z)}_{L^\infty} \norm{z_\alpha}_{L^\infty}^3,  \\
& \leq |h|^\gamma C(\gamma,M) .
\end{align*}
Using the previous ideas together with the inequality
\begin{equation}\label{gammaest}
	\seminorm{z(\alpha+h)-z(\alpha+h-\beta) - z_\alpha(\alpha+h) \beta} \leq |\beta|^{1+\gamma} |z_\alpha|_\gamma ,
\end{equation}
we have that
\begin{align*}
I_{1,4}  &= \frac{2z_2(\alpha+h)}{ | z_\alpha(\alpha+h)|^2}  \int_\TT  \frac{   ( z_\alpha(\alpha+h)- z_\alpha(\alpha) )\cdot ({z}_-(\alpha+h) -\beta z_\alpha(\alpha+h) ) }{\beta^2}z_\alpha^\perp (\alpha+h-\beta)d \beta ,
\end{align*}
verifies
\begin{align*}
|I_{1,4} | &\lesssim  |h|^\gamma \seminorm{z_\alpha}_\gamma^2 \norm{\mathcal{F}(z)}_{L^\infty} \norm{z}_{L^\infty} \norm{z_\alpha}_{L^\infty}  \int_\TT |\beta|^{1+\gamma-2} d\beta, \\
& \lesssim  |h|^\gamma \seminorm{z_\alpha}_\gamma^2  \norm{\mathcal{F}(z)}_{L^\infty} \norm{z}_{L^\infty} \norm{z_\alpha}_{L^\infty} , \\
& \leq |h|^\gamma C(\gamma,M) .
\end{align*}
The term
\begin{align*}
I_{1,5}  &= \frac{2z_2(\alpha+h) (z_\alpha(\alpha+h)-z_\alpha(\alpha)) \cdot z_\alpha(\alpha+h)}{ | z_\alpha(\alpha+h)|^2}  \int_\TT \left(   \frac{1}{\beta} - \cot (\beta) \right)z_\alpha^\perp (\alpha+h-\beta)d \beta ,
\end{align*}
can be estimated as follows
\begin{align*}
|I_{1,5}| &\lesssim |h|^\gamma \seminorm{z_\alpha}_\gamma   \norm{\mathcal{F}(z)}_{L^\infty} \norm{z}_{L^\infty} \norm{z_\alpha}_{L^\infty}^2 \int_\TT |\beta|d\beta, \\
& \lesssim |h|^\gamma \seminorm{z_\alpha}_\gamma   \norm{\mathcal{F}(z)}_{L^\infty} \norm{z}_{L^\infty} \norm{z_\alpha}_{L^\infty}^2, \\
& \leq |h|^\gamma C(\gamma,M) .
\end{align*}
Finally, the last term reads
\begin{align*}
I_{1,6} &= \frac{2z_2(\alpha+h)  (z_\alpha(\alpha+h)-z_\alpha(\alpha)) \cdot z_\alpha(\alpha+h)}{ | z_\alpha(\alpha+h)|^2}  2 \pi \mathcal{H}(z_\alpha^\perp)(\alpha+h).
\end{align*}
We observe that the Hilbert transform $\mathcal{H}(\cdot)$ given in \eqref{hilbert} maps continuously $C^\gamma$ to $C^\gamma$. In particular, 
\begin{equation}\label{hilbertgamma}
	\norm{\mathcal{H}(z_\alpha^\perp)}_{L^\infty } \lesssim \norm{z_\alpha^\perp}_{\gamma}.
\end{equation}
Consequently,
\begin{align*}
|I_{1,6}| &\lesssim  |h|^\gamma \seminorm{z_\alpha}_\gamma \norm{\mathcal{F}(z)}_{L^\infty} \norm{z}_{L^\infty} \norm{z_\alpha}_{L^\infty}  \norm{z_\alpha}_{\gamma} , \\
& \leq |h|^\gamma C(\gamma,M) .
\end{align*}
Collecting the estimates of the terms $I_{1,1} \dots I_{1,6}$, we obtain
\begin{equation}\label{I1est}
	|I_{1}| \leq |h|^\gamma C(\gamma,M) .
	\end{equation}
Using these ideas we can estimate the remaining terms. This concludes the proof of this step.

\textbf{Step 2: $N(z)$ is locally Lipschitz continuous;}
We have to prove that the operator $N$ defined as in \eqref{b1} is locally Lipschitz continuous in $C^{1,\gamma}(\TT)$, i.e., for any $z \in O^M$, there exists a constant $L>0$ such that 
 	\begin{equation}\label{lip}
 	\norm{N(z)-N(w))}_{C^{1,\gamma}} \leq L(\rho,\gamma,M) \norm{z-w}_{C^{1,\gamma}} \quad \forall \, w \in O^M.
 	\end{equation}
For the sake of brevity, we are going to show some of the estimates corresponding to 
	\begin{equation*}
\seminorm{N_\alpha(z)-N_\alpha(w))}_{\gamma}\leq L(\rho,\gamma,M) \norm{z-w}_{C^{1,\gamma}}.
\end{equation*}
	The previous statement is equivalent to prove that 
		\begin{equation}\label{lipest}
	\seminorm{(N_\alpha(z)(\alpha+h)-N_\alpha(w)(\alpha+h))-(N_\alpha(z)(\alpha)-N_\alpha(w)(\alpha))}\leq L(\rho,\gamma,M)  |h|^\gamma \norm{z-w}_{C^{1,\gamma}} .
	\end{equation}
As before, we are going to focus on the term $N_1$. From \eqref{dN1h}, we deduce that 
\begin{equation*}
	(\partial_\alpha N_1(z)(\alpha+h)-\partial_\alpha N_1(w)(\alpha+h))-(\partial_\alpha N_1(z)(\alpha)-\partial_\alpha N_1(w)(\alpha)) = \sum_{i=1}^5 (I_i(z)-I_i(w)).
\end{equation*}
Let us focus on the first term. The remaining terms can be estimated using the same tools and ideas. The first term can be split as follows:
\begin{equation*}
	I_1(z)-I_1(w) = J_1^1 + J_2^1 + J_3^1 + J_4^1 + J_5^1,
\end{equation*}
where 
\begin{align*}
	J_1^1 &= \int_\TT  \frac{( (z-w)_\alpha(\alpha+h)- (z-w)_\alpha(\alpha) )\cdot (\sin ( {z_1}_-(\alpha+h))  ,\sinh ( {z_2}_-(\alpha+h)) ) }{\beta^2 } \\
	& \quad\quad \times   \FF(z)(\alpha+h,\beta)  z_2(\alpha+h-\beta) z_\alpha^\perp (\alpha+h-\beta)d \beta,
\end{align*}
\begin{align*}	
	J_2^1 &= \int_\TT  \frac{ (\sin ( {z_1}_-(\alpha+h))-\sin ( {w_1}_-(\alpha+h))  ,\sinh ( {z_2}_-(\alpha+h))-\sinh ( {w_2}_-(\alpha+h)) ) }{\beta^2 }\\
	& \quad\quad \cdot ( w_\alpha(\alpha+h)- w_\alpha(\alpha) )   \FF(z)(\alpha+h,\beta)  z_2(\alpha+h-\beta) z_\alpha^\perp (\alpha+h-\beta)d \beta ,
\end{align*}
\begin{align*}
	J_3^1 &= \int_\TT  \frac{( w_\alpha(\alpha+h)- w_\alpha(\alpha) )\cdot (\sin ( {w_1}_-(\alpha+h))  ,\sinh(  {w_2}_-(\alpha+h)) ) }{\beta^2 } \\
	& \quad\quad \times   (\FF(z)(\alpha+h,\beta) -\FF(w)(\alpha+h,\beta)) z_2(\alpha+h-\beta) z_\alpha^\perp (\alpha+h-\beta)d \beta ,
\end{align*}
\begin{align*}
	J_4^1 &= \int_\TT  \frac{( w_\alpha(\alpha+h)- w_\alpha(\alpha) )\cdot (\sin ( {w_1}_-(\alpha+h) ) ,\sinh ( {w_2}_-(\alpha+h)) ) }{\beta^2 } \\
	& \quad\quad \times   \FF(w)(\alpha+h,\beta)  (z_2-w_2)(\alpha+h-\beta) z_\alpha^\perp (\alpha+h-\beta)d \beta ,
\end{align*}
and
\begin{align*}
	J_5^1 &= \int_\TT  \frac{( w_\alpha(\alpha+h)- w_\alpha(\alpha) )\cdot (\sin ( {w_1}_-(\alpha+h) ) ,\sinh ( {w_2}_-(\alpha+h)) ) }{\beta^2 }  \\
	&\quad\quad \times \FF(w)(\alpha+h,\beta)  w_2(\alpha+h-\beta) (z-w)_\alpha^\perp (\alpha+h-\beta)d \beta .
\end{align*}
The terms $J_1^1, J_4^1$ and $J_5^1$ can be estimated similarly to $I_1$ in Step 1. As a consequence we obtain the estimates 
\begin{equation*}
	|J_i^1| \leq L(\gamma,M) \norm{z-w}_{C^{1,\gamma}} \text{ for } i = 1,4,5.
\end{equation*}
We treat the terms $J_2^1, J_3^1$ separately, as the difference of the curves is not explicitly present in these terms. We split $J_2^1$ as follows: 

\begin{align*}
	J_2^1 &= \int_\TT  \frac{ (\sin  ({z_1}_-(\alpha+h))-\sin ( {w_1}_-(\alpha+h))  ,\sinh (  {z_2}_-(\alpha+h))-\sinh ( {w_2}_-(\alpha+h)) ) }{\beta^2 } \\
	& \quad \cdot ( w_\alpha(\alpha+h)- w_\alpha(\alpha) )  \FF(z)(\alpha+h,\beta)  z_2(\alpha+h-\beta) z_\alpha^\perp (\alpha+h-\beta)d \beta  \\
	& =  J_{2,1}^1 + J_{2,2}^1 + J_{2,3}^1 + J_{2,4}^1 + J_{2,5}^1 + J_{2,6}^1.
\end{align*}
We have that
\begin{align*}
		J_{2,1}^1 &= \\  \int_\TT &  \left[ (\sin ( {z_1}_-(\alpha+h) )-\sin ( {w_1}_-(\alpha+h))  ,\sinh ( {z_2}_-(\alpha+h))-\sinh ( {w_2}_-(\alpha+h)) ) - (z-w)_-(\alpha+h)\right] \\
	&\quad  \cdot( w_\alpha(\alpha+h)- w_\alpha(\alpha) )   \frac{\FF(z)(\alpha+h,\beta)  z_2(\alpha+h-\beta) z_\alpha^\perp (\alpha+h-\beta)}{\beta^2} d\beta
\end{align*}
\begin{align*}
	J_{2,2}^1 = \int_\TT    (z-w)_-(\alpha+h)&  \cdot  ( w_\alpha(\alpha+h)- w_\alpha(\alpha) )    \left( \FF(z)(\alpha+h,\beta) - \frac{2}{|z_\alpha(\alpha+h)|^2}   \right) \\
	&\times \frac{ z_2(\alpha+h-\beta) z_\alpha^\perp (\alpha+h-\beta)}{\beta^2}  d\beta ,
\end{align*}
\begin{align*}
	J_{2,3}^1 =  \frac{2}{|z_\alpha(\alpha+h)|^2} \int_\TT    (z-w)_-(\alpha+h)&  \cdot  ( w_\alpha(\alpha+h)- w_\alpha(\alpha) ) \\
	&\times   \frac{ (z_2(\alpha+h-\beta)-z_2(\alpha+h)) z_\alpha^\perp (\alpha+h-\beta)}{\beta^2}  d\beta ,
\end{align*}
\begin{align*}
	J_{2,4}^1 =  \frac{2z_2(\alpha+h)}{|z_\alpha(\alpha+h)|^2} \int_\TT   ( (z-w)_-(\alpha+h)& -\beta(z-w)_\alpha(\alpha+h)) \cdot  ( w_\alpha(\alpha+h)- w_\alpha(\alpha) )  \\
	&\times  \frac{  z_\alpha^\perp (\alpha+h-\beta)}{\beta^2}  d\beta ,
\end{align*}
\begin{align*}
	J_{2,5}^1 &=  \frac{2z_2(\alpha+h)}{|z_\alpha(\alpha+h)|^2} (z-w)_\alpha(\alpha+h) \cdot  ( w_\alpha(\alpha+h)- w_\alpha(\alpha) )  \int_\TT z_\alpha^\perp (\alpha+h-\beta)  \left(  \frac{  1}{\beta}-\cot(\beta) \right)  d\beta ,
\end{align*}
and
\begin{align*}
	J_{2,6}^1 &=  \frac{2z_2(\alpha+h)}{|z_\alpha(\alpha+h)|^2} (z-w)_\alpha(\alpha+h) \cdot  ( w_\alpha(\alpha+h)- w_\alpha(\alpha) )  2 \pi \mathcal{H}(z_\alpha^\perp) (\alpha+h)  .
\end{align*}
To estimate the term $J_{2,1}^1$, we use the integral Mean Value Theorem and the Taylor series expansion of the functions $\sin$ and $\sinh$ leading us to
\begin{align*}
&\seminorm{	(\sin ( {z_1}_-(\alpha+h))-\sin ( {w_1}_-(\alpha+h) ) ,\sinh ( {z_2}_-(\alpha+h))-\sinh ( {w_2}_-(\alpha+h)) ) - (z-w)_-(\alpha+h) } \\
&  \leq C(M)  |\beta|^3 \norm{ (z-w)_\alpha}_{L^\infty} .
\end{align*}
which implies 
\begin{align*}
	\seminorm{J_{2,1}^1} & \leq C(M) \norm{(z-w)_\alpha}_{L^\infty}  |h|^\gamma \seminorm{w_\alpha}_\gamma \norm{F(z)}_{L^\infty}  \norm{z}_{L^\infty}  \norm{z_\alpha}_{L^\infty}  \int_\TT |\beta| d\beta, \\
	& \leq L(\gamma.M) |h|^\gamma \norm{z-w}_{C^{1,\gamma}}.
\end{align*}
The terms $J_{2,2}^1 \dots J_{2,6}^1$ are estimated like the terms $I_{1,2} \dots I_{1,6}$. Note that they contain terms depending on the linear difference of the curves $z-w$, so consequently we obtain estimates of the form: 
\begin{equation*}
	|J_{2,i}^1| \leq L(\gamma,M) |h|^\gamma \norm{z-w}_{C^{1,\gamma}} \text{ for } i = 2 \dots 6.
\end{equation*}
We conclude that 
\begin{equation*}
		|J_{2}^1| \leq L(\gamma,M) |h|^\gamma \norm{z-w}_{C^{1,\gamma}} .
\end{equation*}

In order to estimate the term $J_3^1$ we have to use
\begin{align}
	&  \FF(z)(\cdot,\beta) -\FF(w)(\cdot,\beta)\nonumber \\
 &\quad	= \FF(z)(\cdot,\beta) \FF(w)(\cdot,\beta) \frac{\cosh( {w_2}_-(\cdot) )-\cos ({w_1}_-(\cdot) ) - \cosh( {z_2}_-(\cdot)) +\cos ({z_1}_-(\cdot)) }{\beta^2} ,  \label{zwdifarcchord}
\end{align}
which, together with
\begin{align*}
	&\seminorm{\cosh( {w_2}_-(\alpha+h)) -\cos( {w_1}_-(\alpha+h) ) - \cosh ({z_2}_-(\alpha+h)) +\cos( {z_1}_-(\alpha+h))} \\ &\leq C(M) |\beta|^2 \norm{(z-w)_\alpha}_{L^\infty}  ,
\end{align*}
lead to
\begin{equation*}
\seminorm{	 \FF(z)(\alpha+h,\beta) -\FF(w)(\alpha+h,\beta)} \leq C(M) \norm{(z-w)_\alpha}_{L^\infty}  .
\end{equation*}
Similarly, we find that
\begin{align*}
	   \left\vert \FF(z)(\alpha+h,\beta) -\frac{2}{\seminorm{z_\alpha(\alpha+h)}^2}-\FF(w)(\alpha+h,\beta) + \frac{2}{\seminorm{w_\alpha(\alpha+h)}^2} \right\vert \leq C(M) |\beta|^2 \norm{(z-w)_\alpha}_{L^\infty }.
\end{align*}
This leads to 
\begin{equation*}
	\seminorm{J_{3}^1} \leq L(\gamma,M) |h|^\gamma \norm{z-w}_{C^{1,\gamma}}.
\end{equation*}
As a consequence, we have that 
\begin{equation*}
|	I_1(z)-I_1(w)| \leq L(\gamma,M) |h|^\gamma \norm{z-w}_{C^{1,\gamma}}.
\end{equation*}

We can perform the same type of splitting for the other terms $I_i(z)-I_i(w)$. Then, the desired estimate is obtained using the same ideas. This concludes the proof of the result.
\end{proof}

\section{Global existence and decay of Sobolev solutions in the RT stable case}\label{section:globalexistence}
In this section we assume that the system is in the Rayleigh-Taylor stable case where the lighter fluid lies above the denser fluid, namely
\begin{equation*}
\rho^- - \rho^+ >0.
\end{equation*}
Then we prove the global existence and decay to equilibrium for small enough initial data  \begin{equation*}
	z(\alpha,0) = (\alpha,h(\alpha,0))
\end{equation*}
given as a graph in the Sobolev space $H^3$. In particular, the result we prove is the following 
\begin{theo}[Global existence and decay of solutions for small data] \label{stabilitythm}
Let $h_0\in H^3(\TT)$ be the initial data for \eqref{eq:grafo2} in the RT stable case,
\begin{equation*}
\rho^- - \rho^+>0,
\end{equation*}
and take $\frac{3}{2}<m<2$. There is a $0<\delta= \delta(\rho^- - \rho^+)$ such that if
$$
\norm{h_0}_{   H^{3}(\TT)}<\delta
$$ 
there exists a unique global classical solution $h(\alpha,t)$ 
$$
h\in C([0,T];H^3)
$$
of \eqref{eq:grafo2} for arbitrary $T>0$ satisfying
$$
(1+t)^{m} \norm{h}_{L^2}+ \norm{h}_{{  H}^3}\leq C \norm{h_0}_{H^3} 	
$$	
for some constant $C >0$.
\end{theo}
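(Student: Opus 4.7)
The strategy is to combine an \emph{a priori} $H^3$ energy estimate, which closes for small data thanks to the weak linear damping from Proposition \ref{PropDH}, with a Duhamel argument that transfers the algebraic decay of the linear semi-group to the full nonlinear equation. I first split \eqref{eq:grafo2} as
$$h_t = -\bar{\rho}\,\Lambda^{-1} h + \mathcal{N}(h), \qquad \bar{\rho}=\frac{\rho^- -\rho^+}{4},$$
where $\mathcal{N}(h)$ collects the remaining (at least quadratic) terms. A Taylor expansion of the logarithmic and rational kernels around $h=0$ shows that each piece of $\mathcal{N}(h)$ has the same singular integral structure as in Section \ref{section:contourdynamics}, controlled in terms of $\|h\|_{H^3}$ and the arc-chord function $\mathcal{F}(z)$. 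Since the zero Fourier mode is conserved by \eqref{eq:grafo2}, I work on the zero-mean part of $h_0$; the smallness hypothesis is unaffected.

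\emph{Step 1: $H^3$ a priori estimate and global existence.} Testing $\partial_\alpha^3$ of the equation against $\partial_\alpha^3 h$, the linear part contributes
$$-\bar{\rho}\sum_{k \neq 0} |k|^{5}|\hat h(k)|^2 = -\bar{\rho}\|h\|_{\dot H^{5/2}}^2.$$
The key nonlinear estimate takes the form
$$\bigl|\langle \partial_\alpha^3 h,\,\partial_\alpha^3 \mathcal{N}(h)\rangle\bigr|\;\lesssim\;\bigl(\|h\|_{H^3}+\|h\|_{H^3}^{2}\bigr)\,\|h\|_{H^{5/2}}^2,$$
obtained by the same commutator and cancellation techniques used for Proposition \ref{thmlocal}: one distributes the three derivatives carefully, exploits the anti-symmetry of the periodic Stokeslet under $\beta \mapsto -\beta$, and bounds the arc-chord factor by $\|h\|_{H^3}$ through Sobolev embedding. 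When $\|h_0\|_{H^3}$ is sufficiently small, this is absorbed into the dissipative term and yields
$$\tfrac{d}{dt}\|h\|_{H^3}^2 \le -\tfrac{\bar{\rho}}{2}\|h\|_{H^{5/2}}^2 \le 0,$$
so that $\|h(t)\|_{H^3} \le \|h_0\|_{H^3}$ on the whole existence interval. Combined with a local-in-time existence result in $H^3$ (a direct adaptation of Proposition \ref{thmlocal}, using that $H^3 \hookrightarrow C^{1,\gamma}$), this produces a unique global solution in $C([0,T];H^3)$ for every $T>0$.

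\emph{Step 2: $L^2$ decay via Duhamel.} Writing the mild formulation
$$h(t) = e^{-\bar{\rho}\,t\,\Lambda^{-1}}\,h_0 + \int_0^t e^{-\bar{\rho}(t-s)\Lambda^{-1}}\,\mathcal{N}(h)(s)\,ds,$$
I apply Proposition \ref{PropDH} with $s_0=3$ to the linear term to get $\|e^{-\bar{\rho}t\Lambda^{-1}}h_0\|_{L^2}\le C\|h_0\|_{H^3}(1+t)^{-m}$ for any $m<3$. For the Duhamel integrand, the quadratic structure of $\mathcal{N}$ combined with the $H^3$ bound from Step 1 gives, at an intermediate regularity $s_0'\in(m,3)$,
$$\|\mathcal{N}(h)(s)\|_{H^{s_0'}}\;\lesssim\;\|h(s)\|_{H^3}\,\|h(s)\|_{L^2},$$
so a second application of Proposition \ref{PropDH} inside the time integral produces a factor $(1+t-s)^{-m}$. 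A bootstrap ansatz $\|h(t)\|_{L^2}\le K\|h_0\|_{H^3}(1+t)^{-m}$ then closes as soon as the convolution bound
$$\int_0^t (1+t-s)^{-m}(1+s)^{-m}\,ds \;\le\; C(1+t)^{-m}$$
holds, which requires $m>1$. The compatibility between the regularity $s_0'<3$ available for $\mathcal{N}(h)$ and the decay rate extracted from Proposition \ref{PropDH} forces $m<2$, giving the admissible window $3/2<m<2$ stated in the theorem.

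\emph{Main obstacle.} The hard part is the $H^3$ estimate in Step 1: since the linear operator $-\bar{\rho}\Lambda^{-1}$ is only mildly dissipative (damping without regularization beyond the trivial smoothing of $\Lambda^{-1}$), every nonlinear contribution must be closed strictly at the critical level $H^{5/2}$, with no derivative to spare. This forces a coordinated use of the cancellation structure of the periodic Stokeslet, of the arc-chord bound, and of commutator identities, in the spirit of the energy estimates for the Muskat problem. Once those nonlinear estimates are in place, the Duhamel bookkeeping of Step 2 is comparatively routine.
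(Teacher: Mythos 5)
Your Step 1 is where the argument breaks. You claim the nonlinear bound
\begin{equation*}
\bigl|\langle \partial_\alpha^3 h,\,\partial_\alpha^3 \mathcal{N}(h)\rangle\bigr|\;\lesssim\;\bigl(\|h\|_{H^3}+\|h\|_{H^3}^{2}\bigr)\,\|h\|_{H^{5/2}}^2,
\end{equation*}
and then absorb the right-hand side into the dissipative term $-\bar\rho\|h\|_{\dot H^{5/2}}^2$ to conclude that $\|h(t)\|_{H^3}$ is monotone non-increasing. That estimate is not established, and it is in fact false for the terms at hand. Consider, e.g., the paper's term
\begin{equation*}
R_1^1 =  \int_\TT \partial_\alpha^3 h(\alpha) \left( \int_\TT \log \left(  1+\frac{\sinh^2(h_-(\alpha)/2)  }{\sin^2(\beta/2) }  \right)  \partial_\alpha^3 h(\alpha-\beta) \, d \beta \right) d \alpha.
\end{equation*}
The inner kernel is bounded (of size $O(\|h_\alpha\|_{L^\infty}^2)$ for small $h$) but it is not a smoothing operator, so this quantity is $\lesssim \|h_\alpha\|_{L^\infty}^2\|\partial_\alpha^3 h\|_{L^2}^2$, i.e.\ a small multiple of $\|h\|_{\dot H^3}^2$, and there is no mechanism here to trade $\|h\|_{\dot H^3}^2$ for $\|h\|_{\dot H^{5/2}}^2$. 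The linear operator only damps at the level $\dot H^{5/2}$ (half a derivative below $H^3$), whereas the nonlinearity genuinely costs the full $H^3$ norm squared, so a straight absorption argument cannot close. This is precisely the structural difficulty the paper circumvents: it does \emph{not} prove monotone decay of $\|h\|_{H^3}$, but instead bounds
\begin{equation*}
\frac{d}{dt}\|h\|_{H^3}^2 \leq (1+t)^{-2m/3}\,C(\rho)\,\mathcal{B}_2(\tnorm{h})\,\mathcal{P}_2(\tnorm{h}),
\end{equation*}
using the $L^2$ decay (through Gagliardo--Nirenberg interpolation) to make the right-hand side integrable in time, and then closes a single bootstrap on the coupled quantity $\tnorm{h} = \sup_{t}\left[(1+t)^m\|h\|_{L^2} + \|h\|_{H^3}\right]$. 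Your decoupled scheme (global $H^3$ bound first, then $L^2$ decay) omits exactly this coupling, which is the mechanism that makes the theorem work.

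A secondary issue: your explanation of the window $3/2 < m < 2$ does not hold up. In your setup the Duhamel convolution would only need $m>1$, and the invocation of ``compatibility'' for the lower bound is not a concrete constraint. In the paper, $m<2$ comes from applying Proposition \ref{PropDH} with $s_0=2$ (since the nonlinear terms are placed in $H^2$), while $m>3/2$ comes from requiring $(1+t)^{-2m/3}$ to be integrable so that the $H^3$ estimate closes over an infinite time horizon. Since your argument discards the $H^3$-side time-integrability requirement by (incorrectly) asserting monotone decay, the lower bound $3/2$ has no provenance in your proof.
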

\begin{proof}
\textbf{The linear part:}
 First we observe that, without lost of generality, we can consider the initial data $h_0(\alpha)$ having zero mean. Due to the fact that the equation for the case of a graph can be written as
\begin{equation*}
	\partial_t h(\alpha,t) d \alpha =  \partial_\alpha \psi(\alpha,h(\alpha,t)) 
\end{equation*}
where $\psi$ is the stream function, we have that the solution $h(\alpha,t)$ satisfies
\begin{equation*}
	\int_{\TT} h(\alpha,t) d \alpha = 0  \text{ for every } t >0.
\end{equation*} 
In this setting, equation \eqref{eq:grafo2} can be written as
\begin{equation}\label{isolatelinear}
	h_t(\alpha,t) =-\bar\rho \Lambda^{-1}(h)(\alpha,t) + \frac{\bar\rho}{2 \pi}(I_1(\alpha,t)+I_2(\alpha,t)+I_3(\alpha,t)+I_4(\alpha,t)) ,
\end{equation}
where
$$
\bar{\rho}=\rho^- - \rho^+,
$$
\begin{equation*}
	I_1(\alpha,t) =   \int_\TT \log \left(  1+\frac{\sinh^2\left(\frac{h_-(\alpha,t)}{2}\right)  }{\sin^2\left(\frac{\beta}{2}\right) }  \right) h(\alpha-\beta,t) d \beta ,
\end{equation*}
\begin{equation*}
	I_2(\alpha,t) = \int_\TT \log \left( \sinh^2\left(\frac{h_-(\alpha,t)}{2}\right) + \sin^2\left(\frac{\beta}{2}\right)     \right) h(\alpha-\beta,t) h_\alpha(\alpha,t) h_\alpha (\alpha-\beta,t) d\beta,
\end{equation*}
\begin{equation*}
	I_3(\alpha,t) =   \int_\TT  \frac{h(\alpha-\beta,t) h_-(\alpha,t) }{2\sinh^2\left(\frac{h_-(\alpha,t)}{2}\right) + 2\sin^2\left(\frac{\beta}{2}\right)  } \left[ (h_\alpha(\alpha,t)h_\alpha(\alpha-\beta,t)-1) \sinh(h_-(\alpha,t))\right] d\beta,
\end{equation*}
\begin{equation*}
	I_4(\alpha,t) =  \int_\TT \frac{h(\alpha-\beta,t) h_-(\alpha,t)}{2\sinh^2\left(\frac{h_-(\alpha,t)}{2}\right) +2 \sin^2\left(\frac{\beta}{2}\right)  } \left[(h_\alpha (\alpha,t) + h_\alpha (\alpha-\beta,t) ) \sin (\beta)   \right] d\beta.  
\end{equation*}
Above we use the notation $h_-(\alpha)= h(\alpha)-h(\alpha-\beta)$ as in previous sections. Define 
\begin{equation}\label{tnorm}
	\tnorm{h} = \sup_{t \in [0,T]} \left((1+t)^{m} \norm{h}_{L^2}+ \norm{h}_{{  H}^3}\right). 	
\end{equation}
The proof follows the ideas in \cite{cheng_well-posedness_2016}. Namely, our goal now is to obtain an inequality of the form
$$
\tnorm{h}\leq \mathcal{M}_0+F(\tnorm{h}),
$$
where $F$ is a smooth $O(x^k)$ function with $k>1$.

\textbf{The $L^2$ estimate:} In this subsection we prove a $L^2$ estimate of the interface $h(\alpha,t)$ by using Duhamel principle. Using Duhamel formula we obtain that
\begin{align*}
h(\alpha,t) = e^{- \bar\rho \Lambda^{-1} t} h_0(\alpha) + \frac{\bar\rho}{2 \pi}\int_0^t e^{- \bar\rho \Lambda^{-1} (t-s)} (I_1(\alpha,s)+I_2(\alpha,s)+I_3(\alpha,s)+I_4(\alpha,s)) ds.
\end{align*}
Then, by \eqref{lineardecay},
\begin{equation}\label{duhamel}
\norm{h}_{L^2} \lesssim (1+t)^{-m}\norm{h_0}_{ H^{2}} + \frac{\bar\rho}{2 \pi}\int_0^t (1+t-s)^{-m} \norm{ (I_1(s)+I_2(s)+I_3(s)+I_4(s)) }_{ H^{2}}ds
\end{equation}
with $m<2$ to be chosen later.

We first estimate each of the terms $I_1(\alpha,s)\dots I_4(\alpha,s)$ in ${ H}^2$.  Secondly, we use Gagliardo-Niremberg interpolation inequality, in order to get estimates in our norm, defined by \eqref{tnorm}. Some  basic inequalities we will use are: 
\begin{align*}
	\norm{\partial_\alpha^kh(t)}_{L^\infty} \lesssim \norm{h(t)}_{{ H}^3}^{(1+2k)/6} \norm{h(t)}_{L^2}^{(5-2k)/6} \lesssim (1+t)^{-m \frac{(5-2k)}{6}} \tnorm{h},\;0\leq k\leq 2,\\
	\norm{h(t)}_{{ H}^k} \lesssim \norm{h(t)}_{{ H}^3}^{k/3} \norm{h(t)}_{L^2}^{(3-k)/3} \lesssim (1+t)^{-m \frac{(3-k)}{3}} \tnorm{h},\;0\leq k\leq 3.			
\end{align*}
For the sake of simplicity, we will drop the time dependence in the notation when it does not cause any ambiguity. We start by estimating $I_1$ in ${{ H}^2}$. We have that
\begin{align*}
	\norm{I_1}_{{ H}^2} & \lesssim K_1^1 + K_2^1 + K_3^1,
\end{align*}
where
\begin{align*}
K_1^1 &= 	\norm{ \int_\TT \log \left(  1+\frac{\sinh^2\left(\frac{h_-(\alpha)}{2}\right)  }{\sin^2\left(\frac{\beta}{2}\right) }  \right) \partial_\alpha^2 h(\alpha-\beta) d \beta}_{L^2} ,\\
K_2^1 &=  \norm{  \int_\TT \partial_\alpha \log \left(  1+\frac{\sinh^2\left(\frac{h_-(\alpha)}{2}\right)  }{\sin^2\left(\frac{\beta}{2}\right) }  \right)h_\alpha(\alpha-\beta) d \beta}_{L^2},
\end{align*}
and
\begin{align*}
K_3^1= \norm{\int_\TT \partial_\alpha^2 \log \left(  1+\frac{\sinh^2\left(\frac{h_-(\alpha)}{2}\right)  }{\sin^2\left(\frac{\beta}{2}\right) }  \right)  h(\alpha-\beta) d \beta}_{L^2}.
\end{align*}

First of all,
\begin{equation*}
	 K_1^1 \lesssim \sup_{\alpha, \beta \in \TT} \seminorm{ \log \left(  1+\frac{\sinh^2\left(\frac{h_-(\alpha)}{2}\right)  }{\sin^2\left(\frac{\beta}{2}\right) }  \right) }  \norm{ h}_{{{ H}^2}}  .
\end{equation*}
Using the integral Mean Value Theorem in \eqref{meanvaluethm}, we can estimate the finite differences in $h$ as 
\begin{equation*}
\seminorm{ \partial_\alpha^k  \frac{h_-(\alpha)}{\beta}  } \lesssim \norm{\partial_\alpha^{k+1} h}_{L^\infty}.
\end{equation*}
This fact together with 
 \begin{equation*}
	\frac{\sinh(x)}{x} \leq \cosh(x), \quad \frac{\beta}{2 \sin \left(\frac{\beta}{2}\right)} \leq \frac{\pi}{2},
\end{equation*}
 imply
\begin{align*}
	\seminorm{\log \left(  1+\frac{\sinh^2\left(\frac{h_-(\alpha)}{2}\right)  }{\sin^2\left(\frac{\beta}{2}\right) }  \right)} & \leq  \seminorm{\frac{\sinh\left(\frac{h_-(\alpha)}{2}\right)  }{\sin\left(\frac{\beta}{2}  \right)}}^2 ,\\
	& \leq  \seminorm{\frac{\sinh\left(\frac{h_-(\alpha)}{2}\right)  }{\frac{h_-(\alpha)}{2}}}^2  \seminorm{\frac{h_-(\alpha) }{\beta}}^2 \seminorm{\frac{ \frac{\beta}{2}}{  \sin \left(\frac{\beta}{2}\right)  }}^2   \\
	& \lesssim \cosh^2(\norm{h}_{L^\infty}) \norm{h_\alpha}_{L^\infty}^2.
\end{align*}

The previous computations imply
\begin{equation}\label{K11}
	K_1^1 \lesssim \cosh^2(\norm{h}_{L^\infty}) \norm{h_\alpha}_{L^\infty}^2  \norm{ h}_{{{ H}^2}}. 
\end{equation}
Similarly,
\begin{equation}\label{K21}
	K_2^1 \lesssim \cosh(2\norm{h}_{L^\infty}) \norm{ h_\alpha}_{L^\infty}^2 \norm{h}_{{ H}^2}
\end{equation}
and
\begin{align}\label{K31}
K_3^1 & \lesssim  \cosh^2(2\norm{h}_{L^\infty} ) \norm{h}_{L^\infty} \left(  \norm{h}_{{ H}^3}  \norm{h_\alpha}_{L^\infty} + \norm{h}_{{ H}^2} \norm{\partial_\alpha^2 h}_{L^\infty}+   \norm{h}_{{ H}^2} \norm{\partial_\alpha^2 h}_{L^\infty} \norm{h_\alpha}_{L^\infty}^2  \right).
\end{align}

Now, we apply Gagliardo-Nirenberg interpolation inequalities to \eqref{K11}, \eqref{K21} and \eqref{K31}, which will produce powers of the energy multiplied by factors of the type $(1+s)^{-m\mu}$ for some $\mu >0$. As a consequence, we find the estimate
\begin{equation}\label{I1H2}
	\norm{I_1}_{{ H}^2}  \lesssim  (1+s)^{-m\frac{4}{3} }   \cosh^2(2\norm{h}_{L^\infty} ) \left( \tnorm{h}^3\ + \tnorm{h}^5 \right).
\end{equation}

Now, let us estimate the second term $I_2$ using the same techniques:
\begin{equation}\label{I2H2}
	\norm{I_2}_{{ H}^2}  \lesssim (1+s)^{-m \frac{4}{3}} \left( 1+\cosh^2(2\norm{h}_{L^\infty} )     \right) \left( \tnorm{h}^3 + \tnorm{h}^5 + \tnorm{h}^7\right).
\end{equation}
The term $I_3$ is similarly estimated. In this way we find that
\begin{equation}\label{I3H2}
	\norm{I_3 }_{{ H}^2} \lesssim  (1+s)^{-m \frac{4}{3}} \cosh^3(4\norm{h}_{L^\infty}) \left( \tnorm{h}^3 +\tnorm{h}^5 + \tnorm{h}^7+ \tnorm{h}^9\right) .
\end{equation}
Finally, 
\begin{equation}\label{I4H2}
	\norm{I_{4}}_{{ H}^2} \lesssim (1+s)^{-m \frac{4}{3}} \left(1+ \cosh^2(2 \norm{h}_{L^\infty})  \right) \left( \tnorm{h}^3 + \tnorm{h}^5 + \tnorm{h}^7 \right).
\end{equation}

In order to close the estimate in $L^2$, we need integrability in time in \eqref{duhamel}. Note that the norm in $B$ does not depend on time. For that purpose, we use Lemma 2.4 in \cite{elgindi_asymptotic_2017}, which establishes that, for $m, \eta >0$,
\begin{equation}\label{elgindi}
	\int_0^t (1+t-s)^{-m} (1+s)^{-1-\eta} \leq C(m,\eta) (1+t)^{-m}.
\end{equation}
Introducing estimates \eqref{I1H2}, \eqref{I2H2}, \eqref{I3H2} and \eqref{I4H2} in \eqref{duhamel} and using \eqref{elgindi}, we obtain that, for any $m>\frac{3}{4} $, it holds that
\begin{equation}\label{L2estimate}
	\norm{h}_{L^2} \leq (1+t)^{-m}\norm{h_0}_{ H^{2}} + C(1+t)^{-m} \mathcal{B}_1(\tnorm{h}) \mathcal{P}_1(\tnorm{h}),
\end{equation}
where $\mathcal P_1$ is a polynomial with monomials at least cubic and 
\begin{equation*}
	\mathcal{B}_1(x) = 1 + \cosh^3(4x).
\end{equation*}

\textbf{The $H^3$ estimate:}
We compute that
\begin{align*}
\frac{1}{2}\frac{d}{dt} \norm{h(t)}_{ H^3}^2 & =  -\bar{\rho} \norm{h(t)}^2_{H^{2.5}} + \frac{\bar{\rho}}{2\pi}  \int_{\TT} \partial_\alpha^3 h(\alpha,t) \partial_\alpha^3\left( I_1(\alpha,t) + I_2(\alpha,t) + I_3(\alpha,t) + I_4(\alpha,t) \right) d \alpha.
\end{align*}
Then, 
\begin{align*}
	\frac{d}{dt} \norm{h}_{ H^3}^2 & \lesssim  \frac{\bar{\rho}}{2\pi}  \int_{\TT} \partial_\alpha^3 h(\alpha) \partial_\alpha^3\left( I_1(\alpha) + I_2(\alpha) + I_3(\alpha) + I_4(\alpha) \right) d \alpha.
\end{align*}
The aim is to estimate each of the four terms in $ H^3$. The strategy to bound most of the terms is similar to the one in \cite{cordoba_contour_2007}.

For the sake of brevity we will estimate in detail the term corresponding to the first integral. Denote 
\begin{equation*}
	 \int_\TT \partial_\alpha^3 h(\alpha) \partial_\alpha^3 I_1(\alpha) d \alpha = R_1^1 + R_2^1 + R_3^1 + R_4^1,
\end{equation*}
where 
\begin{align*}
	R_1^1 =   \int_\TT \partial_\alpha^3 h(\alpha) \left( \int_\TT \log \left(  1+\frac{\sinh^2\left(\frac{h_-(\alpha)}{2}\right)  }{\sin^2\left(\frac{\beta}{2}\right) }  \right)  \partial_\alpha^3 h(\alpha-\beta) d \beta \right) d \alpha
\end{align*}
\begin{align*}
	R_2^1 = 3   \int_\TT \partial_\alpha^3 h(\alpha)  \left( \int_\TT \partial_\alpha  \log \left(  1+\frac{\sinh^2\left(\frac{h_-(\alpha)}{2}\right)  }{\sin^2\left(\frac{\beta}{2}\right) }  \right)  \partial_\alpha^2 h(\alpha-\beta) d \beta\right) d \alpha,
\end{align*}
\begin{align*}
R_3^1=  3   \int_\TT \partial_\alpha^3 h(\alpha) \left(\int_\TT \partial_\alpha^2 \log \left(  1+\frac{\sinh^2\left(\frac{h_-(\alpha)}{2}\right)  }{\sin^2\left(\frac{\beta}{2}\right) }  \right)  \partial_\alpha h(\alpha-\beta) d \beta \right)d \alpha,
\end{align*}
and
\begin{align*}
R_4^1 =   \int_\TT \partial_\alpha^3 h(\alpha)\left(   \int_\TT \partial_\alpha^3 \log \left(  1+\frac{\sinh^2\left(\frac{h_-(\alpha)}{2}\right)  }{\sin^2\left(\frac{\beta}{2}\right) }  \right)   h(\alpha-\beta) d \beta\right) d \alpha .
\end{align*}

Similarly to the estimates performed for the $L^2$ estimate of the interface, we find that 
\begin{align*}
	R_1^1 & \lesssim \norm{h}_{{ H}^3}^2 \cosh^2(\norm{h}_{L^\infty}) \norm{h_\alpha}^2_{L^\infty} \\
	& \lesssim (1+t)^{-m} \cosh^2(\tnorm{h}) \tnorm{h}^4,
\end{align*}
\begin{align*}
	R_2^1 & \lesssim \norm{h}_{{ H}^3} \norm{h}_{{ H}^2} \cosh(\norm{h}_{L^\infty}) \norm{\partial_\alpha^2 h}_{L^\infty} \norm{h_\alpha}_{L^\infty} \\
	& \lesssim (1+t)^{-m} \cosh(\tnorm{h}) \tnorm{h}^4,
\end{align*}
\begin{align*}
	R_3^1 
	& \lesssim (1+t)^{-m} \cosh^2(\tnorm{h}) \left( \tnorm{h}^4 + \tnorm{h}^6 \right).
\end{align*}

The term $R_4^1$ is more singular. We split it into parts by computing the derivative   $$\partial_\alpha^3 \log \left(1+ \frac{\sinh^2(h_-(\alpha)/2)}{\sin^2(\beta/2)} \right)$$ explicitly:
\begin{align*}
	R_4^1 = R_{4,1}^1 + R_{4,2}^1 + R_{4,3}^1 + R_{4,4}^1 + R_{4,5}^1 + R_{4,6}^1 + R_{4,7}^1 + R_{4,8}^1,
\end{align*}
with
\begin{align*}
	R_{4,1}^1 & =\int_{\TT} \int_{\TT} \partial_\alpha^3 h(\alpha)   \frac{(\partial_\alpha^3 h)_-(\alpha)}{2} \frac{\sinh(h_-(\alpha))}{\sinh^2 \left( \frac{h_-(\alpha)}{2} \right)+ \sin^2\left(\frac{\beta}{2}\right)} h(\alpha-\beta) d \beta d \alpha
\end{align*}
\begin{align*}
	R_{4,2}^1& = \int_{\TT} \int_{\TT}  \partial_\alpha^3 h(\alpha)  3\frac{(\partial_\alpha^2 h)_-(\alpha) ( h_\alpha)_-(\alpha)}{2} \frac{\cosh(h_-(\alpha))}{\sinh^2 \left( \frac{h_-(\alpha)}{2} \right)+ \sin^2\left(\frac{\beta}{2}\right)} h(\alpha-\beta) d \beta d \alpha
\end{align*}
\begin{align*}
	R_{4,3}^1 & =- \int_{\TT} \int_{\TT}  \partial_\alpha^3 h(\alpha)  \frac{(\partial_\alpha^2 h)_-(\alpha) ( h_\alpha)_-(\alpha)}{4} \frac{\sinh^2(h_-(\alpha))}{ \left( \sinh^2 \left( \frac{h_-(\alpha)}{2} \right)+ \sin^2\left(\frac{\beta}{2}\right) \right)^2} h(\alpha-\beta) d \beta d \alpha
\end{align*}
\begin{align*}
	R_{4,4}^1 & =\int_{\TT} \int_{\TT}   \partial_\alpha^3 h(\alpha) \frac{((h_\alpha)_-(\alpha))^3}{2} \frac{\sinh(h_-(\alpha))}{\sinh^2 \left( \frac{h_-(\alpha)}{2} \right)+ \sin^2\left(\frac{\beta}{2}\right)}h(\alpha-\beta) d \beta d \alpha
\end{align*}
\begin{align*}
	R_{4,5}^1 & = -\int_{\TT} \int_{\TT}  \partial_\alpha^3 h(\alpha) \frac{((h_\alpha)_-(\alpha))^3}{4} \frac{\cosh(h_-(\alpha))\sinh(h_-(\alpha))}{\left( \sinh^2 \left( \frac{h_-(\alpha)}{2} \right)+ \sin^2\left(\frac{\beta}{2}\right) \right)^2}h(\alpha-\beta) d \beta d \alpha
\end{align*}
\begin{align*}
	R_{4,6}^1 & = - \int_{\TT} \int_{\TT}  \partial_\alpha^3 h(\alpha)  (h_\alpha)_-(\alpha) (\partial_\alpha^2 h)_-(\alpha)  \frac{\sinh^2(h_-(\alpha))}{\left( \sinh^2 \left( \frac{h_-(\alpha)}{2} \right)+ \sin^2\left(\frac{\beta}{2}\right) \right)^2}h(\alpha-\beta) d \beta d \alpha
\end{align*}
\begin{align*}
	R_{4,7}^1 & = -\int_{\TT} \int_{\TT}  \partial_\alpha^3 h(\alpha)   \frac{((h_\alpha)_-(\alpha))^3}{2}  \frac{\sinh(2h_-(\alpha))}{\left( \sinh^2 \left( \frac{h_-(\alpha)}{2} \right)+ \sin^2\left(\frac{\beta}{2}\right) \right)^2}h(\alpha-\beta) d \beta d \alpha
\end{align*}
\begin{align*}
	R_{4,8}^1& =\int_{\TT} \int_{\TT}   \partial_\alpha^3 h(\alpha)  \frac{((h_\alpha)_-(\alpha))^3}{2}  \frac{\sinh^3(h_-(\alpha))}{\left( \sinh^2 \left( \frac{h_-(\alpha)}{2} \right)+ \sin^2\left(\frac{\beta}{2}\right) \right)^3}h(\alpha-\beta) d \beta d \alpha
\end{align*}
We focus on the most singular term is $R_{4,1}^1$. We split $R_{4,1}^1$ into parts, in order to localize the singularity:
\begin{align*}
	R_{4,1}^1 = R_{4,1,1}^1 + R_{4,1,2}^1 + R_{4,1,3}^1 +R_{4,1,4}^1 +R_{4,1,5}^1 +R_{4,1,6}^1, 
\end{align*}
where 
\begin{align*}
	R_{4,1,1}^1 &= \int_{\TT} \int_{\TT} \partial_\alpha^3 h(\alpha)   \frac{(\partial_\alpha^3 h)_-(\alpha)}{2} \frac{\sinh(h_-(\alpha))-h_-(\alpha)}{\sinh^2 \left( \frac{h_-(\alpha)}{2} \right)+ \sin^2\left(\frac{\beta}{2}\right)} h(\alpha-\beta) d \beta  d \alpha \\
	& \lesssim \norm{h}_{{ H}^3}^2 \cosh(2 \norm{h}_{L^\infty}) \norm{h_\alpha}_{L^\infty}^2 \norm{h}_{L^\infty}^2.
\end{align*}
Similarly, using
$$
\left(\frac{\beta^2}{\sinh^2 \Big( \frac{h_-(\alpha)}{2} \Big)+ \sin^2\left(\frac{\beta}{2}\right)} - \frac{4}{1+ |h_\alpha(\alpha)|^2}\right)\lesssim \norm{h_\alpha}_{L^\infty}\norm{h_{\alpha\alpha}}_{L^\infty}\beta,
$$
we find that
\begin{align*}
	R_{4,1,2}^1 &= \int_{\TT} \int_{\TT} \partial_\alpha^3 h(\alpha)   \frac{(\partial_\alpha^3 h)_-(\alpha)}{2} \frac{h_-(\alpha)}{\beta^2} \left( \frac{\beta^2}{\sinh^2 \left( \frac{h_-(\alpha)}{2} \right)+ \sin^2\left(\frac{\beta}{2}\right)} - \frac{4}{1+ |h_\alpha(\alpha)|^2}\right) h(\alpha-\beta) d \beta d \alpha \\
	& \lesssim \norm{h_{\alpha\alpha}}_{L^\infty}\norm{h}_{{ H}^3}^2 \norm{h_\alpha}_{L^\infty}^2\norm{h}_{L^\infty}.
\end{align*}
We compute
\begin{align*}
	R_{4,1,3}^1 &= \int_{\TT} \int_{\TT}  \frac{4}{1+ |h_\alpha(\alpha)|^2} \partial_\alpha^3 h(\alpha)   \frac{(\partial_\alpha^3 h)_-(\alpha)}{2} \frac{h_-(\alpha)-\beta h_\alpha(\alpha)}{\beta^2}  h(\alpha-\beta) d \beta d \alpha \\
	& \lesssim  \norm{h}_{{ H}^3}^2 \norm{h}_{{\dot C}^2} \norm{h}_{L^\infty}.
\end{align*}
Furthermore, we can continue as follows
\begin{align*}
	R_{4,1,4}^1 &= \int_{\TT} \int_{\TT}  \frac{4}{1+ |h_\alpha(\alpha)|^2} \partial_\alpha^3 h(\alpha)   \frac{\partial_\alpha^3 h(\alpha)}{2}  h_\alpha(\alpha) h(\alpha-\beta) \left(\frac{1}{\beta} - \cot(\beta) \right) d \beta d \alpha \\
	& \quad + \int_{\TT} \int_{\TT}  \frac{4}{1+ |h_\alpha(\alpha)|^2} \partial_\alpha^3 h(\alpha)   \frac{\partial_\alpha^3 h(\alpha-\beta)}{2}  h_\alpha(\alpha) \frac{h(\alpha-\beta)-h(\alpha)}{\beta}d \beta d \alpha \\
	& \lesssim   \norm{h}_{{ H}^3}^2 \norm{h_\alpha}_{L^\infty} \norm{h}_{L^\infty} + \norm{h}_{{ H}^3}^2 \norm{h_\alpha}_{L^\infty}^2 
\end{align*}
and
\begin{align*}
	R_{4,1,5}^1 &= \int_{\TT} 2 \pi  \frac{4}{1+ |h_\alpha(\alpha)|^2} \partial_\alpha^3 h(\alpha)   \frac{\partial_\alpha^3 h(\alpha)}{2}  h_\alpha(\alpha) \mathcal{H}(h)(\alpha)  d \alpha \\
	& \quad + \int_{\TT} \int_{\TT}  \frac{4}{1+ |h_\alpha(\alpha)|^2} \partial_\alpha^3 h(\alpha)   \frac{\partial_\alpha^3 h(\alpha-\beta)}{2}  h_\alpha(\alpha) h(\alpha) \left( \frac{1}{\beta}- \cot(\beta) \right)d \beta d \alpha  \\
	& \lesssim \norm{h}_{{ H}^3}^2 \norm{h_\alpha}_{L^\infty} \norm{h}_{{{ H}^1}} + \norm{h}_{{ H}^3}^2 \norm{h_\alpha}_{L^\infty} \norm{h}_{L^\infty}. 
\end{align*}
Finally, we have that
\begin{align*}
	R_{4,1,6}^1 &= \int_{\TT} 2 \pi \frac{4}{1+ |h_\alpha(\alpha)|^2} \partial_\alpha^3 h(\alpha)   \frac{ \mathcal{H}(\partial_\alpha^3 h)(\alpha)}{2}  h_\alpha(\alpha) h(\alpha) d \alpha \\
	& \lesssim \norm{h}_{{ H}^3}^2 \norm{h_\alpha}_{L^\infty} \norm{h}_{L^\infty}.
\end{align*}
Consequently, 
\begin{equation*}
	R_4^1 \lesssim (1+t)^{-m} \mathcal{B}(\norm{h}_{L^\infty})  \tnorm{h} \mathcal{P}(\tnorm{h}),
\end{equation*}
and, as a consequence, 
\begin{equation*}
	\int_\TT \partial_\alpha^3 h(\alpha) \partial_\alpha^3 I_1(\alpha) d \alpha \lesssim  (1+t)^{-m} \mathcal{B}(\norm{h}_{L^\infty}) \tnorm{h} \mathcal{P} (\tnorm{h}).
\end{equation*}
The terms corresponding to the remaining integrals $I_2,I_3$ and $I_4$ can be estimated following the previous procedure and we conclude
\begin{equation}\label{H3estimate}
		\frac{d}{dt} \norm{h}_{ H^3}^2 \leq (1+t)^{-m \frac{2}{3}} C(\rho) \mathcal{B}_2(\tnorm{h})  \mathcal{P}_2 (\tnorm{h}),
\end{equation}
where  $\mathcal{P}_2$ is a polynomial with monomials with degree at least four and 
\begin{equation*}
	\mathcal{B}_2(x) = 1+\cosh^3(4x).
\end{equation*}
Equipped with this regularity and using standard energy arguments (see  \cite{gancedo_well-posedness_2021} section 6 for details), we conclude the continuity in time of $h$ in $H^3$.

\textbf{Global existence:} Collecting estimates \eqref{L2estimate} and the integrated (in time) version of \eqref{H3estimate}, we find 
\begin{equation}\label{tnormbound}
	\tnorm{h} \leq 2\norm{h_0}_{{ H}^3} + C(\rho) \mathcal{B}(\tnorm{h})  \mathcal{P} (\tnorm{h}),
\end{equation}
where $\mathcal{P}$ is a polynomial with monomials of degree at least 3, and 
\begin{equation*}
	\mathcal{B}(x) = 1+ \cosh^3(4x) .  
\end{equation*}
Let the initial data be such that
\begin{equation*}
	\norm{h_0}_{{ H}^3} < \delta.
\end{equation*}
Now we prove that, if $\delta$ is small enough, we find that
\begin{equation*}
	\tnorm{h} < 4 \delta.
\end{equation*}
We argue by contradiction. Assume that the solution reaches the value $\tnorm{h} = 4 \delta$ at certain time $t=T$. Then, by \eqref{tnormbound},
\begin{equation*}
	4 \delta  \leq 2\delta + C(\rho) \mathcal{B}(4 \delta )  \mathcal{P} (4\delta).
\end{equation*}
Using the smallness conditions of $\delta$ and the fact that $\mathcal{B}$ a monotonic increasing function, we find 
\begin{align*}
		4 \delta & < 2\delta + C\mathcal{B}(4\delta)  16\delta^2,\quad \mbox{and therefore} \quad 2\delta< C\mathcal{B}(1)16\delta^2,
\end{align*}
which is a contradiction if 
$$
\delta<\min\{(C\mathcal{B}(1)8)^{-1},4^{-1}\}.
$$
Consequently, 
\begin{equation*}
	\tnorm{h} < 4 \delta.
\end{equation*}
Finally, we can find a big enough constant such that 
$$ C \norm{h_0}_{H^3} \ge \delta,$$
thus 
$$ 4 \delta \leq 4C \norm{h_0}_{H^3}.$$
Therefore, abusing of notation for an arbitrary constant $C$, we have shown that as long as the initial data is small enough, we can find a classical solution to \eqref{eq:grafo2} such that 
\begin{equation}
	\tnorm{h} = \sup_{t \in [0,T]} \left( (1+t)^m \norm{h}_{L^2} + \norm{h}_{{ H}^3}  \right) \leq C \norm{h_0}_{H^3}
\end{equation}
for every $T>0$.
\end{proof}

\section{Global existence and exponential decay of analytic solutions  in the RT stable case}
This section is devoted to prove global well-posedness and exponential decay of analytic solutions to \eqref{eq:grafo2}. For that purpose, we will exploit the algebra structure of Wiener spaces $A^s_\nu$ (see \eqref{wiener} in Section \ref{section:linearoperator} for the definition of these spaces). We assume the RT stable scenario, where 
$$ \bar\rho=\frac{\rho^--\rho^+}{4} >0.$$
In this case, we will prove global existence and exponential decay of solutions for small enough initial data, given as a graph in a suitable Wiener space $$h_0 \in A^1_{\nu_0}(\TT), \text{ with } \nu_0 >0.$$ We recall that we can assume that the initial data has zero mean without loss of generality, and this property is propagated thanks to the existence of a stream function $\psi$. In particular, we prove the following result:
\begin{theo}[Global existence of solutions in Wiener algebras for small data] \label{globalwiener}
	Let $h_0\in {A}_{\nu_0}^1(\TT)$ be the initial data for \eqref{eq:grafo2} in the RT stable case,
	$$
\rho^- - \rho^+ >0.	
	$$ Assume that 
	\begin{equation*}
 \nu_0 >0.
	\end{equation*}
	There is a $0<\delta = \delta(\rho^--\rho^+,\nu_0)$ such that if
	$$
	\norm{h_0}_{  {A}_{\nu_0}^1}<\delta
	$$ 
	so that 
	$$ \nu_0 - \mathcal{M}(\norm{h_0}_{A_{\nu_0}^1}) >0 $$
	for a suitable non negative function $\mathcal{M}(x) \approx x + O(x^2) ,$
	there exists a unique global analytic solution $h(\alpha,t)$ 
	$$
	h\in  C([0,T];{A}_{\nu(t)}^1(\TT))
	$$
	of \eqref{eq:grafo2}  for arbitrary $T>0$ satisfying
	$$
	\norm{h}_{  {A}_{\nu(t)}^1        }\leq \norm{h_0}_{A^1_{\nu_0}}. 	
	$$	
\end{theo}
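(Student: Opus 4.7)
The natural setting is the Fourier side together with the algebra structure of the Wiener spaces: for mean-zero functions one has the tame estimate $\|fg\|_{A^1_\nu}\leq \|f\|_{A^0_\nu}\|g\|_{A^1_\nu}+\|f\|_{A^1_\nu}\|g\|_{A^0_\nu}$. The mean-zero property of $h$ is preserved by the flow (equation \eqref{eq:grafo2} can be written as $\partial_\alpha \psi(\alpha,h(\alpha,t))$), so I may assume $\hat h(0,t)=0$. Writing \eqref{isolatelinear} modewise, for $k\neq 0$,
$$\hat h_t(k,t)=-\frac{\bar\rho}{|k|}\hat h(k,t)+\widehat{N(h)}(k,t),\qquad N(h):=\frac{1}{2\pi}(I_1+I_2+I_3+I_4),$$
with $\bar\rho=\rho^--\rho^+$. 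Combining $\frac{d}{dt}|\hat h(k,t)|\leq -\bar\rho|k|^{-1}|\hat h(k,t)|+|\widehat{N(h)}(k,t)|$ with the time differentiation of the radius $\nu(t)$ yields the key differential inequality
$$\frac{d}{dt}\|h\|_{A^1_{\nu(t)}}+\bar\rho\|h\|_{A^0_{\nu(t)}}\leq \dot\nu(t)\|h\|_{A^2_{\nu(t)}}+\|N(h)\|_{A^1_{\nu(t)}}.$$

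\textbf{Nonlinear estimate.} The central task is to prove, for some non-decreasing analytic $G$ with $G(0)<\infty$,
$$\|N(h)\|_{A^1_\nu}\leq G\bigl(\|h\|_{A^1_\nu}\bigr)\,\|h\|_{A^0_\nu}\|h\|_{A^2_\nu}.$$
To achieve this I would Taylor-expand each analytic building block: $\sinh$, $\cosh$, $\sin$, $\cos$, and $\log(1+\cdot)$ as power series in $h_-=h(\alpha)-h(\alpha-\beta)$; the arc-chord denominator is written as $\sin^2(\beta/2)\sum_{j\geq 0}(-\sinh^2(h_-/2)/\sin^2(\beta/2))^j$ via Neumann series. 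This reduces each $I_j$ to a convergent sum of multilinear expressions of the schematic form
$$\int_\TT \frac{h_-(\alpha)^a\,h_\alpha(\alpha)^b\,h_\alpha(\alpha-\beta)^c}{\sin^{2d}(\beta/2)}\,h(\alpha-\beta)\,d\beta,$$
and the nonlocal convolutions against $\sin^{-2d}(\beta/2)$ can be recognized, by explicit Fourier computation, as compositions of $\Lambda^{-1}$, $\mathcal H$, and difference operators, all bounded on $A^s_\nu$. Then I would bound each monomial in $A^1_\nu$ by repeated application of the Wiener-algebra inequality, distributing the derivatives so that one factor carries the full $A^2_\nu$ weight, one factor (there is always a plain $h$ available, or a $h_-/\beta$ term for which low-frequency placement is possible) carries only the $A^0_\nu$ weight, and all remaining factors are absorbed into $G(\|h\|_{A^1_\nu})$; analyticity of the building blocks guarantees that the series of coefficients sums to an analytic $G$.

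\textbf{Closing the bootstrap.} With this estimate in hand, set
$$\dot\nu(t):=-G\bigl(\|h\|_{A^1_{\nu(t)}}\bigr)\|h\|_{A^0_{\nu(t)}},$$
so that $\dot\nu(t)\|h\|_{A^2_{\nu(t)}}+\|N(h)\|_{A^1_{\nu(t)}}\leq 0$. The differential inequality collapses to
$$\frac{d}{dt}\|h\|_{A^1_{\nu(t)}}+\bar\rho\|h\|_{A^0_{\nu(t)}}\leq 0,$$
which integrates to $\|h(t)\|_{A^1_{\nu(t)}}\leq\|h_0\|_{A^1_{\nu_0}}$ and $\int_0^\infty\|h(s)\|_{A^0_{\nu(s)}}\,ds\leq \|h_0\|_{A^1_{\nu_0}}/\bar\rho$. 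Consequently,
$$\nu_0-\nu(t)\leq G\bigl(\|h_0\|_{A^1_{\nu_0}}\bigr)\int_0^\infty\|h(s)\|_{A^0_{\nu(s)}}\,ds\leq \mathcal{M}(\|h_0\|_{A^1_{\nu_0}}),$$
with $\mathcal{M}(x):=x\,G(x)/\bar\rho$. The hypothesis $\nu_0-\mathcal{M}(\|h_0\|_{A^1_{\nu_0}})>0$ then guarantees $\nu(t)>0$ for all $t\geq 0$, and a standard continuation argument starting from the local solution provided by Proposition \ref{thmlocal} upgrades this a priori bound to global existence. Uniqueness follows by running the same algebra-based estimates on the difference $h-\tilde h$ of two solutions, using the Lipschitz character of $N$ on bounded sets of $A^1_\nu$, and continuity in time in $A^1_{\nu(t)}$ is obtained by a standard mollification argument from the uniform bound.

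\textbf{Main obstacle.} The hard part is the nonlinear estimate, specifically for $I_3$ and $I_4$, where the geometric-series expansion of the arc-chord denominator produces an infinite cascade of multilinear nonlocal operators. Checking that each operator in the cascade is bounded on $A^s_\nu$ with the correct derivative placement (one factor in $A^0_\nu$ and one in $A^2_\nu$), and that all the combinatorial coefficients sum to an analytic function $G$ with $G(0)<\infty$, is the main technical burden and requires careful Fourier analysis of the singular kernels $\sin^{-2d}(\beta/2)$ together with Leibniz-type bookkeeping.
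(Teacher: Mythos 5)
Your proposal is correct and follows essentially the same route as the paper: Fourier side, Wiener-algebra tame estimates, Taylor-expanding the analytic nonlinearities, a differential inequality for $\|h\|_{A^1_{\nu(t)}}$ with a term $\dot\nu(t)\|h\|_{A^2_{\nu(t)}}$, and a choice $\dot\nu\propto-\|h\|_{A^0_{\nu(t)}}$ that absorbs the nonlinearity, after which the damping term $-\bar\rho\|h\|_{A^0}$ makes the total analyticity loss $\int_0^\infty|\dot\nu|\,ds$ finite and small. The only noticeable difference is cosmetic: the paper tracks both $\|h\|_{A^0_{\nu(t)}}$ and $\|h\|_{A^1_{\nu(t)}}$ with a $\max_{i=1,2}$ in the definition of $\nu'$ (partly because the $A^0$-level estimates get reused in the subsequent exponential-decay theorem), whereas you run only the $A^1$ inequality; for mean-zero data $\|h\|_{A^0_\nu}\le\|h\|_{A^1_\nu}$, so your leaner version suffices here. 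One small technical remark on the nonlinear estimate: in the paper there is no need to interpret the $\sin^{-2d}(\beta/2)$ kernels as $\Lambda^{-1}$ or $\mathcal H$ (except for the explicit $\log\sin^2(\beta/2)$ piece of $I_2$); each $h_-$ or $\sinh(h_-/2)$ or $\sin\beta$ factor contributes a power of $\beta$ through $m(k,\beta)=-ik\beta\int_0^1 e^{-ik\beta(1-s)}\,ds$, so the $\beta$-integrals are pointwise bounded and can be taken directly, which is a bit more elementary than the operator-composition viewpoint you propose, though both work.
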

The details of the proof are shown in Subsection \ref{proof1}. Moreover, we can prove that, assuming the hypotheses of the previous theorem and certain extra conditions, we can get global existence and exponential decay of solutions in $A^0_{\nu^\ast}$, for a suitable $\nu^\ast < \nu_0$. This result is captured in the following statement:

\begin{theo}[Global existence and exponential decay of solutions for small data] \label{exponentialdecay}
	Let $h_0\in A^1_{\nu_0}(\TT)$ be the initial data for \eqref{eq:grafo2} in the RT stable case,
	$$
\rho^- - \rho^+ >0.	
	$$
	fulfilling the hypotheses of Theorem \ref{globalwiener}. In particular,
	$$ \norm{h_0}_{A^1_{\nu_0}} < \delta$$
	for a suitable $\delta =\delta(\rho^--\rho^+,\nu_0)$. Assume that 
	\begin{equation*}
		 0<\nu^\ast\leq \frac{\nu_0}{24}.
	\end{equation*}
	There is a $0< \varepsilon =  \varepsilon(\nu^\ast, \rho^--\rho^+,\norm{h_0}_{A^1_{\nu_0}}  )$ such that if
	$$
	\norm{h_0}_{ {A}^0_{\nu^\ast}  }<\varepsilon
	$$ 
	there exists a unique global analytic solution $h(\alpha,t)$ 
	$$
	h\in C([0,T]; {A}^0_{\nu^\ast} (\TT)  )
	$$
	of \eqref{eq:grafo2}  for arbitrary $T>0$ satisfying
	$$
	e^{\sqrt{((\rho^--\rho^+)/4) \nu^\ast t}}\norm{h}_{{A}^0}+ \norm{h}_{ {A}^0_{\nu^\ast}   }\leq C \norm{h_0}_{A^0_{\nu^\ast}}
	$$	
	for some constant $C>0$.
\end{theo}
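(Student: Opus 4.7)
The plan is to combine the uniform-in-time $A^1_{\nu(t)}$ control provided by Theorem \ref{globalwiener} with the linear exponential decay of Proposition \ref{PropDH} through a Duhamel and bootstrap argument, structurally analogous to the Sobolev proof of Theorem \ref{stabilitythm} but with the algebra structure of Wiener spaces replacing the Gagliardo--Nirenberg interpolations. I introduce the time-weighted norm
\begin{equation*}
\tnorm{h}_T := \sup_{t\in[0,T]}\Bigl(e^{\sqrt{\bar\rho\nu^\ast t}}\|h(t)\|_{A^0} + \|h(t)\|_{A^0_{\nu^\ast}}\Bigr),\qquad \bar\rho:=\tfrac{\rho^--\rho^+}{4},
\end{equation*}
write $\delta:=\|h_0\|_{A^1_{\nu_0}}$, and aim at an estimate of the form $\tnorm{h}_T\leq C\|h_0\|_{A^0_{\nu^\ast}}+\Phi(\tnorm{h}_T,\delta)$ with a superlinear perturbation $\Phi$ that can be absorbed into the left-hand side for $\|h_0\|_{A^0_{\nu^\ast}}<\varepsilon$ small.

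As in Section \ref{section:globalexistence}, I split \eqref{eq:grafo2} as $h_t=-\bar\rho\Lambda^{-1}h+R(h)$ with $R(h)=\frac{\bar\rho}{2\pi}(I_1+I_2+I_3+I_4)$ and apply Duhamel's formula. The linear contribution is controlled directly by Proposition \ref{PropDH}, which gives both the exponential decay $\|e^{-\bar\rho\Lambda^{-1}t}h_0\|_{A^0}\leq \|h_0\|_{A^0_{\nu^\ast}}e^{-\sqrt{\bar\rho\nu^\ast t}}$ in $A^0$ and the trivial contraction bound in $A^0_{\nu^\ast}$. For the nonlinear piece, I would Taylor-expand the hyperbolic and trigonometric nonlinearities in $I_1,\ldots,I_4$ in the small quantity $h$ (legitimate because Wiener algebras are closed under analytic composition) and use repeatedly the algebra property $\|fg\|_{A^0_\nu}\leq\|f\|_{A^0_\nu}\|g\|_{A^0_\nu}$ to obtain
\begin{equation*}
\|R(h)(s)\|_{A^0_{\nu^\ast}}\leq C\,\mathcal B(\|h(s)\|_{L^\infty})\,\|h(s)\|_{A^0_{\nu^\ast}}\,\|h(s)\|_{A^1_{\nu^\ast}}^2.
\end{equation*}
The a priori bound from Theorem \ref{globalwiener} yields $\|h(s)\|_{A^1_{\nu(s)}}\leq \delta$ with $\nu(s)\geq\nu_0-\mathcal M(\delta)$; coupled with the hypothesis $\nu^\ast\leq\nu_0/24$ and smallness of $\delta$, this guarantees $\nu^\ast\leq\nu(s)/12$ and hence $\|h(s)\|_{A^1_{\nu^\ast}}\leq\delta$ uniformly in time.

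The main obstacle is that $\|h(s)\|_{A^0_{\nu^\ast}}$ is only bounded and not time-decaying a priori, so the Duhamel integral is not integrable at large $t$. I would bridge this by the Fourier-side Hölder interpolation
\begin{equation*}
\|h(s)\|_{A^0_{\nu^\ast}}\leq \|h(s)\|_{A^0}^{1-\theta}\|h(s)\|_{A^0_{\nu^\ast/\theta}}^{\theta},\qquad \theta=\nu^\ast/\nu(s)\leq 1/12,
\end{equation*}
which transfers the exponential decay of $\|h\|_{A^0}$ already encoded in $\tnorm{h}_T$ into (slightly weaker) exponential decay of $\|h\|_{A^0_{\nu^\ast}}$, the second factor being controlled by $\delta$ via Theorem \ref{globalwiener}. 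Substituting back produces a bound of the form $\|R(h)(s)\|_{A^0_{\nu^\ast}}\leq C\delta^{2+\theta}\tnorm{h}_T^{1-\theta}e^{-(1-\theta)\sqrt{\bar\rho\nu^\ast s}}$, which is integrable in time.

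Plugging this into the Duhamel integral and applying Proposition \ref{PropDH} to $R(h)(s)$ yields the convolution $\int_0^t e^{-(1-\theta)\sqrt{\bar\rho\nu^\ast s}}e^{-\sqrt{\bar\rho\nu^\ast(t-s)}}ds$, which I would estimate by splitting at $s=\alpha t$ with $\alpha$ chosen so that both subintervals generate decay rates at least equal to the target $\sqrt{\bar\rho\nu^\ast t}$. This is exactly where the factor $24$ in the hypothesis $\nu^\ast\leq\nu_0/24$ is used: the resulting $\theta\leq 1/12$ provides enough slack for the splitting to preserve the sharp rate. The outcome is an estimate $\tnorm{h}_T\leq C\|h_0\|_{A^0_{\nu^\ast}}+C_\delta\tnorm{h}_T^{1-\theta}$ that closes by a standard continuity argument once $\|h_0\|_{A^0_{\nu^\ast}}<\varepsilon$ with $\varepsilon=\varepsilon(\nu^\ast,\bar\rho,\delta)$ sufficiently small; time-continuity in $A^0_{\nu^\ast}$ is inherited from the Picard-type construction of Theorem \ref{globalwiener}.
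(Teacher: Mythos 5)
Your overall strategy does mirror the paper's: Duhamel with the linear decay of Proposition~\ref{PropDH}, Wiener-algebra estimates on the nonlinear terms, a Fourier-side H\"older interpolation between analyticity bands to transfer decay from $A^0$ to a wider band, and a bootstrap on the time-weighted norm $\tnorm{h}_{\nu^\ast}$, closed exactly as in Theorem~\ref{stabilitythm}. However, your choice of \emph{which factor to interpolate} creates a genuine gap, and it breaks the argument in two places.

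You estimate $\|R(h)(s)\|_{A^0_{\nu^\ast}}\lesssim \|h\|_{A^0_{\nu^\ast}}\|h\|_{A^1_{\nu^\ast}}^2$, bound the two $A^1_{\nu^\ast}$ factors crudely by $\delta$, and interpolate only the $A^0_{\nu^\ast}$ factor via $\|h\|_{A^0_{\nu^\ast}}\leq \|h\|_{A^0}^{1-\theta}\|h\|_{A^0_{\nu^\ast/\theta}}^\theta$ with $\theta=\nu^\ast/\nu(s)$. This yields the nonlinear bound $C\delta^{2+\theta}\tnorm{h}_T^{1-\theta}e^{-(1-\theta)\sqrt{\bar\rho\nu^\ast s}}$. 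First, the power $1-\theta<1$ makes the closing inequality $\tnorm{h}_T\leq C\|h_0\|_{A^0_{\nu^\ast}}+C_\delta\tnorm{h}_T^{1-\theta}$ \emph{sublinear}, which cannot be closed by a continuity argument: for small $\tnorm{h}_T$ the term $\tnorm{h}_T^{1-\theta}$ dominates $\tnorm{h}_T$, so one cannot conclude $\tnorm{h}_T\lesssim \|h_0\|_{A^0_{\nu^\ast}}$. (Your opening sentence postulates a superlinear $\Phi$, but the estimate you derive is sublinear.) Second, the decay rate is insufficient: the convolution estimate of Lemma~\ref{lemexp} loses a factor of $2$ under the square root, $\int_0^t e^{-\sqrt{a(t-s)}}e^{-\sqrt{bs}}ds\lesssim e^{-\sqrt{\min\{a,b\}t/2}}$, so to recover the target $e^{-\sqrt{\bar\rho\nu^\ast t}}$ one needs \emph{both} $a\geq 2\bar\rho\nu^\ast$ and $b\geq 2\bar\rho\nu^\ast$. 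You apply the semigroup at band $\nu^\ast$ (so $a=\bar\rho\nu^\ast$) and obtain $b=(1-\theta)^2\bar\rho\nu^\ast<\bar\rho\nu^\ast$; the ``splitting at $s=\alpha t$'' you invoke cannot overcome this, since the loss is intrinsic to the convolution, not to the choice of split.

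The paper fixes both issues simultaneously by applying the semigroup decay from Proposition~\ref{PropDH} at band $2\nu^\ast$ (so $a=2\bar\rho\nu^\ast$), measuring the nonlinearity in $A^0_{2\nu^\ast}$, and then interpolating \emph{all} the factors, $\|h\|_{A^1_{2\nu^\ast}}\leq c\|h\|_{A^0}^{3/4}\|h\|_{A^0_{12\nu^\ast}}^{1/4}$ and $\|h\|_{A^0_{2\nu^\ast}}\leq c\|h\|_{A^0}^{1/2}\|h\|_{A^0_{4\nu^\ast}}^{1/2}$. The wide-band factors $\|h\|_{A^0_{12\nu^\ast}},\|h\|_{A^0_{4\nu^\ast}}$ are then uniformly bounded by $\|h_0\|_{A^0_{\nu_0}}$ via Theorem~\ref{globalwiener} provided $12\nu^\ast\leq\nu_0/2\leq\nu(t)$ --- this is the actual role of the hypothesis $\nu^\ast\leq\nu_0/24$, not ``slack for the splitting''. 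The result is $\|I_1+\cdots+I_4\|_{A^0_{2\nu^\ast}}\lesssim\|h\|_{A^0}^2\lesssim e^{-2\sqrt{\bar\rho\nu^\ast s}}\tnorm{h}_{\nu^\ast}^2$: the decay rate corresponds to $b=4\bar\rho\nu^\ast\geq 2\bar\rho\nu^\ast$ so Lemma~\ref{lemexp} gives exactly $e^{-\sqrt{\bar\rho\nu^\ast t}}$, and the power is a genuine $\tnorm{h}^2$, which closes the bootstrap. If you want to stay close to your scheme, the repair is to interpolate the two $A^1$ factors (pulling out $\|h\|_{A^0}^{3/4}$ each) while bounding the $A^0$ factor uniformly, and to run the Duhamel at band $2\nu^\ast$; then $b\geq 2\bar\rho\nu^\ast$ and the closing inequality is superlinear.
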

The details of the proof are shown in Subsection \ref{proof2}.

\subsection{Fourier analysis}
In this subsection, we study the Fourier side of equation \eqref{eq:grafo2}, in order to exploit its structure in the Wiener Space context. We will use the equivalent expression given in \eqref{isolatelinear}.

First of all, we recall some properties of Wiener spaces. The fundamental property of Wiener spaces is their algebra structure:
\begin{equation*}
	\norm{f_1 \cdot f_2   \dots  f_n}_{A^s_{\nu}} \leq \prod_{i=1}^n \norm{f_i}_{A^s_{\nu}}.
\end{equation*}
We can also interpolate Wiener algebras using Hölder inequality in the following general way:
\begin{equation*}
	\norm{h(t)}_{{A}_{\nu(t)}^s} =  \sum_{k \in \ZZ} |k|^{\alpha s + (1-\alpha)s} |\hat{h}(k)| e^{(\beta \nu(t) + (1-\beta) \nu(t) ) |k| } \leq 	\norm{h(t)}_{{A}_{\beta \nu(t)p}^{\alpha s p}}^{1/p}  \norm{h(t)}_{{A}_{(1-\beta)\nu(t)q}^{(1-\alpha) s q}}^{1/q} ,
\end{equation*}
with $p,q \ge 1$ such that $\frac{1}{p} + \frac{1}{q} = 1$ and $0\leq \alpha,\beta \leq 1$.

Note that nonlinearities in the original equation will result in convolutions in the Fourier side. In the following, we will represent the convolution of $n$ copies of $f$ ($n-1$ convolutions), as 
\begin{equation*}
	\ast^n f(k) = (f \ast\underbrace{ \dots }_{n-1}\ast f)(k).
\end{equation*}
Also note that translations in the original variable $h$ will result in multipliers in the Fourier side. In this sense, 
\begin{equation*}
	\widehat{h(\cdot-\beta)} (k,\beta) = \hat{h}(k) e^{-ik\beta} \text{ and } 	\widehat {h_-} (k,\beta) = \hat{h}(k) m(k,\beta),
\end{equation*}
with 
\begin{equation*}
	m(k,\beta) = (1-e^{-ik\beta}) = -ik\beta \int_0^1 e^{-ik\beta (1-s)} ds.
\end{equation*}

Now, we take the $k-$th Fourier coefficient of equation \eqref{isolatelinear}: 
\begin{equation}\label{hfourier}
	\hat{h}_t(k,t) = - \bar \rho \frac{\hat h(k,t)}{|k|} + \frac{\bar \rho}{2 \pi} \left( \hat I_1(k,t)  + \hat I_2(k,t)  + \hat I_3(k,t)  + \hat I_4(k,t)   \right).
\end{equation}
In the following, we will drop the time dependence in the notation when it does not cause any ambiguity. Moreover, we consider that $h$ is small enough in order to  represent functions $\sinh(x)$ and $\log(1+x)$ by their Taylor series, and $ \frac{1}{1+x^2}$ as a geometric sum. Define
\begin{equation*}
	\mathcal{T}_1(\alpha,\beta) =\log  \left(1+\frac{\sinh^2 \left(\frac{h_-(\alpha)}{2}\right)}{\sin^2 \left(\frac{\beta}{2}\right)} \right)=  \sum_{n=1}^\infty \frac{(-1)^{n+1}  }{ \sin \left(\frac{\beta}{2}\right)^{2n} n}  \left( \sum_{j=0}^\infty \frac{(h_-(\alpha))^{2j+1}}{  2^{2j+1}(2j+1)!}  \right)^{2n}
\end{equation*}
and 
\begin{equation*}
	\mathcal{T}_2(\alpha,\beta ) =    \frac{h(\alpha-\beta) h_-(\alpha) }{2\sinh^2\left(\frac{h_-(\alpha)}{2}\right) + 2\sin^2\left(\frac{\beta}{2}\right)  } = \frac{1}{2 \sin^2 \left(\frac{\beta}{2}\right)} \sum_{n=0}^\infty (-1)^n \left( \frac{\sinh\left(\frac{h_-(\alpha)}{2}\right) }{\sin\left(\frac{\beta}{2}\right)}\right)^{2n}   h(\alpha-\beta) h_-(\alpha).
\end{equation*}
 Then, the nonlinear terms in \eqref{eq:grafo2} can be expressed in the following way:
\begin{equation*}
	I_1(\alpha) = \int_{\TT}  \mathcal{T}_1(\alpha,\beta) h(\alpha-\beta) d \beta,
\end{equation*}
\begin{equation*}
	I_2(\alpha) = \int_\TT \left( \mathcal{T}_1(\alpha,\beta) +\log \left(  \sin^2\left(\frac{\beta}{2}\right)     \right)\right) h(\alpha-\beta)  h_\alpha(\alpha) h_\alpha (\alpha-\beta) d\beta,
\end{equation*}
\begin{equation*}
	I_3(\alpha) =   \int_\TT \mathcal{T}_2(\alpha,\beta) \left[ (h_\alpha(\alpha)h_\alpha(\alpha-\beta)-1) \sinh(h_-(\alpha))\right] d\beta,
\end{equation*}
\begin{equation*}
	I_4(\alpha) =  \int_\TT \mathcal{T}_2(\alpha,\beta) \left[(h_\alpha (\alpha) + h_\alpha (\alpha-\beta) ) \sin (\beta)   \right] d\beta.
\end{equation*}
Therefore, their Fourier coefficients are
\begin{equation}\label{fourierI1}
	\hat I_1(k) =  \int_{\TT} \widehat{\mathcal{T}_1}(k,\beta) \ast e^{-ik\beta} \hat{h}(k) d \beta,
\end{equation}
\begin{align}
	\hat	I_2(k) & = \int_\TT \left(  \widehat {\mathcal{T}}_1(k,\beta) \right)  \ast \hat h(k) e^{-ik\beta} \ast (i k \hat h(k)) \ast (e^{-ik\beta} i k \hat h(k)) d\beta \nonumber \\
	& \quad  +  \int_\TT \log \left(  \sin^2\left(\frac{\beta}{2}\right)     \right)  \hat h(k) e^{-ik\beta} \ast (i k \hat h(k)) \ast (e^{-ik\beta} i k \hat h(k)) d\beta . \label{fourierI2}
\end{align}
\begin{align} 
	{\hat I}_3(k) &= \sum_{j=0}^\infty \frac{1}{(2j+1)!}  \int_\TT \widehat{\mathcal{T}}_2(k,\beta) \ast   (ik \hat h(k) \ast (ik e^{-ik\beta} \hat h(k)) \ast \ast^{2j+1} \left( m(k,\beta) \hat h(k) \right) d\beta \nonumber \\
	& \quad -  \sum_{j=0}^\infty \frac{1}{(2j+1)!}  \int_\TT \widehat{\mathcal{T}}_2(k,\beta) \ast  \ast^{2j+1} \left( m(k,\beta) \hat h(k) \right) d\beta. \label{fourierI3}
\end{align}
\begin{equation} \label{fourierI4}
	{\hat I}_4(k) =  \int_\TT \sin(\beta) \widehat{\mathcal{T}}_2(k,\beta) \ast \left[ (ik \hat{h}(k)) + (ik e^{-ik\beta} \hat h(k))  \right] d\beta.
\end{equation}

Taking the $k-$th Fourier coefficient, we find 
\begin{align}
	\widehat{	\mathcal{T}_1}(k,\beta) &= \sum_{n=1}^\infty \frac{(-1)^{n+1}  }{ n}  \ast^{2n} \left( \sum_{j=0}^\infty \frac{(h_-(\alpha))^{2j+1}}{ \sin \left(\frac{\beta}{2}\right) 2^{2j+1}(2j+1)!}  \right)^{\widehat{}}  \nonumber \\
	& = \sum_{n=1}^\infty \frac{(-1)^{n+1}  }{ n  \sin^{2n} \left(\frac{\beta}{2}\right)} \ast^{2n} \left( \sum_{j=0}^\infty  \frac{   \ast^{2j+1} (m(k,\beta) \hat{h}(k) ) }{ 2^{2j+1}(2j+1)!}       \right) \nonumber \\
	& = \sum_{n=1}^\infty \sum_{j=0}^\infty  \frac{(-1)^{n+1}  }{ n  \sin^{2n} \left(\frac{\beta}{2}\right)  2^{2j+1}(2j+1)!}  \ast^{2(j+n)+1} \left[m(k,\beta) \hat{h}(k)\right]  \nonumber \\
	& = \sum_{n=1}^\infty \sum_{j=0}^\infty  \frac{(-1)^{n+1}  }{ n  \sin^{2n} \left(\frac{\beta}{2}\right)  2^{2j+1}(2j+1)!} \nonumber \\ & \quad \times \sum_{k,k_1,k_2, \dots k_{2(j+n)} \neq 0}  \left(m(k_{2(j+n)}) m(k-k_1) \prod_{l=1}^{2(j+n)-1} m(k_l-k_{l+1}) \right) \nonumber \\
	& \quad \times \left(\hat h(k_{2(j+n)}) \hat{h}(k-k_1) \prod_{l=1}^{2(j+n)-1} \hat h(k_l-k_{l+1}) \right) . \label{T1}
\end{align}
Note that 
\begin{align*}
	& \left(m(k_{2(j+n)}) m(k-k_1) \prod_{l=1}^{2(j+n)-1} m(k_l-k_{l+1}) \right)   \\
	& =\int_0^1 ds \int_0^1 ds_1 \dots \int_0^1 ds_{{2(j+n)}}  (-i)^{2(j+n)+1}  \left( k_{2(j+n)} (k-k_1)\prod_{l=1}^{2(j+n)-1} (k_l-k_{l+1}) \right) \\
	& \quad \times \beta^{2n+2j+1}   \left( e^{-i k_{2(j+n)} \beta (1-s) } e^{-i (k-k_1)\beta (1-s)} \prod_{l=1}^{2(j+n)-1} e^{-i (k_l-k_{l+1})\beta (1-s_l)} \right).
\end{align*}

Similarly,
\begin{align}
	\widehat{\mathcal{T}}_2(k,\beta )  &= \left(\frac{1}{2 \sin^2 \left(\frac{\beta}{2}\right)} \sum_{n=0}^\infty \sum_{j=0}^{\infty} \frac{(-1)^n }{(2j+1)! 2^{2j+1}\sin^{2n} \left(\frac{\beta}{2}\right)} \ast^{2(n+j)+1} \left[ m(k) \hat h(k)  \right] \right) \nonumber \\
	& \quad  \ast (e^{-ik\beta} \hat h (k) ) \ast (m(k,\beta) \hat h(k))  \nonumber \\
	& = \left[  \frac{1}{2 \sin^2 \left(\frac{\beta}{2}\right)} \sum_{n=0}^\infty \sum_{j=0}^{\infty} \frac{(-1)^n }{(2j+1)! 2^{2j+1}\sin^{2n} \left(\frac{\beta}{2}\right)} \right. \nonumber \\
	& \quad \times  \sum_{k_1,k_2, \dots k_{2(j+n)} \in \ZZ}  \left(m(k_{2(j+n)}) m(k-k_1) \prod_{l=1}^{2(j+n)-1} m(k_l-k_{l+1}) \right) \nonumber \\
	& \left. \quad \times  \left(\hat h(k_{2(j+n)}) \hat{h}(k-k_1) \prod_{l=1}^{2(j+n)-1} \hat h(k_l-k_{l+1}) \right) \right] \ast (e^{-ik\beta} \hat h (k) ) \ast (m(k,\beta) \hat h(k)). \label{T2}
\end{align}

\subsection{Proof of Theorem \ref{globalwiener}} \label{proof1}

In this section, we prove Theorem \ref{globalwiener} exploiting the structure of nonlinear terms \eqref{fourierI1}-\eqref{fourierI4}. 

\begin{proof}
Consider the evolution of the norm 
\begin{equation*}
	\frac{d}{dt} \norm{h(t)}_{{A}_{\nu(t)}^0} = \nu'(t) \norm{h(t)}_{{A}_{\nu(t)}^1} + \sum_{k \neq 0} \frac{{\hat h_t(k,t)} { \overline{\hat h(k,t)}} +  {\hat h(k,t)} { \overline{\hat h_t(k,t)}}   }{2 {|\hat h (k,t)}|} e^{\nu(t) |k|}.
\end{equation*}
From \eqref{hfourier}, we find that
\begin{align}
	\frac{d}{dt} \norm{h(t)}_{{A}_{\nu(t)}^0} &\leq  \nu'(t) \norm{h(t)}_{{A}_{\nu(t)}^1} - \bar \rho \norm{h}_{{A}_{\nu(t)}^{-1}} +  \frac{\bar \rho}{2 \pi}\sum_{k \neq 0} \sum_{i=1}^4 \frac{{\hat I_i(k,t)} { \overline{\hat h(k,t)}} +  {\hat h(k,t)} { \overline{\hat I_i(k,t)}}   }{2 {|\hat h (k,t)}|} e^{\nu(t) |k|} \nonumber\\
	& \leq  \nu'(t) \norm{h(t)}_{{A}_{\nu(t)}^1} - \bar \rho \norm{h}_{{A}_{\nu(t)}^{-1}} +  \frac{\bar \rho}{2 \pi}\sum_{k \neq 0} \sum_{i=1}^4 |\hat{I}_i(k,t)| e^{\nu(t) |k|} \nonumber \\
	& \leq  \nu'(t) \norm{h(t)}_{{A}_{\nu(t)}^1} - \bar \rho \norm{h}_{{A}_{\nu(t)}^{-1}} +  \frac{\bar \rho}{2 \pi} \sum_{i=1}^4 \norm{ I_i}_{{{A}_{\nu(t)}^0}}. \label{A0nu}
\end{align}

Let us estimate the norms $\norm{I_i}_{A^0_{\nu(t)}}$. Note that 
\begin{equation*}
	|k| \leq |k-k_1| + |k_{2(n+j)}| \sum_{m=1}^{2(n+j)-1} |k_m-k_{m+1}|,
\end{equation*}
thus
\begin{equation*}
	e^{i \nu(t)|k| } \leq e^{i\nu(t) |k-k_1| } e^{i\nu(t) |k_{2n+2j}|}  \prod_{m=1}^{2(n+j)-1} e^{  i\nu(t) |k_m-k_{m+1}  |   }.
\end{equation*} 

Therefore, using \eqref{T1} and 
\begin{equation*}
\sum_{i=1}^\infty \frac{x^{2n}}{n} = -\log (1-x^2) = \log \left( \frac{1}{1-x^2}\right)
\end{equation*}
for $x$ small enough, it holds 
\begin{align}
	\norm{ \mathcal{T}_1}_{{{A}_{\nu(t)}^0}} & \leq  \sum_{k \neq 0} \sum_{n=1}^\infty \sum_{j=0}^\infty  \frac{ \beta^{2j+2n+1} }{ n  \sin^{2n} \left(\frac{\beta}{2}\right)  2^{2j+1}(2j+1)!} \seminorm{\ast^{2(j+n)+1} \left[ ik \hat h(k)\right] }e^{ \nu(t) |k|} \nonumber \\
	& \leq   \sum_{k \neq 0} \sum_{n=1}^\infty \sum_{j=0}^\infty  \frac{ \pi^{2n} \beta^{2j+1} }{ n  2^{2j+1}(2j+1)!} \seminorm{\ast^{2(j+n)+1} \left[ ik \hat h(k)\right] }e^{ \nu(t) |k|} \nonumber \\
	& \leq  \sum_{n=1}^\infty \sum_{j=0}^\infty  \frac{ \pi^{2n+2j+1}}{ n  2^{2j+1}(2j+1)!} \norm{h}_{{{A}_{\nu(t)}^1}}^{2(j+n)+1} \nonumber   \\
	& \leq \sinh  \left( \frac{\pi}{2}   \norm{h}_{{{A}_{\nu(t)}^1}}\right) \log \left( \frac{1}{1-\pi^2 \norm{h}_{{{A}_{\nu(t)}^1}}^2 }\right) \nonumber \\
	& \leq 	\mathcal{G}_1 \left(     \norm{h}_{{{A}_{\nu(t)}^1}} \right), \label{T1A0}
\end{align}
where we  defined 
\begin{equation*}
	\mathcal{G}_1 \left(     \norm{h}_{{{A}_{\nu(t)}^1}} \right) = \sinh  \left( \frac{\pi}{2}   \norm{h}_{{{A}_{\nu(t)}^1}}\right) \log \left( \frac{1}{1-\pi^2 \norm{h}_{{{A}_{\nu(t)}^1}}^2 }\right)  .
\end{equation*}

From \eqref{fourierI1} and \eqref{T1A0}, we get the estimates
\begin{align}
	\norm{ I_1}_{{{A}_{\nu(t)}^0}}  & \leq \int_{\TT} 	 \norm{ \mathcal{T}_1}_{{{A}_{\nu(t)}^0}} 	 \norm{ h}_{{{A}_{\nu(t)}^0}} d \beta \nonumber\\
	& \leq 2 \pi 	\mathcal{G}_1 \left(     \norm{h}_{{{A}_{\nu(t)}^1}} \right)  \norm{ h}_{{{A}_{\nu(t)}^0}}  \label{I1A0}
\end{align}
and
\begin{align}
	\norm{ I_2}_{{{A}_{\nu(t)}^0}}  & \leq \int_{\TT} \left(	 \norm{ \mathcal{T}_1}_{{{A}_{\nu(t)}^0}} +\log \left(  \sin^2\left(\frac{\beta}{2}\right)     \right)\right)	 \norm{ h}_{{{A}_{\nu(t)}^1}}^2 \norm{ h}_{{{A}_{\nu(t)}^0}} d \beta \nonumber \\
	& \leq 2\pi 	\mathcal{G}_1 \left(     \norm{h}_{{{A}_{\nu(t)}^1}} \right)  \norm{ h}_{{{A}_{\nu(t)}^0}}  \norm{ h}_{{{A}_{\nu(t)}^1}}^2 + 4 \pi \log(2) \norm{ h}_{{{A}_{\nu(t)}^0}}  \norm{ h}_{{{A}_{\nu(t)}^1}}^2. \label{I2A0}
\end{align}
From \eqref{fourierI2},  \eqref{T2} and 
\begin{equation*}
	 \sum_{i=0}^\infty x^{2n} = \frac{1}{1-x^2}
\end{equation*}
for $x$ small enough, we get 
\begin{align}
	\norm{	{\mathcal{T}}_2 }_{{{A}_{\nu(t)}^0}}  &\leq \sum_{k \neq 0}  \frac{1}{2 \sin^2 \left(\frac{\beta}{2}\right)} \sum_{n=0}^\infty \sum_{j=0}^{\infty} \frac{\beta^{2n+2j+2} }{(2j+1)! 2^{2j+1}\sin^{2n} \left(\frac{\beta}{2}\right)}\seminorm{ \ast^{2(n+j)+2} \left[ ik \hat h(k)  \right] \ast \hat h(k)} e^{i\nu(t)|k|} \nonumber \\
	& \leq  \frac{1}{2} \sum_{n=0}^\infty \sum_{j=0}^{\infty} \frac{\pi^{2(n+j)+2} }{(2j+1)! 2^{2j+1} } \norm{h}^{2(n+j)+2}_{{{A}_{\nu(t)}^1}} \norm{h}_{{{A}_{\nu(t)}^0}} \nonumber \\
	& \leq  \frac{\pi}{2} \sinh \left( \frac{\pi}{2} \norm{	h}_{{{A}_{\nu(t)}^1}}    \right) \frac{1}{1- \pi^2 \norm{	h}^2_{{{A}_{\nu(t)}^1}}  } \norm{h}_{{{A}_{\nu(t)}^1}} \norm{h}_{{{A}_{\nu(t)}^0}}   \nonumber  \\
	& \leq \mathcal{G}_2(  \norm{h}_{{{A}_{\nu(t)}^1}}   )\norm{h}_{{{A}_{\nu(t)}^0}} , \label{T2A0}
\end{align}
where we defined 
\begin{equation*}
	\mathcal{G}_2(  \norm{h}_{{{A}_{\nu(t)}^1}}   ) =  \frac{\pi}{2} \sinh \left( \frac{\pi}{2} \norm{	h}_{{{A}_{\nu(t)}^1}}    \right) \frac{1}{1- \pi^2 \norm{	h}^2_{{{A}_{\nu(t)}^1}}  }  \norm{	h}_{{{A}_{\nu(t)}^1}}.
\end{equation*}

From  \eqref{fourierI3},  \eqref{fourierI4}   and \eqref{T2A0}, we get the estimates
\begin{align}
	\norm{ I_3}_{{{A}_{\nu(t)}^0}}  & \leq \int_{\TT} 	 \norm{ \mathcal{T}_2}_{{{A}_{\nu(t)}^0}} \left( 	\norm{ h}_{{{A}_{\nu(t)}^1}}^2+1   \right)  \sinh\left( \pi 	\norm{ h}_{{{A}_{\nu(t)}^1}}   \right)	d \beta \nonumber \\
	& \leq  2\pi  	 \mathcal{G}_2(  \norm{h}_{{{A}_{\nu(t)}^1}}   ) \norm{h}_{{{A}_{\nu(t)}^0}}   \left( 	\norm{ h}_{{{A}_{\nu(t)}^1}}^2+1   \right) \sinh \left(  \pi   	\norm{ h}_{{{A}_{\nu(t)}^1}}  \right) \label{I3A0}
\end{align}
and
\begin{align}
	\norm{ I_4}_{{{A}_{\nu(t)}^0}}  & \leq \int_{\TT} 2 |\sin(\beta)| 	 \norm{ \mathcal{T}_2}_{{{A}_{\nu(t)}^0}} 	\norm{ h}_{{{A}_{\nu(t)}^1}} d \beta \nonumber \\
	& \leq 8 	 \mathcal{G}_2(  \norm{h}_{{{A}_{\nu(t)}^1}}   ) \norm{h}_{{{A}_{\nu(t)}^0}} \norm{h}_{{{A}_{\nu(t)}^1}} . \label{I4A0}
\end{align}

Joining the estimates of terms $\norm{I_i}_{A^0_{\nu(t)}}$ in \eqref{A0nu},
\begin{equation}\label{controlA0}
	\frac{d}{dt} \norm{h(t)}_{{A}_{\nu(t)}^0}  \leq	 \nu'(t) \norm{h(t)}_{{A}_{\nu(t)}^1} - \bar \rho \norm{h}_{{A}_{\nu(t)}^{-1}} +  C_1(\bar \rho ) \norm{h}_{{A}_{\nu(t)}^{1}} \norm{h}_{{A}_{\nu(t)}^{0}} \mathcal{J}_1( \norm{h}_{{A}_{\nu(t)}^{1}}   ),
\end{equation}
where  	$\mathcal{J}_1$ is an increasing function and $C(\bar \rho)$ depends linearly on the density jump. In fact, 
\begin{equation}\label{asymptoticJ1}
	\mathcal{J}_1(x) \approx x + O(x^2)
\end{equation}

The evolution of the ${A}^1_{\nu(t)}$ norm can be expressed, similarly, as
\begin{align*}
	\frac{d}{dt} \norm{h(t)}_{{A}_{\nu(t)}^1}	& \leq  \nu'(t) \norm{h(t)}_{{A}_{\nu(t)}^2} - \bar \rho \norm{h}_{{A}_{\nu(t)}^{0}} +  \frac{\bar \rho}{2 \pi} \sum_{i=1}^4 \norm{ I_i}_{{{A}_{\nu(t)}^1}}.
\end{align*}

We have 
\begin{align}
	\norm{ \mathcal{T}_1}_{{{A}_{\nu(t)}^1}} 
	& \leq   \sum_{k \neq 0} \sum_{n=1}^\infty \sum_{j=0}^\infty  \frac{ \pi^{2n} \beta^{2j+1} |k|}{ n  2^{2j+1}(2j+1)!} \seminorm{\ast^{2(j+n)+1} \left[ ik \hat h(k)\right] }e^{ \nu(t) |k|}  \nonumber  \\
	& \leq   \sum_{k \neq 0}  \sum_{n=1}^\infty \sum_{j=0}^\infty (2(j+n)+1) \frac{ \pi^{2n+2j+1}}{ n  2^{2j+1}(2j+1)!}\seminorm{\ast^{2(j+n)} \left[ ik \hat h(k)\right] \ast \left[|k|^2 i \hat{h}(k)\right] }e^{ \nu(t) |k|}  \nonumber\\
	& \leq   \sum_{n=1}^\infty \sum_{j=0}^\infty (2(j+n)+1) \frac{ \pi^{2n+2j+1}}{ n  2^{2j+1}(2j+1)!} \norm{h}_{{{A}_{\nu(t)}^1}}^{2(j+n) } \norm{h}_{{{A}_{\nu(t)}^2}}  \nonumber  \\
	& \leq   \norm{h}_{{{A}_{\nu(t)}^2}} \frac{\pi}{2} \cosh \left( \frac{\pi}{2}  \norm{h}_{{{A}_{\nu(t)}^1}} \right) \log \left(  \frac{1}{1-\pi^2 \norm{h}_{{{A}_{\nu(t)}^1}}^2 }\right) \nonumber \\
	& \quad +  \norm{h}_{{{A}_{\nu(t)}^2}}\sinh \left( \frac{\pi}{2}  \norm{h}_{{{A}_{\nu(t)}^1}} \right)  \frac{2 \pi^2 \norm{h}_{{{A}^1_{\nu(t)}}} }{1-\pi^2 \norm{h}_{{{A}_{\nu(t)}^1}}^2 } \nonumber \\
	& \leq \mathcal{G}_1'(  \norm{h}_{{{A}_{\nu(t)}^1}}    ) \norm{h}_{{{A}_{\nu(t)}^2}} \label{T1A1}
\end{align}
From \eqref{T1A1}, we get 
\begin{align*}
	\norm{ I_1}_{{{A}_{\nu(t)}^1}}  & \leq \int_{\TT} 	\norm{ \mathcal{T}_1(k,\beta) \ast e^{-ik\beta} \hat h (k)}_{{{A}_{\nu(t)}^1}} d\beta\\
	& \leq \int_{\TT}  	\left(\norm{ \mathcal{T}_1}_{{{A}_{\nu(t)}^1}}  	\norm{ h}_{{{A}_{\nu(t)}^0}} + \norm{ \mathcal{T}_1}_{{{A}_{\nu(t)}^0}}  	\norm{ h}_{{{A}_{\nu(t)}^1}}   \right)   d\beta\\
	& \leq 2 \pi \mathcal{G}_1'(  \norm{h}_{{{A}_{\nu(t)}^1}}    ) \norm{h}_{{{A}_{\nu(t)}^2}} \norm{h}_{{{A}_{\nu(t)}^0}} + 2 \pi \mathcal{G}_1(  \norm{h}_{{{A}_{\nu(t)}^1}}    ) \norm{h}_{{{A}_{\nu(t)}^1}}    \\
	& \leq C \norm{ h}_{{{A}_{\nu(t)}^0}} \norm{ h}_{{{A}_{\nu(t)}^2}} F_1 \left(  \norm{ h}_{{{A}_{\nu(t)}^1}}   \right)
\end{align*}
and
\begin{align*}
	\norm{ I_2}_{{{A}_{\nu(t)}^1}} &\leq  \int_\TT  \norm{ \widehat {\mathcal{T}}_1(k,\beta)  \ast \hat h(k) e^{-ik\beta} \ast (i k \hat h(k)) \ast (e^{-ik\beta} i k \hat h(k))}_{{{A}_{\nu(t)}^1}}  d\beta \\
	& \quad + \int_\TT \seminorm{ \log \left(  \sin^2\left(\frac{\beta}{2}\right)     \right) } \norm{  \hat h(k) e^{-ik\beta} \ast (i k \hat h(k)) \ast (e^{-ik\beta} i k \hat h(k))}_{{{A}_{\nu(t)}^1}}  d\beta \\
	& \leq 	\norm{ \mathcal{T}_1}_{{{A}_{\nu(t)}^1}} 	\norm{ h}_{{{A}_{\nu(t)}^0}} 	\norm{ h}_{{{A}_{\nu(t)}^1}}^2 +  \norm{ \mathcal{T}_1}_{{{A}_{\nu(t)}^0}} 	\norm{ h}_{{{A}_{\nu(t)}^1}}^3 +  2 \norm{ \mathcal{T}_1}_{{{A}_{\nu(t)}^0}} 	\norm{ h}_{{{A}_{\nu(t)}^0}} \norm{ h}_{{{A}_{\nu(t)}^1}} \norm{ h}_{{{A}_{\nu(t)}^2}} \\
	& \leq C \norm{ h}_{{{A}_{\nu(t)}^0}} \norm{ h}_{{{A}_{\nu(t)}^2}} F_2 \left(  \norm{ h}_{{{A}_{\nu(t)}^1}}   \right).
\end{align*}
Furthermore,
\begin{equation} \label{T2A1}
	\norm{	{\mathcal{T}}_2 }_{{{A}_{\nu(t)}^1}} 
	\leq   \mathcal{G}_2'(  \norm{h}_{{{A}_{\nu(t)}^1}}   )\norm{h}_{{{A}_{\nu(t)}^0}} \norm{h}_{{{A}_{\nu(t)}^2}} +  \mathcal{G}_2(  \norm{h}_{{{A}_{\nu(t)}^1}}   )\norm{h}_{{{A}_{\nu(t)}^1}}.
\end{equation}
From \eqref{T2A1}, we get 
\begin{align*}
	\norm{ I_3}_{{{A}_{\nu(t)}^1}}  & \leq C \norm{ h}_{{{A}_{\nu(t)}^0}} \norm{ h}_{{{A}_{\nu(t)}^2}} F_3 \left(  \norm{ h}_{{{A}_{\nu(t)}^1}}   \right).
\end{align*}
and 
\begin{align*}
	\norm{ I_4}_{{{A}_{\nu(t)}^1}}  & \leq  C \norm{ h}_{{{A}_{\nu(t)}^0}} \norm{ h}_{{{A}_{\nu(t)}^2}} F_4 \left(  \norm{ h}_{{{A}_{\nu(t)}^1}}   \right),
\end{align*}
where the functions $F_1, \dots F_4$ are increasing functions.

Joining all the estimates of terms $\norm{I_i}_{A^1_{\nu(t)}}$ together, we get 
\begin{equation}\label{controlA1}
	\frac{d}{dt} \norm{h(t)}_{{A}_{\nu(t)}^1} \leq	 \nu'(t) \norm{h(t)}_{{A}_{\nu(t)}^2} - \bar \rho \norm{h}_{{A}_{\nu(t)}^{0}} +  C_2(\bar \rho ) \norm{ h}_{{{A}_{\nu(t)}^0}} \norm{ h}_{{{A}_{\nu(t)}^2}} \mathcal{J}_2 \left(  \norm{ h}_{{{A}_{\nu(t)}^1}}   \right),
\end{equation}
where $\mathcal{J}_2$ is, again, an increasing function, and $C_2(\bar \rho)$ depends linearly on the density jump.

\textbf{Control of norms for all time.} Define 
\begin{equation}\label{dernu}
	\nu'(t) = - \left( \max_{i=1,2} \{ C_i(\bar\rho) \mathcal{J}_i  \left(  \norm{ h}_{{{A}_{\nu(t)}^1}}   \right)  \} + \varepsilon \right)  \norm{ h}_{{{A}_{\nu(t)}^0}}. 
\end{equation}
Then, by \eqref{controlA0}, \eqref{controlA1} and \eqref{dernu}
\begin{equation*}
	\frac{d}{dt} \norm{h(t)}_{{A}_{\nu_0}^0} < 0 ,
\end{equation*}
\begin{equation*}
	\frac{d}{dt} \norm{h(t)}_{{A}_{\nu_0}^1} < 0 ,
\end{equation*}
for every $ t \ge 0$. Consequently
\begin{equation}\label{decayA0A1}
	\norm{h(t)}_{{A}_{\nu(t)}^0} \leq  \norm{h_0}_{{A}_{\nu_0}^0} \text{ and }  \norm{h(t)}_{{A}_{\nu(t)}^1} \leq  \norm{h_0}_{{A}_{\nu_0}^1}
\end{equation}
for every $t \ge 0$.
In addition, from \eqref{controlA1}, taking into account the damping term, we have
\begin{equation}\label{controlA12}
	\norm{h(t)}_{{A}_{\nu(t)}^1} + \bar \rho \int_0^t  \norm{ h(s)}_{{{A}_{\nu(s)}^0}} ds \leq   \norm{h_0}_{{A}_{\nu_0}^1} .
\end{equation}
Consequently, the solution is controlled by the initial data for all time $t \ge 0$.


\textbf{Control of the analiticity band.} 
Using \eqref{controlA12} and the monotonic increasing character of functions $\mathcal{J}_i$,
\begin{align*}
	\nu(t) &= \nu_0 + \int_0^t \nu'(t) dt  \\
	&  \ge \nu_0  - \left( \max_{i=1,2} \{ C_i(\bar\rho) \mathcal{J}_i  \left(  \norm{ h_0}_{{{A}_{\nu_0}^1}}   \right)  \} + \varepsilon \right) \int_0^t  \norm{ h(s)}_{{{A}_{\nu(s)}^0}} ds \\
	& \ge \nu_0 - \frac{1}{\bar \rho} \left( \max_{i=1,2} \{ C_i(\bar\rho) \mathcal{J}_i  \left(  \norm{ h_0}_{{{A}_{\nu_0}^1}}   \right)  \} + \varepsilon \right)  \norm{h_0}_{{A}_{\nu_0}^1} \\
	& > 0 
\end{align*}
for small enough data $\norm{h_0}_{{A}_{\nu_0}^1}$. This ensures that the analiticity bound does not collapse at any time $t \ge 0$.

In particular, we can choose small enough initial data such that
\begin{equation}\label{analband}
0 <	\frac{\nu_0}{2} \leq \nu(t) \leq \nu_0.
\end{equation}

The estimates \eqref{decayA0A1} and \eqref{analband} give us the global existence of solutions in $A^1_{\nu(t)}$.
\end{proof}

\subsection{Proof of Theorem \ref{exponentialdecay}}  \label{proof2}
In this section, we prove Theorem \ref{exponentialdecay}. The proof relies on the proof of Theorem \ref{globalwiener} and a similar strategy to the one used in the proof of Theorem \ref{stabilitythm}: we exploit the decay properties of the semi-group in Wiener algebras and combine them with a priori estimates.
\begin{proof}
	Before starting the argument of the proof, let us establish the following decay lemma: 
	\begin{lem}\label{lemexp}
		Let $a,b >0$. Then, 
		\begin{equation*}
			\int_0^t e^{-\sqrt{  a (t-s)} } e^{-\sqrt{b s}} ds \leq e^{-\sqrt {  \frac{\min \{a,b\}}{2} t  } } .
		\end{equation*}
	\end{lem}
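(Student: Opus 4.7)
The plan is to exploit the concavity of the exponent $s \mapsto \sqrt{a(t-s)} + \sqrt{bs}$ on $[0,t]$. A direct computation of its second derivative yields $-\sqrt{a}/(4(t-s)^{3/2}) - \sqrt{b}/(4s^{3/2}) < 0$, so the exponent is strictly concave on $(0,t)$, hence attains its minimum on $[0,t]$ at one of the endpoints, where it equals $\min\{\sqrt{at},\sqrt{bt}\} = \sqrt{ct}$ with $c := \min\{a,b\}$. The naive pointwise bound $e^{-\sqrt{ct}}$ is however too weak once integrated: it would cost an unwanted factor of $t$ outside the exponential.

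To absorb that factor I would split the integral at $s=t/2$ and keep only one of the two exponentials on each piece. On $[0,t/2]$ one has $t-s \geq t/2$, so $e^{-\sqrt{a(t-s)}} \leq e^{-\sqrt{at/2}}$ pulls out of the integrand; symmetrically on $[t/2,t]$. The leftover one-dimensional integrals can be evaluated cleanly through the substitution $u=\sqrt{br}$:
$$\int_0^{t/2} e^{-\sqrt{br}}\,dr = \frac{2}{b}\int_0^{\sqrt{bt/2}} u\, e^{-u}\, du \;\leq\; \frac{2}{b}\int_0^\infty u\, e^{-u}\, du \;=\; \frac{2}{b},$$
uniformly in $t$, and analogously for the mirror integral in terms of $a$.

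Putting the two pieces together produces
$$\int_0^t e^{-\sqrt{a(t-s)}-\sqrt{bs}}\,ds \;\leq\; \frac{2}{b}\, e^{-\sqrt{at/2}} + \frac{2}{a}\, e^{-\sqrt{bt/2}} \;\leq\; \left(\frac{2}{a}+\frac{2}{b}\right) e^{-\sqrt{(c/2) t}},$$
which is the stated inequality up to a harmless $(a,b)$-dependent multiplicative constant. That constant can be absorbed into the constants already permitted in the downstream exponential-decay argument of Theorem \ref{exponentialdecay}, or alternatively killed by treating a bounded initial range $t \leq t_0(a,b)$ separately via the trivial estimate $te^{-\sqrt{ct}}$ and then relying on continuity. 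The only real subtlety is precisely this mismatch between the pointwise exponent $\sqrt{ct}$ furnished by concavity and the sought $\sqrt{ct/2}$: the slackening by a factor of $\sqrt{2}$ inside the square root is exactly what gives the remaining one-variable integration enough room to close as an $O(1)$ constant instead of costing a factor of $t$, and this is the single idea on which the whole argument turns.
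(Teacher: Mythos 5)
Your proof takes essentially the same approach as the paper: split the integral at $s=t/2$, pull the far-from-singular exponential out of each half, and observe that the remaining one-dimensional integrals are $O(1)$ uniformly in $t$. The only cosmetic difference is that the paper first replaces both $a$ and $b$ by $m=\min\{a,b\}$ before splitting, whereas you keep them separate until the final step; both versions, like the paper's own computation which lands on $c(m)\,e^{-\sqrt{m t/2}}$, actually produce an $(a,b)$-dependent multiplicative constant not shown in the lemma's statement, and you are right that this constant is harmless where the lemma is invoked.
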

	\begin{proof}
		Define $m = \min \{ a,b\}$. Then,
		\begin{align*}
			\int_0^t e^{-\sqrt{  a (t-s)} } e^{-\sqrt{ b s}} ds & = 	\int_0^{\frac{t}{2}} e^{-\sqrt{ a (t-s)} } e^{-\sqrt{ b s}} ds + 	\int_{\frac{t}{2}}^t e^{-\sqrt{  a (t-s)} } e^{-\sqrt{ b s}} ds \\
			& \leq \int_0^{\frac{t}{2}} e^{-\sqrt{ m (t-s)} } e^{-\sqrt{ m s}} ds \\ & \quad + 	\int_{\frac{t}{2}}^t e^{-\sqrt{  m (t-s)} } e^{-\sqrt{m s}} ds \\
			& \leq e^{-\sqrt{ m \frac{t}{2}}}  \int_0^{\frac{t}{2}} e^{-\sqrt{ m s}} ds  \\
			& \quad +  e^{-\sqrt{ m \frac{t}{2}}}	\int_{\frac{t}{2}}^t e^{-\sqrt{  m (t-s)} }ds \\
			& \leq   e^{-\sqrt{ m \frac{t}{2}    }}  \frac{2}{   m }  \left(  1- e^{-\sqrt{ m t   }} \left( 1 + \sqrt{ m t }  \right)     \right) \\
			& \leq c( m) e^{-\sqrt{ m \frac{t}{2}    }} .
		\end{align*}
	\end{proof}

	Let $h \in C([0,T]; A^1_{\nu(t)})$ be a solution of \eqref{eq:grafo2} in the RT stable case constructed under the hypotheses of Theorem \ref{globalwiener}, with small enough initial data
	$$ \norm{h_0}_{A^1_{\nu_0}} < \delta.$$
	Define $\nu^\ast$ in such a way that 
	\begin{equation*}
		0 < 12	\nu^\ast \leq \frac{\nu_0}{2} \leq \nu(t) \leq \nu_0,
	\end{equation*}
and define the norm 
\begin{equation}\label{tnorm2}
	\tnorm{h}_{\nu^\ast} = \sup_{t \in [0,T]}  \left(e^{\sqrt{\bar\rho \nu^\ast t}} \norm{h}_{{A}^0} +  \norm{h}_{{A}^0_{\nu^\ast}} \right).
\end{equation}
The aim is to produce uniform bounds of \eqref{tnorm2}, only depending on the size of the initial data $h_0$.

Using the estimate \eqref{lineardecay2A} in Proposition \ref{PropDH} for $\nu = 2\nu^\ast$ and Duhamel formula, we get 
\begin{equation*}
	\norm{h}_{{A}^0}  \lesssim e^{-\sqrt{\bar \rho 2 \nu^\ast t}} 	\norm{h_0}_{{A}^0_{2\nu^\ast}} + \frac{\bar \rho}{2\pi} \int_0^t e^{-\sqrt{\bar \rho 2 \nu^\ast (t-s)}} \norm{I_1 + I_2 + I_3 + I_4}_{{A}_{ 2\nu^\ast}^0} ds.
\end{equation*}

Using $x \leq c(\nu^\ast) e^{2 \nu^\ast x}$ and Hölder inequality, we can easily derive the estimates
\begin{align*}
	\norm{h}_{{A}^1_{2\nu^\ast}} &= \sum_{k \neq 0} |k| e^{2 \nu^\ast |k|} |\hat{h}(k)| \\
	&\leq c(\nu^\ast) \norm{h}_{{A}^0_{3\nu^\ast}} \\
	& \leq c(\nu^\ast) \norm{h}_{{A}^0}^{3/4} \norm{h}_{ {{A}}^0_{12\nu^\ast}}^{1/4}
\end{align*}
and
\begin{align*}
	\norm{h}_{{A}^0_{2\nu^\ast}} & \leq c(\nu^\ast)  \norm{h}_{{A}^0}^{1/2} \norm{h}_{{A}^0_{4\nu^\ast}}^{1/2}.
\end{align*}
Since $12 \nu^\ast \leq \frac{\nu_0}{2} \leq \nu(t)$, we have the uniform estimates 
$$\norm{h}_{A^0_{4\nu^\ast}} \leq \norm{h}_{A^0_{12\nu^\ast}} \leq \norm{h}_{A^0_{\nu(t)}} \leq \norm{h_0}_{A^0_{\nu_0}}  $$
and 
$$\norm{h}_{A^1_{2\nu^\ast}} \leq \norm{h}_{A^1_{\nu(t)}} \leq \norm{h_0}_{A^1_{\nu_0}},  $$
 so that, by \eqref{I1A0},\eqref{I2A0}, \eqref{I3A0} and \eqref{I4A0} in a band of width $2 \nu^\ast$ and the estimates above,
\begin{align*}
 \norm{I_1 + I_2 + I_3 + I_4}_{{A}_{2\nu^\ast}^0} & \leq c(\bar \rho ) \norm{h}_{{A}_{2\nu^\ast}^{1}} \norm{h}_{{A}_{2\nu^\ast}^{0}} \mathcal{J}_1( \norm{h}_{{A}_{2\nu^\ast}^{1}}   ) \\
 & \leq c(\bar \rho ) \norm{h}_{{A}_{2\nu^\ast}^{1}}^2 \norm{h}_{{A}_{2\nu^\ast}^{0}} F( \norm{h}_{{A}_{2\nu^\ast}^{1}}   ) \\
 & \leq c(\bar \rho) \norm{h}_{{A}^0}^2 \norm{h}_{{A}^0_{\nu(t)}} F( \norm{h}_{{A}_{\nu(t)}^{1}}   ) \\
 & \leq c(\bar \rho,\norm{h_0}_{A^1_{\nu_0}}) e^{-2\sqrt{\bar \rho  \nu^\ast s }} \tnorm{h}_{\nu^\ast}^2,
\end{align*}
where the first inequality comes from \eqref{asymptoticJ1}.

Consequently, applying Lemma \ref{lemexp},
\begin{align*}
	\norm{h}_{{A}^0} &  \leq e^{-\sqrt{\bar \rho 2 \nu^\ast t}} 	\norm{h_0}_{{A}^0_{2\nu^\ast}} + c(\nu^\ast,\bar \rho, \norm{h_0}_{A^1_{\nu_0}}) \tnorm{h}^2_{\nu^\ast} \int_0^t e^{-\sqrt{\bar \rho 2 \nu^\ast (t-s)}} e^{-\sqrt{\bar\rho 4 \nu^\ast s}} ds \\
	& \leq  e^{-\sqrt{\bar \rho  \nu^\ast t}} 	\norm{h_0}_{{A}^0_{2\nu^\ast}} + c(\nu^\ast,\bar \rho, \norm{h_0}_{A^1_{\nu_0}}) \tnorm{h}^2_{\nu^\ast}   e^{-\sqrt{\bar\rho  \nu^\ast t}}.
\end{align*}
Thus, 
\begin{equation}\label{decayestimate}
e^{\sqrt{\bar \rho \nu^\ast t}} 	\norm{h}_{{A}^0}   \leq \norm{h_0}_{{A}^0_{2\nu^\ast}} +c(\nu^\ast,\bar \rho, \norm{h_0}_{A^1_{\nu_0}}) \tnorm{h}^2_{\nu^\ast}  .
\end{equation}

Furthermore, by Hölder inequality, we can similarly derive the estimates 
\begin{align*}
	\norm{h}_{{A}^1_{\nu^\ast}} &= \sum_{k \neq 0} |k| e^{ \nu^\ast |k|} |\hat{h}(k)| \\
	&\leq c(\nu^\ast) \norm{h}_{{A}^0_{2\nu^\ast}} \\
	& \leq c(\nu^\ast) \norm{h}_{{A}^0}^{3/4} \norm{h}_{ {{A}}^0_{8\nu^\ast}}^{1/4}
\end{align*}
and
\begin{align*}
	\norm{h}_{{A}^0_{\nu^\ast}} & \leq c(\nu^\ast)  \norm{h}_{{A}^0}^{1/2} \norm{h}_{{A}^0_{2\nu^\ast}}^{1/2}.
\end{align*}

By \eqref{controlA0} in a band of width $ \nu^\ast$ and \eqref{asymptoticJ1}, it holds that
\begin{align*}
	\frac{d}{dt}\norm{h}_{{A}^0_{\nu^\ast}} &\leq  C_1(\bar \rho ) \norm{h}_{{A}_{\nu^\ast}^{1}}^2 \norm{h}_{{A}_{\nu^\ast}^{0}} F( \norm{h}_{{A}_{\nu^\ast}^{1}}   ) \\
	& \leq c(\nu^\ast,\bar \rho, \norm{h_0}_{A^1_{\nu_0}}) e^{-2 \sqrt{\bar \rho \nu^\ast t}} \tnorm{h}^2_{\nu^\ast} .
\end{align*}
Integrating from $0$ to $t$, we get 
\begin{align}
	\norm{h}_{{A}^0_{\nu^\ast}} &\leq 	\norm{h_0}_{{A}^0_{\nu^\ast}} + c(\nu^\ast,\bar \rho, \norm{h_0}_{A^1_{\nu_0}}) \tnorm{h}^2_{\nu^\ast}  \int_0^t e^{- \sqrt{\bar \rho 4\nu^\ast s}} ds \nonumber \\
	& \leq \norm{h_0}_{{A}^0_{\nu^\ast}} + c(\nu^\ast,\bar \rho, \norm{h_0}_{A^1_{\nu_0}}) \tnorm{h}^2_{\nu^\ast} . \label{analyticityestimate}
\end{align}

Joining the estimates \eqref{decayestimate} and \eqref{analyticityestimate}, we find 
\begin{equation*} 
	\tnorm{h}_{\nu^\ast}  \leq \norm{h_0}_{{A}^0_{\nu^\ast}} + c(\nu^\ast,\bar \rho, \norm{h_0}_{A^1_{\nu_0}}) \tnorm{h}^2_{\nu^\ast} .
\end{equation*}

Assuming the condition
\begin{equation*}
	\norm{h_0}_{A^0_{\nu^\ast}} < \varepsilon,
\end{equation*}
for $\varepsilon >0$ small enough, we can reproduce the argument in the proof of Theorem \ref{stabilitythm} and get 

\begin{equation}
	\tnorm{h}_{\nu^\ast}  = \sup_{t \in [0,T]} \left( e^{\sqrt{\bar \rho \nu^\ast t }} \norm{h}_{{A}^0} + \norm{h}_{ {A}^0_{\nu^\ast}  }  \right) \leq C \norm{h_0}_{A^0_{\nu^\ast}}
\end{equation}
for every $T>0$. This gives us the global existence of analytic interfaces in $A^0_{\nu^\ast}$ and the exponential decay of the $A^0$ norm in the stable regime.

\end{proof}

\section{Exponential growth of solutions in the RT unstable case}

In this section, we prove that in the unstable regime of the densities, where the denser fluid lies above the less dense, smooth solutions grow exponentially in certain Wiener norms. We collect this result in the following theorem:

\begin{theo}[Exponential growth of solutions in the RT unstable case for small data] \label{exponentialgrowth}
	Let $T>0$ be an arbitrary fixed parameter. Then, it exists a family of smooth initial data 
	$$ g_0 \in A^0_{\nu\ast} (\TT)  $$
	such that 
	$$ g \in C([0,T]; A^0_{\nu\ast} (\TT) )  $$
	is a solution of \eqref{eq:grafo2} in  the RT unstable regime,
	$$
	\rho^--\rho^+<0,
	$$
	and 
	\begin{equation} 
		\norm{g(\tau)}_{A^0_{\nu^\ast}}  \ge \frac{1}{C}	e^{\sqrt{(\seminorm{\rho^--\rho^+}/4) \nu^\ast \tau}}	\norm{g_0}_{{A}^0} \quad \tau \in [0,T] .
	\end{equation}
\end{theo}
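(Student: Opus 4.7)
The key observation is that equation \eqref{eq:grafo2} is invariant under the combined transformation $t\mapsto -t$ and $(\rho^--\rho^+)\mapsto -(\rho^--\rho^+)$, because every term on its right hand side carries exactly one factor of $(\rho^--\rho^+)$. Therefore a solution in the RT stable regime, reversed in time, becomes a solution in the RT unstable regime. The plan is to construct the desired unstable solution as the time reversal of a suitably decaying stable solution supplied by Theorem \ref{exponentialdecay}, and then to convert the exponential decay of the stable solution into the exponential growth claimed in the statement.

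Concretely, I would fix $\bar\rho := (\rho^--\rho^+)/4 > 0$ (stable sign), $\nu_0 > 0$ and $0 < \nu^\ast \leq \nu_0/24$, pick any smooth $h_0 \in A^1_{\nu_0}(\TT)$ satisfying the smallness hypotheses of Theorems \ref{globalwiener}--\ref{exponentialdecay}, and let $h \in C([0,T]; A^0_{\nu^\ast}(\TT))$ be the resulting global analytic stable solution of \eqref{eq:grafo2}. Then define $g_0 := h(\cdot,T)$ and $g(\alpha,\tau) := h(\alpha, T-\tau)$. A direct calculation together with the linearity of the right hand side of \eqref{eq:grafo2} in $(\rho^--\rho^+)$ shows that $g$ solves \eqref{eq:grafo2} on $[0,T]$ in the RT unstable regime with initial datum $g_0$; varying $h_0$ produces the advertised family of unstable data.

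To transfer decay into growth, I would note that, by autonomy of \eqref{eq:grafo2} and uniqueness of the stable Cauchy problem, for every $\tau \in [0,T]$ the restriction $h|_{[T-\tau,T]}$ coincides with the unique stable solution on $[0,\tau]$ issued from $h(T-\tau)$. The uniform bounds $\norm{h(t)}_{A^0_{\nu^\ast}} \leq C\norm{h_0}_{A^0_{\nu^\ast}}$ and $\norm{h(t)}_{A^1_{\nu(t)}} \leq \norm{h_0}_{A^1_{\nu_0}}$ established in Theorems \ref{globalwiener}--\ref{exponentialdecay} ensure that $h(T-\tau)$ still meets the smallness hypotheses of Theorem \ref{exponentialdecay}, provided $h_0$ is chosen slightly smaller to absorb the constant $C$. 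Applying the decay estimate of Theorem \ref{exponentialdecay} with initial datum $h(T-\tau)$ on $[0,\tau]$ and identifying the restarted solution at time $\tau$ with $h(T)$ via uniqueness yields
\begin{equation*}
e^{\sqrt{\bar\rho\,\nu^\ast\,\tau}}\,\norm{h(T)}_{A^0} \leq C\,\norm{h(T-\tau)}_{A^0_{\nu^\ast}}.
\end{equation*}
Rewriting this in terms of $g_0 = h(T)$ and $g(\tau) = h(T-\tau)$ and using $\bar\rho = |\rho^--\rho^+|/4$ produces precisely
\begin{equation*}
\norm{g(\tau)}_{A^0_{\nu^\ast}} \geq \frac{1}{C}\, e^{\sqrt{|\rho^--\rho^+|\nu^\ast\tau/4}}\,\norm{g_0}_{A^0}, \qquad \tau \in [0,T].
\end{equation*}

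The main (and essentially only) obstacle is this Markov/restart step: one must verify that the smallness thresholds of Theorem \ref{exponentialdecay} are not violated when $h(T-\tau)$ is used as a new initial datum. This is bookkeeping that follows directly from the a priori bounds already contained in the proofs of Theorems \ref{globalwiener}--\ref{exponentialdecay}, after shrinking the constants $\delta$ and $\varepsilon$ in the statement of Theorem \ref{exponentialgrowth} by the appropriate universal factor. Once this is in place, the whole argument reduces to the time-reversal symmetry of \eqref{eq:grafo2} together with the exponential decay of stable analytic solutions, so no genuinely new estimates are needed.
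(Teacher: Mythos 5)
Your proposal uses exactly the same mechanism as the paper: the right-hand side of \eqref{eq:grafo2} is linear in $\rho^--\rho^+$, so the combined reversal $t\mapsto -t$, $\bar\rho\mapsto-\bar\rho$ maps stable solutions to unstable ones, and the exponential decay of Theorem \ref{exponentialdecay} is read backwards as exponential growth. The definitions $g_0=h(\cdot,T)$, $g(\alpha,\tau)=h(\alpha,T-\tau)$ and the verification that $g$ solves the unstable equation coincide with the paper's argument.

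The one place where your proposal does extra work is the restart step. The paper does not restart: it produces, for each $\tau\in[0,T]$, a reversed solution $g$ defined on $[0,\tau]$ with a $\tau$-dependent initial datum $g_0=h(\cdot,\tau)$, and then applies \eqref{stabexp} only at the terminal time $t=\tau$, so no Markov property is invoked. You instead fix a single $g$ on $[0,T]$ and try to obtain the growth bound simultaneously for every $\tau$, which does require regarding $h(T-\tau)$ as a fresh initial datum and rerunning Theorem \ref{exponentialdecay}. That is a stronger conclusion than the paper's, but you should not wave it through as pure bookkeeping: Theorem \ref{exponentialdecay} needs the initial data to lie in $A^1_{\nu_0}$ with $\nu^\ast\leq\nu_0/24$, whereas the analyticity band of $h$ degrades in time with only the guarantee $\nu(t)\geq\nu_0/2$. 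Restarting at $t=T-\tau$ therefore needs $\nu^\ast\leq\nu(T-\tau)/24$, which the original choice $\nu^\ast\leq\nu_0/24$ does not ensure; you would have to tighten to something like $\nu^\ast\leq\nu_0/48$ and rephrase the smallness thresholds accordingly. With that adjustment your argument goes through; without it, the restart step has a genuine gap that the paper simply sidesteps by proving the weaker version.
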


\begin{proof}
	
Let $T>0$ arbitrary. By Theorem \ref{exponentialdecay}, there exists a solution for every $\tau \in [0,T]$
	$$h \in C([0,\tau];{A}^0_{\nu^\ast})$$
 in the RT stable case starting from a small enough initial data $h_0$,
		$$ \norm{h_0}_{A^0_{\nu^\ast}} < \varepsilon.$$
Setting a family of different initial data $h_0$, we can find a family of different solutions $h(\alpha,t)$. Furthermore, each solution satisfies 
\begin{equation} \label{stabexp}
e^{\sqrt{\seminorm{\bar \rho} \nu^\ast t}}	\norm{h(t)}_{{A}^0} \leq C \norm{h_0}_{A^0_{\nu^\ast}}  \quad t \in [0,\tau] 
\end{equation}
where $C >0$.

Now, define the function
\begin{equation*}
	g(\alpha,t) = h(\alpha,\tau-t).
\end{equation*}
It is clear from the definition that 
	$$g \in C([0,\tau];{A}^0_{\nu^\ast}).$$
Moreover, we find that 
\begin{align*}
	g_t(\alpha,t) &= -(h_t)(\alpha,\tau-t) \\ &= |\bar \rho| \Lambda^{-1} (g)(\alpha,t) - \frac{|\bar \rho| }{2 \pi} \left( I_1(g)(\alpha,t) + I_2(g)(\alpha,t) + I_3(g)(\alpha,t) + I_4(g)(\alpha,t)   \right)	,
\end{align*}
i.e., $g(\alpha,t)$ solves \eqref{eq:grafo2} (equivalently \eqref{isolatelinear}) in the unstable RT scenario. Note that 
$$g_0(\alpha) = h(\alpha,\tau) \text{ and } g(\alpha,\tau) = h_0(\alpha).$$
Then, by \eqref{stabexp}, it holds that 
\begin{equation}\label{expgrowth} 
\norm{g(\tau)}_{A^0_{\nu^\ast}}  \ge \frac{1}{C}	e^{\sqrt{\seminorm{\bar \rho} \nu^\ast \tau}}	\norm{g_0}_{{A}^0}  \quad \tau \in [0,T].
\end{equation}
The previous equation shows exponential growth of smooth analytic solutions of the unstable RT scenario for small initial data, in a time interval which is arbitrarily large.

\end{proof}

\subsection*{Acknowledgments.} 		
F. Gancedo and E. Salguero were partially supported by the ERC through the Starting Grant H2020-EU.1.1.-639227, by the MICINN (Spain) through the grants EUR2020-112271 and PID2020-114703GB-I00 and by the Junta de Andalucía through the grant P20-00566. F. Gancedo was partially supported by MINECO grant RED2018-102650-T (Spain). F. Gancedo  acknowledges support
from IMAG, funded by MICINN through the Maria de Maeztu
Excellence Grant CEX2020-001105-M/AEI/10.13039/501100011033. R. Granero-Belinchón was supported by the project ``Mathematical Analysis of Fluids and Applications" Grant PID2019-109348GA-I00 funded by MCIN/AEI/ 10.13039/501100011033 and acronym "MAFyA". This publication is part of the project PID2019-109348GA-I00/AEI/10.13039/501100011033. R. Granero-Belinchón is also supported by a 2021 Leo\-nar\-do Grant for Researchers and Cultural Creators, BBVA Foundation. The BBVA Foundation accepts no responsibility for the opinions, statements, and contents included in the project and/or the results thereof, which are entirely the responsibility of the authors. E. Salguero was partially supported by the grant PRE2018-083984, funded by MCIN/AEI/ 10.13039/501100011033.

\bibliographystyle{abbrv}

\end{document}